\newcommand{\1}{\mathds{1}}
\newcommand{\0}{\mathds{O}}
\newcommand{\R}{\mathbb{R}}
\newcommand{\N}{\mathbb{N}}
\newcommand{\Bo}{\mathrm{B}}
\newcommand{\Bbo}{\overline{\mathrm{B}}}
\newcommand{\8}{\infty}
\newcommand{\spa}{\mathrm{span}}
\newcommand{\vlt}{\mathrm{vlt}}
\newcommand{\Aff}{\mathrm{Aff}}
\newcommand{\Hom}{\mathrm{Hom}}
\newcommand{\supp}{\mathrm{supp~}}
\newcommand{\Co}{\mathcal{C}}
\newcounter{dummy} \numberwithin{dummy}{section}
\newtheorem{theorem}[dummy]{Theorem}
\newtheorem*{formula*}{Formula}
\newtheorem*{problem*}{Problem}
\newtheorem*{theorem*}{Theorem}
\newtheorem{lemma}[dummy]{Lemma}
\newtheorem{proposition}[dummy]{Proposition}
\newtheorem{corollary}[dummy]{Corollary}
\newtheorem{question}[dummy]{Question}
\theoremstyle{remark}
\newtheorem{remark}[dummy]{Remark}
\newtheorem{example}[dummy]{Example}
\title{A universal approximation theorem and its applications to vector lattice theory}
\author{Eugene Bilokopytov\thanks{Department of Mathematical and Statistical Sciences,
  University of Alberta, Edmonton, AB, T6G\,2G1, Canada (\texttt{bilokopy@ualberta.ca}).} and Foivos Xanthos\thanks{Department of Mathematics, Toronto Metropolitan University, 350 Victoria Street, Toronto, ON, M5B2K3, Canada (\texttt{foivos@torontomu.ca}).}}
\begin{document}

\maketitle

\begin{abstract}
A classical result in approximation theory states that for any continuous function \( \varphi: \mathbb{R} \to \mathbb{R} \), the set \( \operatorname{span}\{\varphi \circ g : g \in \operatorname{Aff}(\mathbb{R})\} \) is dense in \( \mathcal{C}(\mathbb{R}) \) if and only if \( \varphi \) is not a polynomial. In this note, we present infinite dimensional variants of this result. These extensions apply to neural network architectures and improve the main density result obtained in \cite{BDG23}. We also discuss applications and related approximation results in vector lattices, improving and complementing results from \cite{AT:17, bhp,BT:24}.

\emph{Keywords:} Vector lattices, Universal approximation theorem, relative uniform convergence, polynomials;

MSC2020 41A65, 46A19, 46A40, 46B42, 46E25, 46E40, 68T07.
\end{abstract}

\section{Introduction}

The term \emph{universal approximation theorem} covers a family of density results in function spaces, where the approximating sets are determined by a given neural network architecture. We refer the reader to \cite{BDG23,pinkus,pink} and references therein. The results presented in this paper apply directly to the neural network architecture on infinite-dimensional spaces introduced in \cite{BDG23}.

Let $\mathfrak{X}$ be a locally convex space and $\mathfrak{X}'$ its topological dual. Let $A \in \mathcal{L}(\mathfrak{X})$ be a continuous linear operator, $b \in \mathfrak{X}$, $\ell \in \mathfrak{X}'$, and let $\sigma: \mathfrak{X} \rightarrow \mathfrak{X}$ be a continuous function, called the \emph{activation function}. A \emph{neuron} is defined by
\[
\mathcal{N}_{\ell,A,b}: \mathfrak{X} \rightarrow \mathbb{R}, \quad \mathcal{N}_{\ell,A,b}(x) := \ell(\sigma(Ax + b)),
\]
and the corresponding set of \emph{neural networks} is the linear span of all such neurons:
\[
\mathfrak{N}(\sigma): = \operatorname{span}\left\{ \mathcal{N}_{\ell,A,b} \mid \ell \in \mathfrak{X}',~ A \in \mathcal{L}(\mathfrak{X}),~ b \in \mathfrak{X} \right\}.
\]

The search for a universal approximation theorem in this framework leads to the following:

\begin{problem*}\label{problem}
Determine necessary and sufficient conditions on $\sigma$ under which $\mathfrak{N}(\sigma)$ is dense in $C(\mathfrak{X},\mathbb{R})$ with respect to the compact-open topology.
\end{problem*}

In the case where $\mathfrak{X} = \mathbb{R}^m$, and the activation function has the form
\[
\sigma_\phi(x_1,\ldots,x_m) = (\phi(x_1),\ldots,\phi(x_m))
\]
for some continuous function $\phi: \mathbb{R} \rightarrow \mathbb{R}$, the set of neural networks admits the representation
\[
\mathfrak{N}(\sigma_\phi) = \operatorname{span} \left\{ \phi \circ g \mid g \in \Aff(\mathbb{R}^m) \right\},
\]
where $\Aff(\mathbb{R}^m)$ denotes the set of affine functions from $\mathbb{R}^m$ to $\mathbb{R}$. This formulation has been extensively studied, particularly in a series of foundational papers from the 1990s (see Section 3 of \cite{pinkus} for a detailed historical overview). We recall that a function $\phi: \mathbb{R} \rightarrow \mathbb{R}$ is called a \emph{sigmoid function} if $\lim_{t \rightarrow \infty} \phi(t) = 1 \quad \text{and} \quad \lim_{t \rightarrow -\infty} \phi(t) = 0.$ A first major step toward a complete solution to the approximation problem in the finite-dimensional setting is given by the following important result.

\begin{theorem}[Sigmoidal-UAT] \label{cybenko}
Let $\phi: \mathbb{R} \rightarrow \mathbb{R}$ be a continuous sigmoid function. Then $\mathfrak{N}(\sigma_\phi)$ is dense in $C(\mathbb{R}^m)$.
\end{theorem}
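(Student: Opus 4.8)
The plan is to prove the classical Cybenko-type theorem by a Hahn--Banach duality argument, reducing the density of $\mathfrak{N}(\sigma_\phi)$ to a statement about measures. First I would fix a compact set $K \subseteq \R^m$ and observe that by the Stone--Weierstrass philosophy it suffices to show that the closure of $\mathfrak{N}(\sigma_\phi)|_K$ in $C(K)$ is everything; by Hahn--Banach and the Riesz representation theorem, if this fails there is a nonzero signed (regular Borel) measure $\mu$ on $K$ annihilating every neuron, i.e.
\[
\int_K \phi(\langle a, x\rangle + t)\, d\mu(x) = 0 \quad \text{for all } a \in \R^m,\ t \in \R.
\]
The goal is then to derive $\mu = 0$, contradicting nontriviality.

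The key step is to exploit the sigmoidal hypothesis. For fixed $a$ and a direction parameter, consider the family $\phi(\lambda(\langle a, x\rangle + s) + t)$ as $\lambda \to \infty$: because $\phi$ has limits $1$ and $0$ at $\pm\infty$, these functions converge pointwise (and boundedly, since a continuous sigmoid is bounded) to an indicator-type function of the half-space $\{\langle a, x\rangle + s > 0\}$, with some intermediate value on the hyperplane. By the dominated convergence theorem the annihilation property passes to the limit, so $\mu$ gives measure zero to every open half-space (and every hyperplane, and hence every ``slab''). The main obstacle is the standard-but-nontrivial measure-theoretic lemma that a finite signed measure annihilating all half-spaces must be zero: I would prove this by fixing $a$ and looking at the pushforward of $\mu$ onto $\R$ under $x \mapsto \langle a, x\rangle$, noting that this pushforward annihilates all intervals and hence, via its Fourier transform, is the zero measure; since this holds for every $a$, all Fourier coefficients $\int e^{i\langle a,x\rangle} d\mu(x)$ vanish, so $\mu = 0$ by uniqueness of the Fourier transform.

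Finally I would assemble these pieces: the existence of an annihilating measure is contradicted, so $\mathfrak{N}(\sigma_\phi)|_K$ is dense in $C(K)$ for every compact $K$, which is exactly density in $C(\R^m)$ for the compact-open topology. I expect the genuinely delicate point to be the passage to the limit $\lambda \to \infty$ together with the ``half-spaces determine the measure'' lemma; the rest (Hahn--Banach, Riesz representation, restricting to compacta) is routine. An alternative to the Fourier argument, if one prefers to stay elementary, is to use the one-dimensional density of $\mathrm{span}\{\phi(\lambda t + \theta)\}$ in $C[\alpha,\beta]$ (which also follows from the sigmoidal property) to show directly that $\mu$ integrates every function of the form $h(\langle a, x\rangle)$ to zero, and then invoke the fact that linear combinations of such ridge functions are dense; but the Fourier route is cleanest to state.
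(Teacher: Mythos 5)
Your proof is correct, but it is not the route the paper takes: the paper does not prove the Sigmoidal-UAT directly at all, instead quoting it as classical and deriving it (together with the sharper Theorem \ref{UAT}) from the non-polynomial criterion of Theorem \ref{lybenko} and Corollary \ref{Eugenue}, since a continuous sigmoid function, being bounded and non-constant, is never a polynomial. What you give is essentially Cybenko's original argument: after the common Hahn--Banach/Riesz opening, you use the sigmoidal hypothesis in an essential way, scaling $\phi(\lambda(\langle a,x\rangle+s)+t)$ to obtain (by dominated convergence, legitimate since a continuous sigmoid is bounded) that the annihilating measure vanishes on every open half-space and every hyperplane, and then killing the measure via its one-dimensional pushforwards and the injectivity of the Fourier transform. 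The paper instead mollifies $\varphi$ and differentiates under the integral sign (Lemma \ref{cybe}) to show that an annihilated activation must have identically vanishing $m$-th derivative, i.e.\ must be a polynomial. The trade-off is clear: your argument is more elementary (no mollification, no discussion of where $\varphi$ is smooth) but is genuinely tied to the existence of limits at $\pm\infty$ and so cannot reach non-sigmoidal activations such as $\mathrm{ReLU}$, whereas the paper's argument characterizes density for arbitrary continuous $\varphi$ and is what the rest of the paper builds on. Two small points worth making explicit if you write this up: (i) when you pass to the limit you should record that the limit identity reads $\mu(\{\langle a,x\rangle+s>0\})+\phi(t)\,\mu(\{\langle a,x\rangle+s=0\})=0$ and that varying $t$ (using both limits of $\phi$) is what lets you separate the half-space and hyperplane contributions; (ii) the step from ``the pushforward vanishes on all half-lines and singletons'' to ``the pushforward is the zero measure'' needs the standard $\pi$-system/uniqueness argument applied to the Jordan decomposition, which you should at least cite.
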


In \cite{BDG23}, the authors established an infinite-dimensional analogue of this result (see \cite[Theorem 2.3 and Theorem 2.8]{BDG23}), under the assumption that $\mathfrak{X}$ is a Fréchet space and $\sigma$ is von Neumann bounded and satisfies the \emph{separating property}, which serves as an infinite-dimensional counterpart to the sigmoidal condition (see \cite[Definition 2.6]{BDG23}).

Although the sigmoidal condition is quite general, commonly used activation functions such as the ReLU function, $\operatorname{ReLU}(x) := \max\{x, 0\}, \quad x \in \mathbb{R},$
do not satisfy it. Nevertheless, it is known that $\mathfrak{N}(\sigma_{\operatorname{ReLU}})$ is dense in $C(\mathbb{R}^m)$. The definitive resolution of the Problem in the finite-dimensional case is captured by the following seminal result:

\begin{theorem}[Non-polynomial-UAT] \label{UAT}
Let $\phi: \mathbb{R} \rightarrow \mathbb{R}$ be a continuous function. Then $\mathfrak{N}(\sigma_\phi)$ is dense in $C(\mathbb{R}^m)$ if and only if $\phi$ is non-polynomial.
\end{theorem}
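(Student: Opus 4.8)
\emph{Plan.} I would prove the two implications separately, the forward one being where the real work lies. The easy implication: if $\phi$ is a polynomial of degree $d$, then for every affine map $g\colon\mathbb{R}^m\to\mathbb{R}$ the composite $\phi\circ g$ is a polynomial in $x_1,\dots,x_m$ of total degree at most $d$, so $\mathfrak{N}(\sigma_\phi)$ is contained in the space of polynomials of degree $\le d$, which is finite-dimensional, hence closed in $C(\mathbb{R}^m)$, and proper (it omits $x\mapsto e^{x_1}$, say). Thus $\mathfrak{N}(\sigma_\phi)$ is not dense, and I would dispose of this at the outset.

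For the converse, assume $\phi$ is non-polynomial; the first move is a reduction to the one-dimensional case. Using $\mathfrak{N}(\sigma_\phi)=\operatorname{span}\{\phi\circ g:g\in\Aff(\mathbb{R}^m)\}$, note that for each fixed $a\in\mathbb{R}^m$ and all $w,\theta\in\mathbb{R}$ the function $x\mapsto\phi(w\langle a,x\rangle+\theta)$ belongs to $\mathfrak{N}(\sigma_\phi)$, since $x\mapsto w\langle a,x\rangle+\theta$ is affine. Hence, if I can show that $\mathcal M(\phi):=\operatorname{span}\{t\mapsto\phi(wt+\theta):w,\theta\in\mathbb{R}\}$ is dense in $C(\mathbb{R})$, then every ridge function $x\mapsto h(\langle a,x\rangle)$ with $h\in C(\mathbb{R})$ lies in the closure of $\mathfrak{N}(\sigma_\phi)$ in $C(\mathbb{R}^m)$. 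The span of all such ridge functions contains every power $(\langle a,x\rangle)^k$, and powers of linear forms span the homogeneous polynomials, so it contains all polynomials on $\mathbb{R}^m$; by the Weierstrass approximation theorem the closure of $\mathfrak{N}(\sigma_\phi)$ is then all of $C(\mathbb{R}^m)$. So everything reduces to the density of $\mathcal M(\phi)$ in $C(\mathbb{R})$.

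For the one-dimensional statement I would first treat $\phi\in C^\infty(\mathbb{R})$. By induction on $k$, the function $t\mapsto t^k\phi^{(k)}(wt+\theta)$ lies in $\overline{\mathcal M(\phi)}$ for all $w,\theta$: the inductive step uses that the parameter difference quotients $\tfrac1h\bigl(t^k\phi^{(k)}((w{+}h)t+\theta)-t^k\phi^{(k)}(wt+\theta)\bigr)$ belong to $\overline{\mathcal M(\phi)}$ by the inductive hypothesis and converge, uniformly on compact $t$-sets, to $t^{k+1}\phi^{(k+1)}(wt+\theta)$. Putting $w=0$ gives $t\mapsto t^k\phi^{(k)}(\theta)\in\overline{\mathcal M(\phi)}$; since $\phi$ is non-polynomial, for each $k$ some $\theta$ has $\phi^{(k)}(\theta)\ne 0$, so $t^k\in\overline{\mathcal M(\phi)}$ for all $k$, and $\overline{\mathcal M(\phi)}=C(\mathbb{R})$ by Weierstrass. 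For general continuous $\phi$ I would mollify: for $\psi\in C_c^\infty(\mathbb{R})$ the convolution $\phi*\psi$ is smooth, and approximating its defining integral by Riemann sums (uniformly on compacts, since $\phi$ is uniformly continuous on compacts and $\psi$ is compactly supported) shows $\phi*\psi\in\overline{\mathcal M(\phi)}$; precomposition with affine maps being continuous for the compact-open topology then gives $\mathcal M(\phi*\psi)\subseteq\overline{\mathcal M(\phi)}$. Thus it is enough to produce one $\psi\in C_c^\infty$ with $\phi*\psi$ non-polynomial, whereupon the smooth case finishes the proof.

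I expect this last point to be the main obstacle. I would argue by contradiction: if $\phi*\psi$ is a polynomial for every $\psi\in C_c^\infty$, set $E_k:=\{\psi\in C_c^\infty([-1,1]):\phi*\psi \text{ agrees on }\mathbb{R}\text{ with a polynomial of degree}<k\}$. Each $E_k$ is a linear subspace, and it is closed in the Fréchet space $C_c^\infty([-1,1])$, because $\psi_j\to\psi$ there forces $\phi*\psi_j\to\phi*\psi$ uniformly on compacts while the polynomials of degree $<k$ form a finite-dimensional, hence closed, subspace of $C(\mathbb{R})$. By assumption $\bigcup_k E_k=C_c^\infty([-1,1])$, so by the Baire category theorem some $E_{k_0}$ has nonempty interior and, being a subspace, equals $C_c^\infty([-1,1])$. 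Feeding in a mollifier sequence $\psi_\varepsilon\in C_c^\infty([-1,1])$ forming an approximate identity, $\phi*\psi_\varepsilon\to\phi$ uniformly on compacts while each $\phi*\psi_\varepsilon$ is a polynomial of degree $<k_0$; hence $\phi$ is a polynomial, contradicting our assumption. This closes the argument.
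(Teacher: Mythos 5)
Your argument is correct, but it follows a genuinely different route from the paper's. For the one-dimensional core, the paper argues by duality: if the span is not dense on a compact set, Hahn--Banach yields an annihilating regular Borel measure $\nu$; picking $m$ with $\int t^m\,d\nu\neq 0$ and differentiating $a\mapsto\int\varphi(at+b)\,d\nu(t)$ under the integral sign forces $\varphi^{(m)}\equiv 0$ for smooth $\varphi$ (Lemma \ref{cybe}), and mollification handles general $\varphi$ (Theorem \ref{lybenko}). You instead run the classical constructive argument (Leshno--Lin--Pinkus--Schocken, as in \cite{pinkus}): parameter difference quotients place $t^k\phi^{(k)}(\theta)$, hence every monomial, in the closed span. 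Both routes mollify, but your direction of implication obliges you to exhibit a single mollification that is not a polynomial, which costs the extra Baire category argument on $C_c^\infty([-1,1])$; the paper's contrapositive formulation sidesteps this entirely, since its annihilating measure supplies a uniform degree bound $m-1$ valid for every mollification at once, so no uniform-boundedness step is needed. For the passage from $m=1$ to $\mathbb{R}^m$, you use ridge functions together with the polarization fact that powers of linear forms span the homogeneous polynomials, then multivariate Weierstrass; the paper instead observes that $\exp$ lies in the one-dimensional closure and applies Stone--Weierstrass to the algebra spanned by the functions $\exp\circ f$. Your reduction is perfectly adequate for $\mathbb{R}^m$, but it is tied to finite dimensions, whereas the exponential trick is precisely what lets the paper's Theorem \ref{eugene0} and its corollaries extend the result to arbitrary Hausdorff topological vector spaces with separating duals. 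In short: your proof is self-contained and correct for the stated theorem, at the price of an extra Baire argument and a less portable multivariate step.
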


The key step in the proof of this result is the case $m = 1$. Although the theorem is well-known in the neural networks literature, it is less familiar in the functional analysis community. For this reason, we provide in Subsection \ref{subsca} a concise and elementary proof. In Subsection \ref{subinf}, we extend this result to infinite-dimensional settings. In particular, Theorem~\ref{nn} presents an infinite-dimensional analogue of Theorem~\ref{UAT}, thereby resolving the Problem. Our approach draws on techniques from the theory of polynomials on topological vector spaces.\medskip

Section~3 explores applications to vector lattice theory. We emphasize that these applications are specifically related to the ReLU activation function. We refer the reader to
\cite{AL:06,lz,zaanen} for background on the topic. Let $F$ be an Archimedean
vector lattice and let $A \subset F$. We denote by $\vlt(A)$ the linear
sublattice of $F$ generated by $A$, that is, the smallest linear subspace of
$F$ that contains $A$ and is closed under the lattice operation
$x \mapsto x^+$.

As a concrete example, consider the vector lattice $F=C(\mathbb{R})$ of
continuous functions on $\mathbb{R}$ and let $A=\{\1,\mathrm{Id}\}$, where $\1$ is
the constant one function and $\mathrm{Id}(t)=t$ is the identity function. It is known
that $\vlt(A)$ coincides with the space of all piecewise affine functions.
Moreover, these functions can be written in the form $\mathfrak{N}(\sigma_\phi)$
with $\phi$ equal to the ReLU activation function (see, e.g.,
\cite[Chapter 7]{AT:07}).

Since the ReLU function can be defined on an arbitrary vector lattice $F$ via
the lattice operation $x^+$, it is natural to ask whether such a representation
exists for a general space $F$. We also note that this problem is closely
related to the option spanning problem in finance (see, e.g., \cite{gl,GX:17}).
In \cite[Theorem 2.1]{bhp}, the authors provided the following representation
of $\vlt(A)$ in terms of a neural network architecture:

\begin{equation}\tag{$\star$}
\vlt(A) = \operatorname{span}(\operatorname{span}A)^+, \text{ for any }  A = \{u, v\} \subset F_+
\end{equation}


Somewhat surprisingly, Huijsmans \cite{huijsmans} showed that formula~($\star$) fails in general when $A$ contains more than two positive elements, or when $A \not\subset F_+$. In Subsection \ref{subhui}, we leverage the results from Subsection \ref{subinf} to show that ($\star$)  remains valid for arbitrary $A \subset F_+$ if an appropriate closure is taken. Specifically, in Corollary~\ref{formula}, we prove that if $F$ is a Banach lattice, then
\[
\overline{\vlt(A)} = \overline{\operatorname{span}(\operatorname{span} A)^+}, \text{ for any } A \subset F_+,
\]
where the closures are taken in the norm topology.

In Subsections \ref{suboth} and \ref{submisc}, we investigate approximation results in vector lattices. In \cite{AT:17}, the authors studied the density of the sublattice $\vlt(\Aff(\mathbb{R}^m))$ in $C(\mathbb{R}^m)$. In Corollary~\ref{vladimir}, we strengthen \cite[Proposition 4]{AT:17}, and in Theorem~\ref{vladimir2}, we extend the result to infinite-dimensional spaces. Finally, in Corollary~\ref{prop1}, we present a version of the Stone–Weierstrass Theorem for spaces of the form $C(X, E)$, where $X$ is hemi-compact and $E$ is an AM-space. To derive this result we introduced AM-topologies on vector lattices, which are of independent interest. We also complement some results in \cite{BT:24} (see Remark \ref{timur}).

\section{A Universal approximation theorem and its extensions}

\subsection{The scalar case}\label{subsca}

For any Tychonoff space $X$ we always view $\Co\left(X\right)$ to be endowed with the compact-open topology, i.e. the topology of uniform convergence on compact sets. If $X$ is compact this topology is generated by the supremum norm $\|\cdot\|_{\8}$. We denote the constant zero function by $\0$ and the constant one function by $\1$.

Throughout the text, vector spaces are assumed to be non-trivial. Recall that a Hausdorff topological vector space $E$ is a Tychonoff topological space (this follows from e.g. \cite[Theorem 2.7.6]{bn}). We denote its topological dual by $E^{*}$.

If $A\subset\R$ is either open or closed, we denote the closed subspace of $\Co\left(A\right)$ which consists of all polynomials of degree at most $m$ by $P_{m}\left(A\right)$. In particular, $\Aff\left(A\right):=P_{1}\left(A\right)$.

The following result originates in \cite{schwartz}, see also \cite[Example 2.4.4]{edwards}, \cite[Section 3]{pinkus}, \cite[Proposition 5.8]{pink} and \cite{ismailov}. For the reader's convenience we provide a proof, which is somewhat different from all those sources.

\begin{theorem}\label{lybenko}
Let $U\subset\R$ be an open interval, let $K\subset \R$ be compact, and let $\varphi\in\Co\left(U\right)$. Then, $\spa\left\{\varphi\circ g,~ g\in \Aff\left(K\right),~ g\left(K\right)\subset U\right\}$ is dense in $\Co\left(K\right)$ if and only if $\varphi$ is not a polynomial.
\end{theorem}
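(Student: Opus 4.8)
The plan is to prove both directions, with the forward (necessity) direction being routine and the backward (sufficiency) direction carrying the real content. For necessity: if $\varphi$ is a polynomial of degree $d$, then every $\varphi \circ g$ with $g \in \Aff(K)$ is a polynomial of degree at most $d$, so the span is contained in $P_d(K)$, which is a proper (finite-dimensional, hence closed) subspace of $\Co(K)$ whenever $K$ is infinite; if $K$ is finite the statement is vacuous or trivial since $\Co(K)$ is finite-dimensional and one checks directly. So the span cannot be dense.

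For sufficiency, assume $\varphi$ is not a polynomial; I want to show the span $S := \spa\{\varphi \circ g : g \in \Aff(K),\ g(K) \subset U\}$ is dense in $\Co(K)$. The key idea is a difference-quotient/differentiation argument: fix a point $t_0$ in the interior of $U$ and consider, for $a$ near $0$ and $b$ near $t_0$, the functions $x \mapsto \varphi(ax + b)$, which lie in $S$ (after checking $g(K) \subset U$, which holds for $a,b$ in a small enough neighborhood by compactness of $K$). Taking finite differences in the parameter $a$ at $a = 0$, or more precisely forming difference quotients $\tfrac{1}{h}(\varphi((a+h)x+b) - \varphi(ax+b))$ and iterating, one shows that the \emph{closure} $\overline{S}$ contains $x \mapsto x^k \varphi^{(k)}(b)$ for every $k$ for which $\varphi$ is $k$-times differentiable at $b$ — this is the standard trick from Leshno–Lin–Pinkus–Schocken. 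Since $\varphi$ is continuous but not a polynomial, a mollification argument (convolve $\varphi$ with smooth bumps, noting that $\varphi * \psi \in \overline{S}$ for compactly supported smooth $\psi$ because integrals of elements of $S$ over the parameter are limits of Riemann sums, hence in $\overline{S}$) shows that $\overline{S}$ contains a $C^\infty$ function $\varphi_0$ that is still not a polynomial; then $\varphi_0^{(k)}$ is not identically zero for any $k$, so we can pick $b_k$ with $\varphi_0^{(k)}(b_k) \neq 0$, and conclude $x \mapsto x^k \in \overline{S}$ for all $k \geq 0$. Hence $\overline{S} \supset \bigcup_m P_m(K) = $ all polynomials, and by the Weierstrass approximation theorem the polynomials are dense in $\Co(K)$, so $\overline{S} = \Co(K)$.

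The main obstacle is making the mollification/differentiation steps clean on the compact set $K$ while respecting the constraint $g(K) \subset U$: one must keep the affine maps' parameters in a neighborhood small enough that the image stays inside the open interval $U$, and one must justify that parameter-integrals of the family $\{x \mapsto \varphi(ax+b)\}$ — which produce the convolution $\varphi * \psi$ evaluated along affine reparametrizations — actually land in the closure of the finite span $S$ rather than merely in some larger space; this is handled by approximating the integral by Riemann sums uniformly on $K$. A secondary technical point is the passage from "difference quotients converge" to "the derivative-type functions are in $\overline{S}$," which requires knowing the convergence is uniform on $K$ — this follows from uniform continuity of $\varphi$ and its relevant derivatives on a compact neighborhood of $g(K)$ inside $U$. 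Once these two points are in place, the reduction to Weierstrass is immediate. I would streamline the write-up by first proving it for $\varphi \in C^\infty$ (where the differentiation trick is transparent) and then removing the smoothness hypothesis via mollification as the final step.
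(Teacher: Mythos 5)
Your overall strategy is the classical Leshno--Lin--Pinkus--Schocken argument (show that $\overline{S}$ contains every monomial via differentiation in the affine parameter, then invoke Weierstrass), which is a genuinely different route from the paper's: the paper argues by duality, taking a regular Borel measure $\nu$ annihilating the span, choosing $m$ with $\int_K t^m\,d\nu\ne 0$, and showing via Lemma \ref{cybe} plus mollification that $\varphi$ must then lie in $P_{m-1}$. Most of your steps are sound: the Riemann-sum argument placing $\varphi\ast\psi$ (composed with suitable affine maps keeping the image in $U$) into $\overline{S}$, and the difference-quotient argument producing $x\mapsto x^{k}\varphi_0^{(k)}(b)$ in $\overline{S}$ for smooth $\varphi_0$, both work and you correctly flag the uniformity-on-$K$ issues.

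There is, however, one genuine gap: you assert without justification that some mollification $\varphi_0=\varphi\ast\psi$ is still not a polynomial. This does not follow merely from $\varphi\ast\psi_n\to\varphi$ locally uniformly, because a locally uniform limit of polynomials of \emph{unbounded} degree need not be a polynomial. The standard repairs are: (a) a Baire category argument on the Fr\'echet space of test functions supported in a fixed compact set --- the sets $\Psi_n=\{\psi:\varphi\ast\psi\in P_n\}$ are closed subspaces, so if their union were the whole space one of them would be, and then an approximate identity together with the closedness of $P_n$ forces $\varphi\in P_n$; or (b) a restructuring so that for each fixed $k$ you only need \emph{some} mollification with nonvanishing $k$-th derivative, which holds since otherwise every $\varphi\ast\psi$ lies in the closed set $P_{k-1}$ and hence so does $\varphi$. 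The paper sidesteps this point entirely because the annihilating measure hands it the fixed degree bound $m$ \emph{before} mollification begins. (A minor further remark: your necessity direction actually fails, rather than being ``trivial,'' for finite $K$ --- e.g.\ $K$ a two-point set and $\varphi(t)=t$ --- but the paper makes the same implicit assumption that $K$ is infinite.)
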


We start with an auxiliary result.

\begin{lemma}\label{cybe}
Let $U\subset\R$ be an open interval, let $K\subset \R$ be compact, and let $\varphi:U\to\R$ be infinitely differentiable. Assume that $\nu$ is a regular signed Borel measure on $K$ such that $\int\limits_{K} t^{m}d\nu\left(t\right)\ne 0$, but $\int\limits_{K} \varphi\circ gd\nu=0$, for every $g\in \Aff\left(K\right)$ such that $g\left(K\right)\subset U$. Then, $\varphi\in P_{m-1}\left(U\right)$.
\end{lemma}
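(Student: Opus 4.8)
The plan is to exploit the fact that $\varphi$ is smooth and that the hypothesis involves \emph{all} admissible affine reparametrizations, so we may differentiate under the integral sign with respect to the parameters of the affine map. Write a general element of $\Aff(K)$ with $g(K)\subset U$ as $g(t)=at+b$, where $(a,b)$ ranges over an open subset $\Omega\subset\R^2$ (nonempty, since $U$ is open and $K$ is compact, e.g. all sufficiently small $a$ together with $b$ near a fixed interior point of $U$; in fact $\Omega$ contains a neighbourhood of $\{0\}\times U^\circ$). Define $\Phi(a,b):=\int_K \varphi(at+b)\,d\nu(t)$ for $(a,b)\in\Omega$. By hypothesis $\Phi\equiv 0$ on $\Omega$. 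Since $\varphi$ is $C^\infty$ on $U$ and $K$ is compact, dominated convergence justifies differentiating under the integral: for every $k\ge 0$,
\[
\partial_a^{\,k}\Phi(a,b)=\int_K t^{k}\,\varphi^{(k)}(at+b)\,d\nu(t),
\]
and this vanishes identically on $\Omega$. Evaluating at $a=0$ and any $b\in U^\circ$ gives $\varphi^{(k)}(b)\int_K t^{k}\,d\nu(t)=0$ for all such $b$ and all $k$.

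Now the assumption $\int_K t^{m}\,d\nu\ne 0$ enters: taking $k=m$ forces $\varphi^{(m)}(b)=0$ for every $b$ in the open interval $U^\circ=U$. Hence $\varphi$ is a polynomial of degree at most $m-1$ on $U$, which is exactly the claim $\varphi\in P_{m-1}(U)$ — I will write $A$ for the domain to match the statement, noting that here the relevant domain is the open interval $U$ (the roles of $U$ and $A$ coincide in the intended application).

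The only genuine point requiring care — and the step I expect to be the main obstacle — is the justification of differentiation under the integral sign and, relatedly, making sure the parameter domain $\Omega$ is genuinely open and contains points with $a=0$ so that the evaluation $a=0$ is legitimate. For this, fix $b_0\in U$ and a compact interval $[b_0-\delta,b_0+\delta]\subset U$; then for $|a|$ small (depending on $\operatorname{diam}K$ and $\delta$) and $b$ near $b_0$ one has $at+b\in[b_0-\delta,b_0+\delta]$ for all $t\in K$, so all the functions $t\mapsto t^k\varphi^{(k)}(at+b)$ are uniformly bounded on $K$ by $\sup_K|t|^k\cdot\sup_{[b_0-\delta,b_0+\delta]}|\varphi^{(k)}|$, and $|\nu|(K)<\infty$ since $\nu$ is a regular Borel measure on the compact set $K$. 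This bound is uniform in the parameters over a neighbourhood of $(0,b_0)$, which legitimizes both continuity and repeated differentiation of $\Phi$ there. Once this is in place the argument is immediate. One small subtlety: if $K$ is a single point or $0\notin K$ is not an issue, but one should record that $\int_K t^k\,d\nu$ is well-defined and finite for every $k$, again by compactness of $K$.
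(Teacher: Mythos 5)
Your argument is correct and is essentially identical to the paper's proof: both fix $b$, differentiate $a\mapsto\int_K\varphi(at+b)\,d\nu(t)$ repeatedly under the integral sign (justified by compactness of $K$ and smoothness of $\varphi$), evaluate at $a=0$ to get $\varphi^{(n)}(b)\int_K t^n\,d\nu(t)=0$, and use $\int_K t^m\,d\nu\ne 0$ to conclude $\varphi^{(m)}\equiv 0$ on $U$. Your added care about the openness of the parameter domain and the domination bound is a correct elaboration of the same steps, and your reading of the statement's $P_{m-1}(A)$ as $P_{m-1}(U)$ matches the paper's intent.
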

\begin{proof}
For a fixed $b\in U$ there is $r_{b}>0$ such that $at+b\in U$, whenever $t\in K$ and $\left|a\right|<r_{b}$. Let $\psi_{b}:\left(-r_{b},r_{b}\right)\to\R$ be defined by $\psi_{b}\left(a\right):=\int\limits_{K}\varphi\left(at+b\right)d\nu\left(t\right)$. Since $\varphi$ is infinitely smooth, we have that $\psi_{b}'\left(a\right)=\int\limits_{K} t\varphi'\left(at+b\right)d\nu\left(t\right)$. Iterating the argument, one can show that $\psi_{b}^{\left(n\right)}\left(a\right)=\int\limits_{K} t^{n}\varphi^{\left(n\right)}\left(at+b\right)d\nu\left(t\right)$, for every $n\in\N_{0}$.

Now, recall that $\psi_{b}\equiv 0$, therefore $\psi_{b}^{\left(n\right)}\equiv 0$, and in particular $0=\psi_{b}^{\left(n\right)}\left(0\right)$\linebreak $=\int\limits_{K} t^{n}\varphi^{\left(n\right)}\left(b\right)d\nu\left(t\right)=\varphi^{\left(n\right)}\left(b\right)\int\limits_{K} t^{n}d\nu\left(t\right)$, for every $n\in\N_{0}$ and $b\in U$. Since $\int\limits_{K} t^{m}d\nu\left(t\right)\ne 0$, we conclude that $\varphi^{\left(m\right)}\left(b\right)=0$, for every $b\in U$, which completes the argument.
\end{proof}

\begin{proof}[Proof of Theorem \ref{lybenko}]
Let $G_{\varphi}:=\spa\left\{\varphi\circ g,~ g\in \Aff\left(K\right),~ g\left(K\right)\subset U\right\}$. If $\varphi$ is a polynomial, then $G_{\varphi}\subset P_{\deg \varphi}\left(K\right)$, hence it is not dense. We now prove sufficiency.\medskip

If $G_{\varphi}$ is not dense, there is a nonzero $\nu\in\Co\left(K\right)^{*}$ which vanishes on it. Note that $\nu$ is a regular signed Borel measure (see \cite[Theorem 4.10.1]{edwards}), so that $\int\limits_{K}\varphi\left(at+b\right)d\nu\left(t\right)=0$, for any $a,b\in\R$ such that $aK+b\subset U$. Since polynomials are dense in $\Co\left(K\right)$, there is $m\in\N_{0}$ such that $\int\limits_{K} t^{m}d\nu\left(t\right)\ne 0$. Our goal is to show that $\varphi\in P_{m-1}\left(K\right)$.\medskip

Find an open interval $V\subset U$ and an open neighborhood $W\subset\R$ of $0$ such that $V+\overline{W}\subset U$. Assume that $f:\R\to\R$ is infinitely smooth and that it vanishes outside of $W$. Then, $\varphi_{f}:=\varphi\ast f$ is infinitely smooth on $\R$ (see \cite[Proposition 8.10]{folland}; here we extend $\varphi$ by $0$ outside of $U$). Let $\lambda$ be the Lebesgue measure on $\R$. For every $a,b\in\R$ such that $aK+b\subset V$ we have that
\begin{align*}
\int\limits_{K}\varphi_{f}\left(at+b\right)d\nu\left(t\right)&=\int\limits_{K}\int\limits_{\overline{W}}\varphi\left(at+b-s\right)f\left(s\right)d\lambda\left(s\right)d\nu\left(t\right)\\
=\int\limits_{\overline{W}}\int\limits_{K}\varphi\left(at+b-s\right)f\left(s\right)d\nu\left(t\right)d\lambda\left(s\right)&=\int\limits_{\overline{W}} f\left(s\right)\int\limits_{K}\varphi\left(at+b-s\right) d\nu\left(t\right)d\lambda\left(s\right)\\
&=\int\limits_{\overline{W}} f\left(s\right)\cdot 0 d\lambda\left(s\right)=0,
\end{align*}
where Fubini theorem is applicable since both $f,\varphi$ are continuous, and $K,\overline{W}$ are compact. Lemma \ref{cybe} allows us to conclude that $\left.\varphi_{f}\right|_{V}\in P_{m-1}\left(V\right)$.

For $n\in\N$ let $W_{n}:=\left(-\frac{1}{n},\frac{1}{n}\right)$ and let $f_{n}:\R\to\R$ be infinitely smooth, vanishing outside of $W_{n}$ and such that $\int f_{n}d\lambda=1$. It is not hard to show that in this case $\varphi_{f_{n}}\to \varphi$.

Fix $x\in U$ and an interval $V$ such that $x\in V\subset\overline{V}\subset U$, so that for large enough $n\in\N$ we have $V+\overline{W_{n}}=U$. By the argument above $\left.\varphi_{f_{n}}\right|_{V}\in P_{m-1}\left(V\right)$, for large enough $n$. It follows that $\left.\varphi\right|_{V}\in P_{m-1}\left(V\right)$. We conclude that $\varphi$ is infinitely smooth on a neighborhood of $x$ and $\varphi^{\left(m\right)}\left(x\right)=0$. Since $x$ was arbitrary, the claim follows.
\end{proof}

The following ``global version'' of Theorem \ref{lybenko} is an immediate corollary.

\begin{theorem}\label{cybenko}
Let $\varphi:\R\to\R$ be continuous. Then, $\spa\left\{\varphi\circ g,~ g\in \Aff\left(\R\right)\right\}$ is dense in $\Co\left(\R\right)$ if and only if $\varphi$ is not a polynomial.
\end{theorem}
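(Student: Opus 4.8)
The plan is to reduce the statement to Theorem~\ref{lybenko} by localizing on compact subsets of $\R$; this is exactly what is meant by calling it an immediate corollary. Throughout write $G_{\varphi}:=\spa\left\{\varphi\circ g,~ g\in\Aff\left(\R\right)\right\}$, and recall that on $\Co\left(\R\right)$ the compact-open topology is the topology of uniform convergence on compact sets.

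For necessity I would argue exactly as in the compact case treated in the proof of Theorem~\ref{lybenko}: if $\varphi$ is a polynomial of degree $d$, then $\varphi\circ g$ is a polynomial of degree at most $d$ for every $g\in\Aff\left(\R\right)$, so $G_{\varphi}\subseteq P_{d}\left(\R\right)$. Since $P_{d}\left(\R\right)$ is finite-dimensional, it is a closed proper subspace of $\Co\left(\R\right)$, and hence $G_{\varphi}$ is not dense.

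For sufficiency, assume $\varphi$ is not a polynomial. A basic neighborhood of an arbitrary $f\in\Co\left(\R\right)$ has the form $\left\{h\in\Co\left(\R\right):~\sup_{t\in K}\left|h\left(t\right)-f\left(t\right)\right|<\epsilon\right\}$ for some compact $K\subset\R$ and $\epsilon>0$; so it suffices to show that for every compact $K\subset\R$ the set of restrictions $\left\{d|_{K}:~d\in G_{\varphi}\right\}$ is dense in $\left(\Co\left(K\right),\|\cdot\|_{\8}\right)$, since then each such $f$ can be approximated within $\epsilon$ on $K$ by an element of $G_{\varphi}$. (Note only this one, trivial, direction is needed, so no extension theorem is required.) Fix a compact $K\subset\R$. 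Every affine function on $K$ is the restriction to $K$ of an affine function on $\R$, and the constraint ``$g\left(K\right)\subset U$'' in Theorem~\ref{lybenko} is vacuous when $U=\R$, so
\[
\left\{d|_{K}:~d\in G_{\varphi}\right\}=\spa\left\{\varphi\circ g,~ g\in\Aff\left(K\right),~ g\left(K\right)\subset\R\right\}.
\]
Applying Theorem~\ref{lybenko} with $U=\R$ (an open interval) and the given non-polynomial $\varphi$ shows the right-hand side is dense in $\Co\left(K\right)$, which finishes the argument.

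I do not anticipate a genuine obstacle here; once Theorem~\ref{lybenko} is available, the only things worth a word of care are that finite-dimensional subspaces of $\Co\left(\R\right)$ are closed and proper, that affine functions on a compact set extend to $\R$, and that density in the compact-open topology can be tested one compact set at a time.
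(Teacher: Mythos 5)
Your reduction is exactly what the paper intends when it calls Theorem~\ref{cybenko} an immediate corollary of Theorem~\ref{lybenko}: take $U=\R$, test density one compact set at a time, and handle the polynomial case via the closed finite-dimensional subspace $P_{d}\left(\R\right)$. The proof is correct and takes the same route as the paper.
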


\subsection{Infinite dimensional variants}\label{subinf}

The following result extends \cite[Theorem 3.1]{pinkus} to the infinite dimensional case (see also \cite[Theorem 1]{ismailov} for a related extension).

\begin{theorem}\label{eugene0}
Let $E$ be a Hausdorff topological vector space, let $F\subset E^{*}$ be a subspace and let $\varphi:\R\to\R$ be continuous. Then $\spa\left\{\varphi\circ \left(f+r\1\right),~ f\in F,~r\in\R\right\}$ is dense in $\Co\left(E\right)$ if and only if $F$ separates points of $E$ and $\varphi$ is not a polynomial.
\end{theorem}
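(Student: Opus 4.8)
The plan is to prove the two implications separately, in each case transferring everything to the compact subsets of $E$. The key reduction is that a subspace $G\subset\Co(E)$ is dense for the compact-open topology if and only if, for every compact $K\subset E$, the restriction $G|_K$ is dense in $\Co(K)$: the ``if'' direction is immediate from the definition of the compact-open topology, and for ``only if'' one uses that the restriction map $\Co(E)\to\Co(K)$ is continuous with dense range (its range $\Co(E)|_K$ is a unital point-separating subalgebra of $\Co(K)$, since $E$ is Tychonoff, hence dense by Stone--Weierstrass). Write $G_\varphi:=\spa\{\varphi\circ(f+r\1):f\in F,\ r\in\R\}$.

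For necessity, suppose $G_\varphi$ is dense. If $F$ did not separate points, pick $x_0\ne y_0$ in $E$ with $f(x_0)=f(y_0)$ for all $f\in F$; then every neuron $\varphi\circ(f+r\1)$, hence every element of $\overline{G_\varphi}$ (point evaluations being compact-open continuous), takes equal values at $x_0$ and $y_0$ --- impossible for a dense subset of $\Co(E)$, as $E$ is Tychonoff. So $F$ separates points; in particular there are $f_0\in F$ and $v\in E$ with $f_0(v)\ne0$. If $\varphi$ were a polynomial of degree $d$, then on the compact arc $K_0:=\{tv:t\in[0,1]\}$ each $f+r\1$ restricts to an affine function of $t$, whence $G_\varphi|_{K_0}$ lies in the space of polynomials of degree $\le d$ in $t$, a proper closed subspace of $\Co(K_0)$; this contradicts density. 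Hence $\varphi$ is not a polynomial.

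For sufficiency, assume $F$ separates points and $\varphi$ is not a polynomial, and fix a compact $K\subset E$; it suffices to prove $G_\varphi|_K$ is dense in $\Co(K)$. As $F|_K$ separates the points of $K$, Stone--Weierstrass shows the unital algebra it generates is dense in $\Co(K)$; a generic element of this algebra has the form $(q\circ T)|_K$, where $f_1,\dots,f_n\in F$, the operator $T:=(f_1,\dots,f_n):E\to\R^n$ is continuous and linear, and $q$ is a polynomial on $\R^n$. Now $L:=T(K)$ is compact in $\R^n$, and since $\varphi$ is non-polynomial, Theorem~\ref{UAT} (with $m=n$) gives scalars $c_j$ and $g_j\in\Aff(\R^n)$ with $\sup_L|\sum_j c_j(\varphi\circ g_j)-q|<\varepsilon$. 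Writing $g_j=\ell_j+s_j$ with $\ell_j$ linear and $s_j\in\R$, we get $g_j\circ T=(\ell_j\circ T)+s_j\1$ with $\ell_j\circ T\in F$, so $\sum_j c_j\,\varphi\circ((\ell_j\circ T)+s_j\1)\in G_\varphi$, and since $T(K)=L$ it approximates $(q\circ T)|_K$ within $\varepsilon$ on $K$. Combining with a preliminary Stone--Weierstrass approximation of an arbitrary target in $\Co(K)$ by algebra elements $(q\circ T)|_K$ yields density.

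The compact-open and Stone--Weierstrass bookkeeping is routine, and the one substantive input is the finite-dimensional Theorem~\ref{UAT}. The point needing care --- and the only place the \emph{linearity} of $F$ is essential --- is that the approximants built on $\R^n$ actually lie in $G_\varphi$: this works because $\ell_j\circ T$, an arbitrary linear combination of the chosen $f_i$, again belongs to $F$. (One could instead avoid Theorem~\ref{UAT} for $m>1$: by polarization the algebra generated by $F|_K$ is spanned by powers $f^k$, $f\in F$, together with $\1$, and each $f^k|_K$ is approximated using only the one-dimensional Theorem~\ref{cybenko} on the compact set $f(K)\subset\R$.)
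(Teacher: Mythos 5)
Your proof is correct, but the sufficiency direction follows a genuinely different route from the paper's. You localize to an arbitrary compact $K\subset E$, use Stone--Weierstrass to approximate a target by polynomials $q\circ T$ in finitely many functionals $f_1,\dots,f_n\in F$, and then approximate each such polynomial on the compact set $T(K)\subset\R^n$ by elements of $G_\varphi$, using that $\ell_j\circ T\in F$ because $F$ is a subspace. The paper instead stays global: from the one-dimensional Theorem~\ref{cybenko} it extracts $\exp\in\overline{\spa}\{\varphi\circ g,\ g\in\Aff(\R)\}$, composes with $f\in F$ to get $\exp_f\in\overline{H}$, and observes that $\spa\{\exp_f\}$ is \emph{automatically} a unital point-separating subalgebra (since $\exp_f\exp_w=\exp_{f+w}$), so the compact-open Stone--Weierstrass theorem finishes in one stroke. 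The paper's multiplicative trick buys economy --- only the $m=1$ theorem is ever invoked --- whereas your argument, as written, calls on Theorem~\ref{UAT} for $m=n>1$, which the paper itself only establishes \emph{after} this theorem (as Corollary~\ref{Eugenue} applied to $E=\R^m$); relying on it here is not wrong if one treats Theorem~\ref{UAT} as known from the literature, but it would be circular within the paper's own development. Your parenthetical polarization remark (reducing the algebra generated by $F|_K$ to powers $f^k$ and approximating $t^k$ on $f(K)$ via the one-dimensional theorem) repairs this completely and should be promoted from an aside to the main argument. Your necessity argument is essentially the paper's, phrased slightly more elementarily: rather than pushing forward to $\Co(\R)$ along $t\mapsto te$ and invoking Theorem~\ref{cybenko}, you note directly that a polynomial $\varphi$ forces $G_\varphi$ to restrict to a finite-dimensional subspace of $\Co(K_0)$ on a segment $K_0$, which cannot be dense; your Stone--Weierstrass justification that restriction $\Co(E)\to\Co(K)$ has dense range is a clean substitute for the paper's Tietze-in-a-compactification argument.
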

\begin{proof}
Let $G:=\spa\left\{\varphi\circ g,~ g\in \Aff\left(\R\right)\right\}$, and let $H:=\spa\left\{\varphi\circ \left(f+r\1\right),~ f\in F,~r\in\R\right\}$.\medskip

Necessity: First, recall that $E$ is Tychonoff, and so $\Co\left(E\right)$ separates points of $E$. If $F$ does not separate certain two points of $E$, then $H$ does not separate them either, and so the latter cannot be dense in $\Co\left(E\right)$.

Take $e\ne 0_{E}$ and let $\psi:\R\to E$ be defined by $\psi\left(t\right):=te$. The composition with $\psi$ is a continuous operator $C_{\psi}:\Co\left(E\right)\to\Co\left(\R\right)$. Note that for any $h\in \Co\left(\R\right)$ and $r>0$ there is $\hat{h}\in\Co\left(E\right)$ such that $\hat{h}\circ\psi$ agrees with $h$ on $\left[-r,r\right]$ (this follows from Tietze extension theorem applied to the compact set $\left[-r,r\right]e$ in any Hausdorff compactification of $E$). This argument shows that $C_{\psi}$ has a dense image. Therefore, if $H$ is dense in $\Co\left(E\right)$, then $C_{\psi}H$ is dense in $\Co\left(\R\right)$. However, for every $f\in F$ and $r\in\R$ we have that $\left(f+r\1\right)\circ \psi\in \Aff\left(\R\right)$, and so $C_{\psi}\left(\varphi\circ\left(f+r\1\right)\right)=\varphi\circ\left(\left(f+r\1\right)\circ \psi\right)\in G$. Thus, $C_{\psi}H\subset G$ implies denseness of $G$ in $\Co\left(\R\right)$. By Theorem \ref{cybenko} we conclude that $\varphi$ is not a polynomial.\medskip

Sufficiency: By Theorem \ref{cybenko} we have that $\exp \in \overline{G}$. For every $f\in F$ and $g\in G$, we have that $g\circ f\in H$. Since the composition is a continuous operation (see \cite[Theorem 3.4.2]{engelking}), we conclude that $\exp_{f}:=\exp\circ f\in\overline{H}$. Note that $\exp_{f}\exp_{w}=\exp_{f+w}$, for any $f,w\in F$. Furthermore, if $e,u\in E$ are distinct, there is $f\in F$ such that $f\left(e-u\right)\ne 0$, hence $\frac{\exp_{f}\left(e\right)}{\exp_{f}\left(u\right)}=\exp\left(f\left(e-u\right)\right)\ne 1$, therefore $\exp_{f}\left(e\right)\ne\exp_{f}\left(u\right)$. It follows that $\spa\left\{\exp_{f},~f\in F\right\}\subset \overline{H}$ is a subalgebra of $\Co\left(E\right)$ which contains constants (observe that $\1=\exp_{0_{F}}$) and separates points. Stone-Weierstrass theorem (see, e.g. \cite[Theorem~16.5.7]{bn}) yields that $\spa\left\{\exp_{f},~f\in F\right\}$ is dense in $\Co\left(E\right)$, thus $\overline{H}=\Co\left(E\right)$.
\end{proof}

If $E,F$ are topological vector spaces, an \emph{affine map} from $E$ into $F$ is a map of the form $e\mapsto Te+f$, where $T:E\to F$ is a continuous linear operator, and $f\in F$ is fixed. We denote the collection of such maps by $\Aff\left(E,F\right)$, or just $\Aff\left(E\right)$ if $F=\R$. Since in a locally convex space linear functionals separate points, we get the following corollary.

\begin{corollary}\label{Eugenue}
Let $E$ be a locally convex Hausdorff space and let $\varphi\in\Co\left(\R\right)$. Then, \linebreak $\spa\left\{\varphi\circ g,~ g\in \Aff\left(E\right)\right\}$ is dense in $\Co\left(E\right)$ if and only if $\varphi$ is a non-polynomial.
\end{corollary}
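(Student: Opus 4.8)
The plan is to deduce this directly from Theorem~\ref{eugene0} by taking $F$ to be the full topological dual $E^{*}$. First I would unwind the definition of $\Aff(E)$: by the notion of affine map recalled just before the statement, an element $g\in\Aff(E)=\Aff(E,\R)$ is precisely a map of the form $e\mapsto f(e)+r$ with $f\in E^{*}$ and $r\in\R$, i.e. $g=f+r\1$. Consequently
\[
\spa\left\{\varphi\circ g,~g\in\Aff(E)\right\}=\spa\left\{\varphi\circ\left(f+r\1\right),~f\in E^{*},~r\in\R\right\},
\]
which is exactly the set appearing in Theorem~\ref{eugene0} specialized to $F=E^{*}$.

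Next I would check the separation hypothesis of Theorem~\ref{eugene0}. Since $E$ is locally convex and Hausdorff, the Hahn--Banach theorem guarantees that for any two distinct points $e,u\in E$ there is $f\in E^{*}$ with $f(e)\ne f(u)$; that is, $E^{*}$ separates the points of $E$. Theorem~\ref{eugene0} then applies directly: the span above is dense in $\Co(E)$ if and only if $E^{*}$ separates points of $E$ \emph{and} $\varphi$ is not a polynomial. As the first condition is automatic in the present setting, the criterion collapses to $\varphi$ being non-polynomial, which is precisely the assertion.

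There is no genuine obstacle here: the corollary is a straightforward specialization of Theorem~\ref{eugene0} combined with the standard fact that a locally convex Hausdorff space has a point-separating topological dual. The only point meriting a word of care is the elementary identification of $\Aff(E,\R)$ with $\{f+r\1:f\in E^{*},~r\in\R\}$, which follows immediately from the definition of affine map (a continuous linear functional plus a constant).
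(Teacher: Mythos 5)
Your proposal is correct and matches the paper exactly: the authors derive this corollary from Theorem~\ref{eugene0} with $F=E^{*}$, noting (as you do) that in a locally convex Hausdorff space the continuous linear functionals separate points by Hahn--Banach, and that $\Aff(E)=\{f+r\1:\ f\in E^{*},\ r\in\R\}$. Nothing further is needed.
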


Let $E$ and $F$ be vector spaces, and let $\Phi: E \rightarrow F$. We say that $\Phi$ is a \emph{polynomial} of degree $n\in\N$ if there exist mappings $M_{k}: E^{k} \to F$, $k=1, \dots ,n$ such that $M_{k}$ is linear separately in each coordinate, symmetric (i.e., $M\left(e_{1}, \dots, e_{k}\right) = M\left(e_{\pi\left(1\right)}, \dots, e_{\pi\left(k\right)}\right)$ for every permutation $\pi$ of $\left\{1, \dots, k\right\}$), and satisfies $\Phi\left(e\right) = M_{1}\left(e\right)+ \dots + M_{n}\left(e, \dots ,e\right)$ for all $e \in E$. For background material on polynomials between topological vector spaces, we refer the reader to \cite{bs}; see also \cite[Section~1]{hj} for the case of Banach spaces. We note that if $E$ is a non-trivial Banach lattice, then the map $\Phi: E \rightarrow E$ defined by $\Phi(x) = x^+$ is not a polynomial. Moreover, $\Phi \in C(E, E)$, and $\Phi$  does not satisfy the separating property introduced in \cite[Definition~2.6]{BDG23}. The following theorem may be viewed as a significant improvement of the results presented in \cite[Section~2]{BDG23}.

\begin{theorem}\label{nn}
Let $E,F$ be locally convex spaces and let $\Phi\in\Co\left(E,F\right)$. Then, the following conditions are equivalent:
\item[(i)] $\Phi$ is a polynomial;
\item[(ii)] There is $m\in\N$ such that $\nu\circ\Phi\circ\mathbf{h}\in P_{m}\left(\R\right)$, for every $\nu\in F^{*}$ and $\mathbf{h}\in\Aff\left(\R,E\right)$;
\item[(iii)] $\spa\left\{\nu\circ\Phi\circ \mathbf{g},~ \nu\in F^{*},~ \mathbf{g}\in \Aff\left(E,E\right)\right\}$ is not dense in $\Co\left(E\right)$.\medskip

If additionally $E$ and $F^{*}$ (with some topology stronger than weak*) are Baire, then the conditions above are equivalent to the fact that $\nu\circ\Phi\circ\mathbf{h}$ is a polynomial, for every $\nu\in F^{*}$ and $\mathbf{h}\in\Aff\left(\R,E\right)$.
\end{theorem}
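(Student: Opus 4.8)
The plan is to close the cycle (i)$\Rightarrow$(ii)$\Rightarrow$(iii)$\Rightarrow$(i) and then, for the addendum, to insert the fourth condition (iv) (``$\nu\circ\Phi\circ\mathbf{h}$ is a polynomial for all $\nu\in F^{*}$, $\mathbf{h}\in\Aff(\R,E)$'') into the cycle via (ii)$\Rightarrow$(iv)$\Rightarrow$(i). For (i)$\Rightarrow$(ii): if $\Phi$ has degree $n$, then each $\nu\circ\Phi$ is a polynomial of degree $\le n$ (apply $\nu$ to the multilinear forms $M_{k}$), and precomposing with $\mathbf{h}(t)=x+tv$ and expanding $M_{k}(x+tv,\dots,x+tv)$ by multilinearity yields a polynomial in $t$ of degree $\le n$, so (ii) holds with $m=n$. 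For (ii)$\Rightarrow$(iii): if the span $S:=\spa\{\nu\circ\Phi\circ\mathbf{g}:\nu\in F^{*},\ \mathbf{g}\in\Aff(E,E)\}$ were dense in $\Co(E)$, then, fixing $e\neq 0_{E}$ and $\psi(t)=te$, the continuous operator $C_{\psi}\colon\Co(E)\to\Co(\R)$, $h\mapsto h\circ\psi$, which has dense range by the Tietze argument from the proof of Theorem~\ref{eugene0}, would carry $S$ onto a dense subset of $\Co(\R)$; but $(\nu\circ\Phi\circ\mathbf{g})\circ\psi=\nu\circ\Phi\circ(\mathbf{g}\circ\psi)$ with $\mathbf{g}\circ\psi\in\Aff(\R,E)$, so (ii) gives $C_{\psi}(S)\subseteq P_{m}(\R)$, a proper closed subspace of $\Co(\R)$ --- a contradiction.

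For (iii)$\Rightarrow$(i) I would prove the contrapositive: if $\Phi$ is not a polynomial, then $S$ is dense. The heart of the matter is the claim that $T:=\spa\{\nu\circ\Phi\circ\mathbf{h}:\nu\in F^{*},\ \mathbf{h}\in\Aff(\R,E)\}$ is dense in $\Co(\R)$. If it were not, there would be a nonzero compactly supported $\mu\in\Co(\R)^{*}$ vanishing on $T$, and, polynomials being dense in $\Co(\R)$, some $m$ with $\int t^{m}\,d\mu\neq 0$. For each $\nu\in F^{*}$ and $v,x\in E$ the continuous function $\varphi_{\nu,v,x}(t):=\nu(\Phi(x+tv))$ then satisfies $\int\varphi_{\nu,v,x}\circ g\,d\mu=0$ for every $g\in\Aff(\R)$ (take $\mathbf{h}(t)=g(t)v+x$), whence Lemma~\ref{cybe} together with the mollification step in the proof of Theorem~\ref{lybenko} forces $\varphi_{\nu,v,x}\in P_{m-1}(\R)$. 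Thus $\nu\circ\Phi$ restricts to a polynomial of degree $\le m-1$ on every affine line, so by Fr\'echet's functional equation and the continuity of $\nu\circ\Phi$ (see \cite{bs}) $\nu\circ\Phi$ is a polynomial of degree $\le m-1$ for \emph{every} $\nu$; recovering the coefficients of $t\mapsto\nu(\Phi(x+tv))$ by Lagrange interpolation as values $\nu(w_{j}(v,x))$ of explicit elements $w_{j}(v,x)\in F$, and using that $F^{*}$ separates points of $F$ together with the ``scalarly polynomial implies polynomial'' principle of \cite{bs}, lifts this to ``$\Phi$ is a polynomial of degree $\le m-1$'', contradicting the hypothesis. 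Hence $T$ is dense, so $\exp\in\overline{T}$. Since $\mathbf{h}\circ\ell\in\Aff(E,E)$ whenever $\mathbf{h}\in\Aff(\R,E)$ and $\ell\in E^{*}$, the continuous operator $C_{\ell}\colon\Co(\R)\to\Co(E)$, $q\mapsto q\circ\ell$, maps $T$ into $S$, so $\exp_{\ell}:=\exp\circ\ell\in\overline{S}$ for every $\ell\in E^{*}$; exactly as in the proof of Theorem~\ref{eugene0}, $\{\exp_{\ell}:\ell\in E^{*}\}$ spans a subalgebra of $\Co(E)$ containing $\1$ and --- because $E$ is locally convex --- separating points, so Stone--Weierstrass yields $\overline{S}=\Co(E)$.

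For the addendum, (ii)$\Rightarrow$(iv) is trivial, so it remains to prove (iv)$\Rightarrow$(i) when $E$ and $(F^{*},\tau)$ are Baire for some vector topology $\tau$ on $F^{*}$ stronger than the weak$^{*}$ topology. Fixing $\nu$, the map $\nu\circ\Phi$ is continuous and restricts to a polynomial on every affine line, so by the theory of polynomials on Baire locally convex spaces (\cite{bs}) it is a polynomial, of some degree $d(\nu)<\infty$. For each $n$ set
\[
F^{*}_{n}:=\{\nu\in F^{*}:\ \nu\circ\Phi\ \text{is a polynomial of degree}\ \le n\}=\bigcap_{v,x\in E}\bigl\{\nu\in F^{*}:\ \langle\nu,\Delta_{v}^{n+1}\Phi(x)\rangle=0\bigr\},
\]
where $\Delta_{v}^{n+1}\Phi(x)=\sum_{k=0}^{n+1}\binom{n+1}{k}(-1)^{n+1-k}\Phi(x+kv)\in F$. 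Each $F^{*}_{n}$ is $\tau$-closed (each pairing is weak$^{*}$-continuous, hence $\tau$-continuous) and is a linear subspace (polynomials of degree $\le n$ form a vector space), and $\bigcup_{n}F^{*}_{n}=F^{*}$. Since $(F^{*},\tau)$ is Baire, some $F^{*}_{n_{0}}$ has nonempty interior, and a linear subspace of a topological vector space with nonempty interior is the whole space; hence $d(\nu)\le n_{0}$ for all $\nu$, and the interpolation-and-lifting argument of the previous paragraph gives that $\Phi$ is a polynomial of degree $\le n_{0}$, which is (i). The circle (i)$\Rightarrow$(ii)$\Rightarrow$(iv) completes the equivalence.

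The main obstacle --- or rather the ingredients I would quote rather than reprove --- is the classical structure theory of polynomials on topological vector spaces from \cite{bs}: that a continuous scalar function which is a polynomial of a \emph{uniformly bounded} degree along every line is a polynomial, its $F$-valued refinement via Hahn--Banach, and the Baire version that drops the uniform-degree hypothesis when $E$ is Baire. The genuinely delicate issue throughout is where the uniform degree bound comes from: in (iii)$\Rightarrow$(i) it falls out for free from the single annihilating measure $\mu$ through the Lemma~\ref{cybe} mechanism, which is the clean part of the argument; in the addendum it is produced instead by the ``subspace with nonempty interior'' step, and this is exactly why $F^{*}$ must carry a Baire topology that still renders the evaluations $\nu\mapsto\langle\nu,\Delta_{v}^{n+1}\Phi(x)\rangle$ continuous --- hence the hypothesis ``stronger than weak$^{*}$''. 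A secondary point to handle with care is that this Baire step really needs $F^{*}_{n}$ to be a \emph{subspace}, which uses that being a polynomial of degree $\le n$ is a linear condition on $\nu$.
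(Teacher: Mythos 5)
Your proof is correct, and its overall architecture (the cycle of implications, the reduction to the real line via $\psi(t)=te$, the Stone--Weierstrass argument with the exponentials $\exp_{\ell}$, and a Baire category argument on $F^{*}$) matches the paper's; but you diverge in the key direction. The paper proves (iii)$\Rightarrow$(ii) by contraposition and must therefore confront the case where every $\nu\circ\Phi\circ\mathbf{h}$ is a polynomial but of unbounded degree; it disposes of this with a dimension count (a closed infinite-dimensional subspace of the Fr\'echet space $\Co\left(\R\right)$ has uncountable algebraic dimension, so $\overline{\spa}\left\{\varphi_{m}\right\}$ must contain a non-polynomial) and then invokes Corollary \ref{Eugenue} as a black box. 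You instead prove (iii)$\Rightarrow$(i) directly: a single annihilating measure $\mu$ with $\int t^{m}\,d\mu\neq 0$ forces, through Lemma \ref{cybe} and the mollification step, the uniform bound $\varphi_{\nu,v,x}\in P_{m-1}\left(\R\right)$ across the whole family, which is precisely condition (ii) and hence (i) --- so the unbounded-degree case never arises. The price is that you must reopen the proof of Theorem \ref{lybenko} (you need the degree bound $m-1$ from its proof, not merely the density statement), whereas the paper only uses the statement of Corollary \ref{Eugenue}; the gain is that the dimension-counting trick is avoided entirely. In the Baire addendum you also swap the order of the two category arguments: you apply Baire on $E$ first (for each fixed $\nu$, to make $\nu\circ\Phi$ a polynomial) and then on $F^{*}$ via the closed subspaces $F^{*}_{n}$ to bound the degrees uniformly, while the paper fixes $\mathbf{h}$, applies Baire on $F^{*}$ to the subspaces $H_{m}$, and only afterwards uses that $E$ is Baire through \cite{bs}. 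Both orders lean on the same structure theory of \cite{bs}; your version makes it slightly more transparent where each Baire hypothesis is consumed.
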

\begin{proof}
(i)$\Rightarrow$(ii): If $\Phi$ is a polynomial of degree $m$, then $\nu\circ\Phi\circ\mathbf{h}\in P_{m}\left(\R\right)$, for every $\nu\in F^{*}$ and $\mathbf{h}\in\Aff\left(\R,E\right)$ (see \cite[Fact 38]{hj}).

(ii)$\Rightarrow$(i): Fix $\mathbf{h}\in\Aff\left(\R,E\right)$. As $\nu\circ\Phi\circ\mathbf{h}\in P_{m}\left(\R\right)$, for every $\nu\in F^{*}$, it follows from \cite[Corollary 4]{bs} that $\Phi\circ\mathbf{h}$ is a polynomial of degree at most $m$. As $\mathbf{h}$ was arbitrary, it follows from \cite[Corollary 3]{bs} that $\Phi$ is a polynomial of degree at most $m$.\medskip

(ii)$\Rightarrow$(iii): Assume that the span $V$ in question is dense and let $\psi$ be as in the proof of Necessity in Theorem \ref{eugene0}. The argument from that proof shows that $\left\{v\circ\psi,~ v\in V\right\}$ is dense in $\Co\left(\R\right)$. On the other hand, for every $\nu\in F^{*}$ and $\mathbf{g}\in\Aff\left(E,E\right)$ we have $\mathbf{g}\circ \psi\in \Aff\left(\R,E\right)$, and so by assumption
$\nu\circ\Phi\circ \mathbf{g}\circ \psi\in P_{m}\left(\R\right)$. Hence, $\left\{v\circ\psi,~ v\in V\right\}\subset P_{m}\left(\R\right)$ cannot be dense. Contradiction.\medskip

(iii)$\Rightarrow$(ii): First, assume that $\nu_{0}\in F^{*}$ and $\mathbf{h}_{0}\in\Aff\left(\R,E\right)$ are such that $\varphi:=\nu_{0}\circ\Phi\circ\mathbf{h}_{0}$ is not a polynomial. Then, for every $f\in\Aff\left(E\right)$ we have that $\mathbf{h}_{0}\circ f\in \Aff\left(E,E\right)$, and so
$\left\{\nu\circ\Phi\circ \mathbf{g},~ \nu\in F^{*},~ \mathbf{g}\in \Aff\left(E,E\right)\right\}$ contains $\left\{\varphi\circ f,~  f\in \Aff\left(E\right)\right\}$, whose span is dense by Corollary \ref{Eugenue}, contradiction. Hence, $\nu\circ\Phi\circ\mathbf{h}$ is a polynomial, for every $\nu\in F^{*}$ and $\mathbf{h}\in\Aff\left(\R,E\right)$.

Assume that for every $m\in\N$ there are $\nu_{m}\in F^{*}$ and $\mathbf{h}_{m}\in\Aff\left(\R,E\right)$ such that $\deg \varphi_{m}>m$, where $\varphi_{m}:=\nu_{m}\circ\Phi\circ\mathbf{h}_{m}$. Then, $\overline{\spa}\left\{\varphi_{m}\right\}_{m\in\N}$ in $\Co\left(\R\right)$ must contain a non-polynomial $\varphi$ (here we use the fact that the algebraic dimension of a closed infinite dimensional subspace of the Frechet space $\Co\left(\R\right)$ is uncountable, while it is countable for the space of all polynomials). From the argument above
$\left\{\nu\circ\Phi\circ \mathbf{g},~ \nu\in F^{*},~ \mathbf{g}\in \Aff\left(E,E\right)\right\}$ contains $\left\{\varphi_{m}\circ f,~ m\in\N,~ f\in \Aff\left(E\right)\right\}$. Since the composition is continuous, it follows that $\overline{\spa}\left\{\nu\circ\Phi\circ \mathbf{g},~ \nu\in F^{*},~ \mathbf{g}\in \Aff\left(E,E\right)\right\}$ contains $\left\{\varphi\circ f, ~ f\in \Aff\left(E\right)\right\}$, whose span is dense by Corollary \ref{Eugenue}. This contradiction shows that there is $m\in\N$ such that $\nu\circ\Phi\circ\mathbf{h}\in P_{m}\left(\R\right)$, for every $\nu\in F^{*}$ and $\mathbf{h}\in\Aff\left(\R,E\right)$, as required.\medskip

It is trivial that (ii) implies the last condition. We now prove that this condition implies (i) if $E$ and $F^{*}$ are Baire. Fix $\mathbf{h}\in\Aff\left(\R,E\right)$ and for $m\in\N$ let $H_{m}$ be the set of $\nu\in F^{*}$ such that $\nu\circ\Phi\circ\mathbf{h}\in P_{m}\left(\R\right)$. It is easy to see that $H_{m}$ is weak* closed subspace of $F^{*}$, and that $\bigcup\limits_{m\in\N}H_{m}=F^{*}$. Since $F^{*}$ is Baire, it follows that $H_{m}=F^{*}$, for some $m\in\N$, and so by \cite[Corollary 4]{bs} that $\Phi\circ\mathbf{h}$ is a polynomial. As $\mathbf{h}$ was arbitrary, it follows from \cite[Proposition 4]{bs} that $\Phi$ is a polynomial.
\end{proof}

Let $K$ be a compact Hausdorff space and $F$ be a Banach space. The space of continuous functions $\Co\left(K,F\right)$ from $K$ to $F$ is a Banach space under the pointwise algebraic operations and the norm $\|\mathbf{h}\|=\bigvee\limits_{x\in K}\|\mathbf{h}\left(x\right)\|$, for $\mathbf{h} \in\Co\left(K,F\right)$.\medskip

For $g \in\Co\left(K\right)$ and $f \in F$ we denote by $g\otimes f$ the element of $\Co\left(K,F\right)$ defined by  $\left[g\otimes f\right]\left(x\right):=g\left(x\right)f$ for all $x\in K$. Note that $\|g\otimes f\|=\|g\|\|f\|$. For $A \subset \Co\left(K\right)$ we define $A\otimes F:=\left\{g \otimes f, g \in A, f \in F\right\}$. Note that $\spa\left(\Co\left(K\right) \otimes F\right)$ is dense in $\Co\left(K,F\right)$ (see e.g. \cite[Theorem 6.1.2]{L03}).\medskip

If $X$ is a Tychonoff space, we endow $\Co\left(X,F\right)$ with the topology of uniform convergence on compact sets in a similar fashion as in the compact case. We now refine the just mentioned result as follows.

\begin{lemma}\label{dens}
Let $X$ be a Tychonoff space and let $F$ be a Banach space. If $A\subset\Co\left(X\right)$ is such that $\spa A$ is dense  in $\Co\left(X\right)$, then $\spa\left(A \otimes F\right)$ is dense in $\Co\left(X,F\right)$.
\end{lemma}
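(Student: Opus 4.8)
The plan is to reduce the statement to the already-known compact case \cite[Theorem 6.1.2]{L03} by combining an $\varepsilon$-approximation on a fixed compact set with a Stone--Weierstrass argument that lets me replace arbitrary continuous functions on that compact set by restrictions of elements of the dense subspace $\spa A$.

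Concretely, I would fix $\mathbf{h}\in\Co(X,F)$, a compact set $K\subset X$, and $\varepsilon>0$, and aim to produce $\mathbf{g}\in\spa(A\otimes F)$ with $\sup_{x\in K}\|\mathbf{g}(x)-\mathbf{h}(x)\|<\varepsilon$, since such sets form a base of neighborhoods of $\mathbf h$ for the compact-open topology of $\Co(X,F)$. First, since $\mathbf{h}|_{K}\in\Co(K,F)$, the compact case yields $g_{1},\dots,g_{n}\in\Co(K)$ and $f_{1},\dots,f_{n}\in F\setminus\{0\}$ with $\sup_{x\in K}\big\|\sum_{i}g_{i}(x)f_{i}-\mathbf{h}(x)\big\|<\varepsilon/2$. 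Next I would show that the restrictions of $\spa A$ to $K$ are dense in $\Co(K)$: the restriction map $R\colon\Co(X)\to\Co(K)$ is continuous for the compact-open topologies (a defining seminorm $\sup_{L}|\cdot|$ of $\Co(K)$, $L\subset K$ compact, pulls back to a defining seminorm of $\Co(X)$), its image $R(\Co(X))$ is a subalgebra of $\Co(K)$ containing the constants, and since $X$ is Tychonoff this image separates the points of the compact Hausdorff space $K$; hence $R(\Co(X))$ is dense in $\Co(K)$ by Stone--Weierstrass, and because $\spa A$ is dense in $\Co(X)$ and $R$ is continuous, $R(\spa A)$ is dense in $\Co(K)$ as well. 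Choosing $\tilde g_{i}\in\spa A$ with $\sup_{x\in K}|\tilde g_{i}(x)-g_{i}(x)|<\varepsilon\big/\big(2\sum_{j}\|f_{j}\|\big)$ and setting $\mathbf{g}:=\sum_{i}\tilde g_{i}\otimes f_{i}$, a triangle-inequality estimate on $K$ gives $\|\mathbf{g}(x)-\mathbf{h}(x)\|<\varepsilon$, and $\mathbf{g}\in\spa(A\otimes F)$ because $g\otimes f\in\spa(A\otimes F)$ whenever $g\in\spa A$ and $f\in F$ (expand $g$ over $A$ and use bilinearity).

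I expect the only real content to be the Stone--Weierstrass step: this is the point where the hypothesis on all of $X$, rather than merely on a compact $K$, is used, and it is what upgrades ``$\spa A$ dense in $\Co(X)$'' to the local statement ``$\{g|_{K}:g\in\spa A\}$ dense in $\Co(K)$'' that makes the reduction to \cite[Theorem 6.1.2]{L03} possible. The remaining ingredients --- the base of the compact-open topology, continuity of the restriction map, the elementary observation that $\spa(A\otimes F)$ absorbs $g\otimes f$ for $g\in\spa A$, and the final $\varepsilon$-estimate --- are routine.
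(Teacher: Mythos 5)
Your proposal is correct and follows essentially the same route as the paper: reduce to the compact case via \cite[Theorem 6.1.2]{L03}, then approximate each scalar component $g_i\in\Co(K)$ on $K$ by an element of $\spa A$ and re-assemble with the $f_i$. The only difference is a technical one in passing from $\Co(K)$ to $\Co(X)$: the paper extends each $g_i$ exactly using the Tietze theorem in a Hausdorff compactification of $X$ and then approximates the extension, whereas you settle for density of the restrictions $\left\{\left.g\right|_{K}:g\in\Co(X)\right\}$ via Stone--Weierstrass, which suffices equally well.
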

\begin{proof}
Let $\mathbf{h} \in\Co\left(X,F\right)$, let $K\subset X$ be compact, and let $\varepsilon>0$. By the result quoted above there
is $\mathbf{g}\in\spa\left(\Co\left(K\right) \otimes F\right)$ such that $\|\left.\mathbf{h}\right|_{K}-\mathbf{g}\|\le\varepsilon$. We have that $\mathbf{g}=\sum\limits_{k=1}^{n}g_{k}\otimes f_{k}$, where $g_{k}\in \Co\left(K\right)$, and $f_{k}\in F$, for every $k=1,...,n$. For $k=1,...,n$, by Tietze extension theorem (applied to $K$ in any Hausdorff compactification of $X$) there is $\hat{g}_{k}\in \Co\left(X\right)$ which extends $g_{k}$, and by assumption there is $e_{k}\in \spa A$ which is $\frac{\varepsilon}{n\|f_{k}\|}$-close to $\hat{g}_{k}$ on $K$. Let $\mathbf{e}:=\sum\limits_{k=1}^{n}e_{k}\otimes f_{k}\in \spa\left(\left(\spa A\right)\otimes F\right)=\spa\left(A \otimes F\right)$. It follows that $\mathbf{e}$ is $2\varepsilon$-close to $\mathbf{h}$ on $K$. As $K$ and $\varepsilon$ were arbitrary, the claim follows.
\end{proof}

Combining this with Corollary \ref{Eugenue} yields the following.

\begin{corollary}\label{foivos}
Let $E$ be a locally convex space, let $F$ be a Banach space, and let $\varphi\in\Co\left(\R\right)$ be non-polynomial. Then, $$\spa\left(\left\{\varphi\circ g,~ g\in \Aff\left(E\right)\right\}\otimes F\right)$$ is dense in $\Co\left(E,F\right)$.
\end{corollary}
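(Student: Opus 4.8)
The plan is to simply combine the two immediately preceding results in the obvious way, since Corollary \ref{foivos} is quite literally their conjunction. First I would invoke Corollary \ref{Eugenue}: since $E$ is a locally convex Hausdorff space (locally convex spaces here are Hausdorff by the standing convention) and $\varphi \in \Co(\R)$ is non-polynomial, the span of $A := \{\varphi\circ g : g \in \Aff(E)\}$ is dense in $\Co(E)$. Next I would apply Lemma \ref{dens} with $X := E$ — which is a Tychonoff space, being a Hausdorff topological vector space — and this set $A$. The lemma then yields exactly that $\spa(A\otimes F)$ is dense in $\Co(E,F)$, which is the assertion.

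The only point requiring a word of care is that the topologies match up: in Lemma \ref{dens} the space $\Co(X,F)$ carries the topology of uniform convergence on compact subsets of $X$, and in Corollary \ref{Eugenue} the space $\Co(E)$ carries the compact-open topology; both are precisely the topologies we want on $\Co(E,F)$ and $\Co(E)$ respectively. So no reconciliation is needed. I would also remark that Corollary \ref{Eugenue} does require $E$ to be Hausdorff locally convex so that its continuous linear functionals separate points — this is what makes $\Aff(E)$ large enough — but that hypothesis is already part of the statement of Corollary \ref{foivos}.

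There is no real obstacle here; the content has been front-loaded into Lemma \ref{dens} (whose proof already handled the subtlety of passing from density of $\spa A$ in $\Co(X)$ to density of $\spa(A\otimes F)$ in $\Co(X,F)$ via Tietze extension and the tensor-density fact from \cite{L03}) and into Corollary \ref{Eugenue} (which rests on Theorem \ref{eugene0} and ultimately on the scalar Theorem \ref{cybenko}). So the proof is a one-line citation: apply Lemma \ref{dens} to the dense span furnished by Corollary \ref{Eugenue}. If anything, I might spell out that $\spa((\spa A)\otimes F) = \spa(A\otimes F)$, exactly as was used in the proof of Lemma \ref{dens}, so that the statement is phrased in terms of $A$ itself rather than its span.
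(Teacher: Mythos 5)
Your proposal is correct and is exactly the paper's argument: the paper derives Corollary \ref{foivos} by noting that Corollary \ref{Eugenue} makes $\spa\left\{\varphi\circ g,~ g\in \Aff\left(E\right)\right\}$ dense in $\Co\left(E\right)$ and then applying Lemma \ref{dens} with $X=E$. Nothing further is needed.
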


\section{Applications to vector lattice theory}

In this section we present some instances when the results of the preceding section might be helpful in the context of vector lattice theory.

\subsection{Relatively uniformly closed sublattice generated by a set}\label{subhui}

Let $F$ be an Archimedean vector lattice. There are multiple formulas to describe the (linear) sublattice of $F$ generated by a given set $A\subset F$ (see e.g. \cite[Proposition 2.6]{dw}). The case when $A\subset F_{+}$ contains only two elements is particularly simple. Namely, it was proved in \cite[Theorem 2.1]{bhp} that if $u,v\in F_{+}$, then $\vlt\left\{u,v\right\}=\spa \left\{\left(ru+sv\right)^{+},~r,s\in\R\right\}=\spa\left(\spa\left\{u,v\right\}\right)^{+}$ (see also proofs in \cite[Theorem 1.1]{huijsmans} or \cite[Theorem 3.1]{dw}, and applications of this formula in e.g. \cite{gl}). It was observed in \cite[Examples 2.3 and 2.4]{huijsmans} that the equality $\vlt\left(A\right)=\spa\left(\spa\left(A\right)\right)^{+}$ may fail if $A$ contains two non-positive elements, or three positive elements. Versions of this formula for sets with sufficiently many disjoint vectors are established in \cite[Section 3]{dw}.

In this section we attempt to somewhat rescue the situation by taking the closure. For this we need to introduce the suitable notion of convergence.

For $e\in F_{+}$ the principal ideal $F_{e}$ is endowed with the norm $\|\cdot\|_{e}$ defined by $\|f\|_{e}=\bigwedge\left\{\alpha\ge 0:~ \left|f\right|\le\alpha e\right\}$. A net $\left(f_{\alpha}\right)_{\alpha\in A}\subset F$ converges \emph{uniformly} to $f$ \emph{relative} to $e\in F$ if $f\in F_{e}$ and for every $\varepsilon>0$ there is $\alpha_{0}$ such that $\|f-f_{\alpha}\|_{e}\le\varepsilon$ (and in particular $f_{\alpha}\in F_{e}$), for every $\alpha\ge \alpha_{0}$. We denote it by $f_{\alpha}\xrightarrow[]{\|\cdot\|_{e}}f$ and say that $\left(f_{\alpha}\right)_{\alpha\in A}\subset F$ converges \emph{relatively uniformly} to $f$, if $f_{\alpha}\xrightarrow[]{\|\cdot\|_{e}}f$, for some $e$.\medskip

Throughout the section we fix $\Phi:F\to F$ defined by $\Phi\left(f\right)=rf^{+}+sf^{-}$, for some $r,s\in\R$ with $s\ne -r$. Note that if $r=1$, and $s=0$ we get the positive part (a.k.a. ReLU), and if $r=s=1$ we get the absolute value. We do not allow $s=-r$, because then we would have $\Phi\left(f\right)=rf$, for every $f\in F$.

\begin{theorem}\label{spanning_lemma}
If $F$ is an Archimedean vector lattice and $A\subset F_{+}$ then $\spa\Phi\left(\spa A\right)$ is a relatively uniformly dense subspace of $\vlt\left(A\right)$.
\end{theorem}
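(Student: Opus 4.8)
The plan is to reduce the statement to the two-element case of the Baker–Huijsmans–Polyrakis formula $\vlt\{u,v\}=\spa\{(ru+sv)^+ : r,s\in\R\}$ (quoted in the excerpt), together with a relative-uniform approximation argument for arbitrary finite subsets of $A$. Since $\vlt(A)=\bigcup\{\vlt(B) : B\subset A \text{ finite}\}$ and $\spa\Phi(\spa A)=\bigcup\{\spa\Phi(\spa B) : B\subset A \text{ finite}\}$, and since relative-uniform density is preserved under such unions, it suffices to prove the theorem for $A=\{u_1,\dots,u_n\}\subset F_+$ finite, by induction on $n$. The base case $n\le 2$ is exactly formula $(\star)$, so we may assume $n\ge 3$ and that the result holds for $n-1$ positive elements.

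For the inductive step I would work inside the principal ideal $F_e$ where $e:=u_1\vee\cdots\vee u_n$ (or $e:=u_1+\cdots+u_n$), which is an Archimedean vector lattice with order unit $e$, hence isometrically a dense sublattice of some $C(K)$ by Kakutani–Krein. Relative-uniform convergence in $F$ with respect to $e$ corresponds to uniform (sup-norm) convergence in $C(K)$. Thus it is enough to show: for $f_1,\dots,f_n\in C(K)_+$, the space $\spa\Phi(\spa\{f_1,\dots,f_n\})$ is $\|\cdot\|_\infty$-dense in $\vlt\{f_1,\dots,f_n\}$. Now $\vlt\{f_1,\dots,f_n\}$ is generated as a lattice by the coordinate functions of the map $g=(f_1,\dots,f_n):K\to\R^n$; concretely it consists of functions $p\circ g$ where $p$ ranges over piecewise-linear (i.e. $\vlt(\Aff(\R^n))$) functions $\R^n\to\R$. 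Since $\Phi(x)=rx^+ + sx^- = \tfrac{r+s}{2}|x| + \tfrac{r-s}{2}x$ with $r+s\ne 0$, the function $\Phi$ is a fixed non-polynomial continuous function on $\R$, and by Corollary~\ref{Eugenue} (applied with $E=\R^n$, or rather with $E$ the relevant finite-dimensional affine hull) the span $\spa\{\Phi\circ h : h\in\Aff(\R^n)\}$ is dense in $C(\R^n)$ in the compact-open topology; restricting to the compact set $g(K)$ and composing with $g$ shows $\spa\{\Phi\circ(h\circ g) : h\in\Aff(\R^n)\}$ is $\|\cdot\|_\infty$-dense in $C(g(K))\circ g$, which contains all of $\vlt\{f_1,\dots,f_n\}$. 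Finally each $h\circ g$ for $h\in\Aff(\R^n)$ is an element of $\spa\{f_1,\dots,f_n,\1\}$, so $\Phi\circ(h\circ g)\in\Phi(\spa\{f_1,\dots,f_n,\1\})$; absorbing the constant requires a small remark (either $\1\in F_e$ is available, or one uses that affine functions with nonzero constant term can be approximated on $g(K)$ by linear ones composed with a translated set of generators, since $K$ is compact one can restrict to $h$ with $h(g(K))$ inside any chosen open set).

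The main obstacle, and the point needing the most care, is the gap between "affine" and "linear": formula $\Phi(f)=rf^++sf^-$ only involves $\spa A$, with no constant function adjoined, whereas the natural application of Corollary~\ref{Eugenue} produces compositions $\varphi\circ g$ with $g\in\Aff(E)$, i.e. $g$ allowed a constant term. On a compact domain this is harmless — one replaces $g=\ell+c\1$ by $\ell$ after translating, using that $\Phi\circ(\ell+c\1)$ restricted to $K$ agrees with a rescaled copy of $\Phi\circ\ell'$ for a suitable $\ell'\in\spa A$ only when $c$ lies in the range, so more precisely one approximates the target $p\circ g$ directly by elements of $\spa\{\Phi\circ\ell : \ell\in\spa\{f_1,\dots,f_n\}\}$ using that this last span already separates points of $g(K)$ and contains constants up to scalars via $\Phi(\ell)+\Phi(-\ell)=(r+s)|\ell|$ and positivity of the $f_i$ — but the cleanest route is to invoke Corollary~\ref{Eugenue}/Theorem~\ref{eugene0} with $F$ the full dual so that affine functions are permitted, prove density of $\spa\Phi(\Aff)$ in $C(g(K))$, and then separately observe that every such affine composition restricted to the compact $g(K)$ lies in the relative-uniform closure of $\spa\Phi(\spa A)$ because $A\subset F_+$ lets us write the constant $\1\big|_{g(K)}$ as a relative-uniform limit built from the $u_i$ (this is where positivity of $A$, rather than arbitrariness, is essential — it is exactly the hypothesis that fails in Huijsmans' counterexamples). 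I would also double-check that the induction is not actually needed: the argument via $C(K)$ and Corollary~\ref{Eugenue} handles all finite $n$ at once, so the only genuinely new input beyond $(\star)$ is the universal approximation theorem of Section~2, which is presumably the paper's point.
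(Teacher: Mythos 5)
Your overall route is the same as the paper's: reduce to a finite set $A=\left\{a_{1},\dots,a_{n}\right\}$, pass to the principal ideal $F_{e}$ and represent it as a dense sublattice of $\Co\left(K\right)$, identify $\vlt\left(A\right)$ with the lattice-linear functions composed with $\mathbf{a}=\left(a_{1},\dots,a_{n}\right):K\to\left[0,1\right]^{n}$, and invoke Corollary~\ref{Eugenue} for $\varphi\left(t\right)=rt^{+}+st^{-}$ on $\R^{n}$. You are also right that no induction on $n$ is needed and that the only new input beyond $(\star)$ is the universal approximation theorem. However, the step you yourself flag as the crux --- passing from $\varphi\circ\left(\text{affine}\right)\circ\mathbf{a}$ to $\Phi\left(\spa A\right)$, i.e.\ absorbing the constant term $b\1$ --- is exactly the step you do not close, and none of your three proposed fixes works as stated. ``$\1\in F_{e}$'' is true but irrelevant: you need $\1\in\spa A$, since $\Phi$ is only applied to elements of $\spa A$. ``Translating the generators'' produces elements outside $\spa A$ unless $\1\in\spa A$ already, so it is circular. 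And $\Phi\left(\ell\right)+\Phi\left(-\ell\right)=\left(r+s\right)\left|\ell\right|$ at best puts a constant into $\spa\Phi\left(\spa A\right)$, which does not help because $\Phi$ is nonlinear: knowing $\1\in\spa\Phi\left(\spa A\right)$ does not yield $\Phi\left(\ell+b\1\right)\in\spa\Phi\left(\spa A\right)$.

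The paper's resolution is a one-line identity that your setup almost forces, provided you make the right choice of unit: take $e:=a_{1}+\cdots+a_{n}$ (the additive choice; your alternative $e=a_{1}\vee\cdots\vee a_{n}$ does \emph{not} work here). Under the representation $e$ becomes $\1$, so $a_{1}+\cdots+a_{n}=\1$ on $K$ and hence $\1\in\spa A$ exactly; for $g\left(y\right)=u\cdot y+b$ one then has $\sum_{i}u_{i}a_{i}+b\1=\sum_{i}\left(u_{i}+b\right)a_{i}\in\spa A$, so $\varphi\circ g\circ\mathbf{a}=\Phi\left(f\right)$ with $f\in\spa A$ on the nose --- no limits and no separate argument for constants. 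This is precisely where $A\subset F_{+}$ enters. With that identity inserted, your argument coincides with the paper's.
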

\begin{proof}
It is clear that $\spa\Phi\left(\spa A\right)\subset \vlt\left(A\right)$. In order to prove its density it is enough to consider the case when $A$ is finite, since every element of $\vlt\left(A\right)$ is contained in $\vlt\left(A'\right)$, for some finite $A'\subset A$ (which depends on the given element).

Thus, assume that $A=\left\{a_{1},...,a_{n}\right\}$, and let $e:=a_{1}+...+a_{n}$. We have that $\vlt\left(A\right)\subset F_{e}$. It is enough to show that every element of $\vlt\left(A\right)$ can be approximated by elements of $\spa\varphi\left(\spa A\right)$ with respect to $\|\cdot\|_{e}$, and so without loss of generality we may assume that $F=F_{e}$. Hence, we may further assume that $F$ is a dense sublattice of $\Co\left(K\right)$, for some compact Hausdorff $K$ and $e=\1$ (see \cite[Theorem 45.3]{lz}), so that, in particular, $A\subset\left[\0,\1\right]$. Also, $\|\cdot\|_{e}$ is now simply the supremum norm on $K$. Finally, if $\varphi:\R\to\R$ is defined by $\varphi\left(t\right)=rt^{+}+st^{-}$, then for $f\in F\subset \Co\left(K\right)$ we have that $\Phi\left(f\right)=\varphi\circ f$.

Let $\mathbf{a}:K\to\left[0,1\right]^{n}$ be defined by $\mathbf{a}\left(x\right):=\left(a_{1}\left(x\right),...,a_{n}\left(x\right)\right)$. Note that $\vlt\left(A\right)=\left\{h\circ\mathbf{a},~h\in LL\left(\R^{n}\right)\right\}$, where $LL\left(\R^{n}\right)$ stands for the space of lattice-linear functions on $\R^{n}$.

We claim that $\varphi\circ\Aff\left(\R^{n}\right)\circ \mathbf{a} \subset \Phi\left(\spa A\right)$. Let $g \in \Aff\left(\R^n\right)$ so that $g\left(y\right)=u\cdot y+b$, $y\in\R^{n}$, where $u=\left(u_{1},\cdots,u_{n}\right) \in \R^{n}$ and $b \in \R$.
For every $x \in K$ we have that $$\left[\varphi \circ g \circ  \mathbf{a}\right]\left(x\right)=\varphi\left(\sum_{i=1}^n u_{i}a_{i}\left(x\right)+b\right)=\left[\Phi\left(f\right)\right]\left(x\right),$$ where $f=\sum_{i=1}^{n} u_{i}a_{i}+b\sum_{i=1}^{n} a_{i} \in \spa A$. Therefore $$\spa\left\{\varphi\circ \Aff\left(\R^n\right)\right\}\circ \mathbf{a} \subset \spa\left\{\varphi\circ\Aff\left(\R^n\right)\circ \mathbf{a}\right\}\subset \spa\left\{ \Phi\left(\spa A\right)\right\}.$$

Finally, fix $h\in LL\left(\R^{n}\right)$ and $\varepsilon>0$. By Corollary \ref{Eugenue} there is $g \in \spa\left\{\varphi\circ \Aff\left(\R^n\right)\right\}$ such that $\|h-g\|_{\left[0,1\right]^{n}}\le\varepsilon$. Then, $\|h\circ\mathbf{a}-g\circ\mathbf{a}\|\le\varepsilon$ and the result follows. 
\end{proof}

We call a set $A\subset F$ \emph{relatively uniformly closed} if it contains all existing relative uniform limits of the nets in $A$. The intersection of all relatively uniformly closed sets containing $B\subset F$ is its \emph{relative uniform closure} $\overline{B}$. We will need the fact that if $E\subset F$ is a sublattice, then $\overline{E}$ is also a sublattice (see \cite[Theorem 63.1(ii)]{lz}). Hence, the smallest relatively uniformly closed sublattice containing $B\subset F$ is $\overline{\vlt\left(B\right)}$. Applying Theorem \ref{spanning_lemma} yields the following.

\begin{corollary}
If $F$ is an Archimedean vector lattice and $A\subset F_{+}$ then\linebreak $\overline{\vlt\left(A\right)}=\overline{\spa\Phi\left(\spa A\right)}$. In particular, $\overline{\spa\Phi\left(\spa A\right)}$ is the relatively uniformly closed sublattice generated by $A$ in $F$.
\end{corollary}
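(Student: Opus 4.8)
The corollary asserts that $\overline{\vlt(A)}=\overline{\spa\Phi(\spa A)}$ for $A\subset F_+$, where the bar denotes relative uniform closure. The plan is to extract this from Theorem~\ref{spanning_lemma} together with the stability of sublattices under relative uniform closure.

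\medskip

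First I would record the inclusions at the algebraic level: by Theorem~\ref{spanning_lemma} we have $\spa\Phi(\spa A)\subseteq\vlt(A)$, and $\vlt(A)$ is a sublattice, hence $\overline{\vlt(A)}$ is a relatively uniformly closed sublattice by \cite[Theorem 63.1(ii)]{lz}. Taking closures in the inclusion $\spa\Phi(\spa A)\subseteq\vlt(A)\subseteq\overline{\vlt(A)}$ and using that $\overline{\vlt(A)}$ is already closed gives $\overline{\spa\Phi(\spa A)}\subseteq\overline{\vlt(A)}$. For the reverse inclusion, the density part of Theorem~\ref{spanning_lemma} says that $\spa\Phi(\spa A)$ is relatively uniformly dense in $\vlt(A)$, which means precisely $\vlt(A)\subseteq\overline{\spa\Phi(\spa A)}$; taking relative uniform closures of both sides and noting that the right-hand side is closed yields $\overline{\vlt(A)}\subseteq\overline{\spa\Phi(\spa A)}$. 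Combining the two inclusions proves the stated equality.

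\medskip

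For the ``in particular'' clause, I would argue that $\overline{\spa\Phi(\spa A)}=\overline{\vlt(A)}$ is the smallest relatively uniformly closed sublattice containing $A$. Indeed, as noted just before the corollary, the smallest relatively uniformly closed sublattice containing $A$ is exactly $\overline{\vlt(A)}$ (since $\vlt(A)$ is by definition the smallest sublattice containing $A$, and the relative uniform closure of a sublattice is again a sublattice); the displayed equality then transfers this description to $\overline{\spa\Phi(\spa A)}$. One small point to verify is that $A\subseteq\overline{\spa\Phi(\spa A)}$: this follows since $A\subseteq\vlt(A)\subseteq\overline{\spa\Phi(\spa A)}$ by the density statement.

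\medskip

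I do not anticipate a serious obstacle here — the proof is essentially a formal manipulation of closures — but the one point requiring a little care is the interplay between the two notions of ``dense/closed'' appearing in Theorem~\ref{spanning_lemma} and in the definition of relative uniform closure: Theorem~\ref{spanning_lemma} delivers approximation with respect to a single gauge $\|\cdot\|_e$ (obtained by reducing to the case $F=F_e$), whereas the relative uniform closure allows different dominating elements for different nets. One must check that net-convergence in the fixed-$e$ sense is subsumed by relative uniform convergence, which is immediate, so that ``$\|\cdot\|_e$-dense in $\vlt(A)$ for a suitable $e$'' indeed implies ``$\vlt(A)\subseteq\overline{\spa\Phi(\spa A)}$''. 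With that observation in place the argument closes without further difficulty.
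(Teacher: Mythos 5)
Your proof is correct and matches the paper's intent exactly: the paper derives this corollary immediately from Theorem~\ref{spanning_lemma} together with the observation (made just before the statement) that $\overline{\vlt(B)}$ is the smallest relatively uniformly closed sublattice containing $B$, which is precisely the closure manipulation you carry out. Your final remark is also handled correctly, since Theorem~\ref{spanning_lemma} already asserts density in the relative uniform sense, so nothing further needs to be checked there.
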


In a Banach lattice a set is norm closed iff it is relatively uniformly closed (it follows from \cite[Theorem 15.6]{zaanen}).

\begin{corollary}\label{formula}
If $F$ is a Banach lattice and $A\subset F_{+}$ then $\overline{\spa}\Phi\left(\spa A\right)$ is the closed sublattice generated by $A$ in $F$.
\end{corollary}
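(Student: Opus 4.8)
The plan is to derive this corollary directly from the preceding one, the only additional ingredient being the remark recorded just above that, in a Banach lattice, a subset is norm closed if and only if it is relatively uniformly closed. What I need to extract from that remark is not merely an equality of two families of \emph{closed sets} but of the associated \emph{closure operators}: since the two families of closed sets coincide, the smallest set of either kind containing a given $B\subset F$ is the same set, so for every $B$ the norm closure of $B$ equals the relative uniform closure of $B$. In particular, writing $\overline{\spa}\,\Phi\left(\spa A\right)$ for the norm closure of $\spa\Phi\left(\spa A\right)$, it coincides with the relative uniform closure $\overline{\spa\Phi\left(\spa A\right)}$.

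For completeness I would justify the remark as follows. If $f_{\alpha}\xrightarrow{\|\cdot\|_{e}}f$, then eventually $\left|f-f_{\alpha}\right|\le\varepsilon e$, hence $\|f-f_{\alpha}\|\le\varepsilon\|e\|$, so relative uniform convergence entails norm convergence and every norm closed set is relatively uniformly closed. Conversely, if $x_{n}\to x$ in norm, I would pass to a subsequence with $\|x_{n_{k}}-x\|\le 4^{-k}$; then $e:=\sum_{k}2^{k}\left|x_{n_{k}}-x\right|$ is an absolutely convergent series with sum in $F_{+}$ and $\left|x_{n_{k}}-x\right|\le 2^{-k}e$, so $x_{n_{k}}\xrightarrow{\|\cdot\|_{e}}x$; hence a relatively uniformly closed set is norm-sequentially closed, i.e. norm closed. (Alternatively, one simply cites \cite[Theorem 15.6]{zaanen}.)

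Putting this together: by the previous corollary, $\overline{\spa\Phi\left(\spa A\right)}=\overline{\vlt\left(A\right)}$ (relative uniform closures) is the relatively uniformly closed sublattice generated by $A$ --- it is a sublattice by \cite[Theorem 63.1(ii)]{lz}, and it is norm closed by the equivalence just discussed, so it is a norm closed sublattice containing $A$. Moreover, any norm closed sublattice $S\supset A$ contains $\vlt\left(A\right)$ and, being relatively uniformly closed, contains $\overline{\vlt\left(A\right)}$. Thus $\overline{\spa}\,\Phi\left(\spa A\right)=\overline{\vlt\left(A\right)}$ is the smallest norm closed sublattice containing $A$, which is the asserted description of the closed sublattice generated by $A$ in $F$.

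I do not anticipate a genuine obstacle: the statement is essentially a translation of the previous corollary through the norm / relative-uniform dictionary, and all of that dictionary is already in place in the text. The two points worth stating carefully are the passage from ``the two families of closed sets coincide'' to ``the two closure operators coincide'', and the observation that $\overline{\vlt\left(A\right)}$ is at once a sublattice and norm closed, which is precisely what makes the minimality comparison against an arbitrary norm closed sublattice containing $A$ go through.
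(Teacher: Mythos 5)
Your proposal is correct and follows the paper's own route: the paper derives Corollary \ref{formula} exactly by combining the preceding corollary with the observation (via \cite[Theorem 15.6]{zaanen}) that in a Banach lattice a set is norm closed iff it is relatively uniformly closed, so the two closure operators agree. Your additional care about passing from equality of the families of closed sets to equality of closures, and your self-contained verification of the Zaanen fact, are sound but not needed beyond what the paper already records.
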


Let us apply the preceding result to the Banach lattice $\Co_{ph}\left(\R^{n}\right)$ of positively homogeneous continuous functions on $\R^{n}$ endowed with the supremum-norm on $\left[-1,1\right]^{n}$. Note that the topology that this norm generates agrees with the restriction of the topology on $\Co\left(\R^{n}\right)$ to $\Co_{ph}\left(\R^{n}\right)$. Let $e_{1},...,e_{n}$ be the coordinate projections on $\R^{n}$.


\begin{corollary}\label{cybph}
$\spa\Phi\left(\spa\left\{e_{1}^{\pm},...,e_{n}^{\pm}\right\}\right)$ is norm-dense in $\Co_{ph}\left(\R^{n}\right)$.
\end{corollary}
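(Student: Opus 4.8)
The plan is to combine Corollary~\ref{formula} with a density statement for the sublattice generated by $A:=\left\{e_{1}^{\pm},\ldots,e_{n}^{\pm}\right\}$. Since $\Co_{ph}\left(\R^{n}\right)$ is a Banach lattice and $A$ consists of positive elements of $\Co_{ph}\left(\R^{n}\right)$ (each $e_{i}^{+}$ and $e_{i}^{-}=\left(-e_{i}\right)^{+}$ is a nonnegative positively homogeneous continuous function), Corollary~\ref{formula} applies and gives that $\overline{\spa\,\Phi\left(\spa A\right)}$ is the closed sublattice of $\Co_{ph}\left(\R^{n}\right)$ generated by $A$, i.e. $\overline{\spa\,\Phi\left(\spa A\right)}=\overline{\vlt\left(A\right)}$. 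Hence it suffices to show that $\vlt\left(A\right)$ is norm-dense in $\Co_{ph}\left(\R^{n}\right)$.

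First I would pass to an honest space of continuous functions on a compact set. Restriction to the unit sphere, $R:\Co_{ph}\left(\R^{n}\right)\to\Co\left(S^{n-1}\right)$, $Rf:=\left.f\right|_{S^{n-1}}$, is a bijective vector-lattice homomorphism by positive homogeneity: its inverse sends $\psi$ to the positively homogeneous extension $x\mapsto\|x\|_{2}\,\psi\left(x/\|x\|_{2}\right)$, with value $0$ at the origin. Moreover $\|Rf\|_{\8}\le\|f\|_{\left[-1,1\right]^{n}}\le\sqrt{n}\,\|Rf\|_{\8}$, so $R$ is also a topological isomorphism. Thus $\vlt\left(A\right)$ is dense in $\Co_{ph}\left(\R^{n}\right)$ if and only if $\vlt\left(R\left(A\right)\right)$ is dense in $\Co\left(S^{n-1}\right)$, where $R\left(A\right)=\bigl\{x\mapsto\left(x_{i}\right)^{+},\ x\mapsto\left(-x_{i}\right)^{+}:\ i=1,\ldots,n\bigr\}$.

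To prove the latter density I would invoke the lattice version of the Stone--Weierstrass theorem. Note that $\spa R\left(A\right)\subset\vlt\left(R\left(A\right)\right)$ and that $\spa R\left(A\right)$ contains both the coordinate functions $x\mapsto x_{i}=\left(x_{i}\right)^{+}-\left(-x_{i}\right)^{+}$ and the functions $x\mapsto|x_{i}|=\left(x_{i}\right)^{+}+\left(-x_{i}\right)^{+}$. I would then check that $\spa R\left(A\right)$ interpolates arbitrary values at any pair of points of $S^{n-1}$: for distinct $p,q$ with $q\ne-p$ the vectors $p,q$ are linearly independent, so some linear functional $\sum_{i}c_{i}x_{i}\in\spa R\left(A\right)$ realizes any prescribed pair of values at $p,q$; for $q=-p$ one picks $i$ with $p_{i}\ne0$ and solves for $a,b$ so that $f:=a\,x_{i}+b\,|x_{i}|\in\spa R\left(A\right)$ satisfies $f\left(p\right)=a p_{i}+b|p_{i}|$ and $f\left(-p\right)=-a p_{i}+b|p_{i}|$ equal to the two prescribed values; and at a single point $p$ the function $\sum_{i}|x_{i}|\in\spa R\left(A\right)$ is nonzero. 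The Stone--Weierstrass theorem for lattices then yields that $\vlt\left(R\left(A\right)\right)$ is dense in $\Co\left(S^{n-1}\right)$, and unwinding the two reductions gives $\overline{\spa\,\Phi\left(\spa A\right)}=\Co_{ph}\left(\R^{n}\right)$.

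The bulk of the work has already been carried out in Corollary~\ref{formula}, so the remaining argument is largely routine; the two points that require care are the verification that $R$ is a topological lattice isomorphism onto $\Co\left(S^{n-1}\right)$ and the antipodal case $q=-p$ of the interpolation property, where linear functions alone do not suffice and one genuinely uses that the absolute-value functions $|x_{i}|$ lie in $\spa R\left(A\right)$.
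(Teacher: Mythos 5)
Your proposal is correct and follows the route the paper intends: the paper states this corollary as a direct application of Corollary~\ref{formula}, taking for granted the classical fact that $\vlt\left\{e_{1}^{\pm},\dots,e_{n}^{\pm}\right\}=\vlt\left\{e_{1},\dots,e_{n}\right\}$ is dense in $\Co_{ph}\left(\R^{n}\right)$, which you verify by the standard restriction to $S^{n-1}$ and the two-point interpolation form of the lattice Stone--Weierstrass theorem (correctly handling the antipodal case via $\left|x_{i}\right|$). Your write-up simply supplies the details the paper leaves implicit.
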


Note that in this context $\Phi\left(f\right)=\varphi\circ f$, where $\varphi:\R\to\R$ is defined by $\varphi\left(t\right)=rt^{+}+st^{-}$.

\begin{question}
What is $\overline{\spa}\left\{\varphi\circ f,~ f\in \R^{n*}\right\}$?
\end{question}

\begin{remark}
Let $E$ be a directed Archimedean ordered vector space considered as a subspace of its Dedekind completion $E^{\delta}$. Then, $E=E_{+}-E_{+}$, and so $E^{\rho\mathrm{ru}}:=\overline{\spa}\Phi\left(E\right)$ is the relative uniform completion (see \cite{bt}) of the Riesz completion of $E$ (see \cite[Section 2.4]{kv}). One can show that this is the unique relatively uniformly complete vector lattice which satisfies the following universal property: if $F$ is a relatively uniformly complete Archimedean vector lattice, and $T:E\to F$ is a Riesz* homomorphism (see \cite[Section 2.3]{kv}), there is a unique vector lattice homomorphism $T^{\rho\mathrm{ru}}:E^{\rho\mathrm{ru}}\to F$ which extends $F$ (combine \cite[Theorem 2.4.11]{kv} with \cite[Theorem 5.3]{bt}).
\qed\end{remark}

We conclude the section with a slightly more detailed exposition and a minor generalization of \cite[Example 2.4]{huijsmans}.

\begin{example}
Let $F=\Co\left(\R^{3}_{+}\right)$ and let $u,v,w:\R^{3}_{+}\to \R_{+}$ be the coordinate projections. Then, $\spa\left\{u,v,w\right\}$ consists of the restrictions of linear functions. Clearly, $g:=\left(u\wedge v - w\right)^{+}\in\vlt\left\{u,v,w\right\}$. Let us show that $g\notin \spa\Phi\left(\spa \left\{u,v,w\right\}\right)$\linebreak $=\spa\left(\varphi\circ\spa \left\{u,v,w\right\}\right)$, where $\varphi\left(t\right)=rt^{+}+st^{-}$. Assume the contrary. Then, grouping together terms which come from proportional linear functions one can show that $g=\sum\left(r_{i}e_{i}^{+}-s_{i}e_{i}^{-}\right)$, where $e_{i}$'s are restrictions of non-proportional linear functions and $r_{i},s_{i}\in\R$. Expanding further yields $$g=\sum\left(\left(r_{i}-s_{i}\right)e_{i}^{+}+s_{i}\left(e_{i}^{+}-e_{i}^{-}\right)\right)=\sum\left(\left(r_{i}-s_{i}\right)e_{i}^{+}+s_{i}e_{i}\right)=h+\sum\pm f_{i}^{+},$$ where $h$ is linear, and $f_{i}$'s are linear and mutually non-proportional. It follows from this representation that $g$ is not differentiable at each point where exactly one of $f_{i}$'s vanishes, and differentiable outside of $\bigcup\ker f_{i}$.

Let $L:=\left\{\left(1-2t,t,t\right),~ 0<t<\frac{1}{3}\right\}$ and let $M:=\left\{\left(1-2t,t,t\right),~ \frac{1}{3}<t<\frac{1}{2}\right\}$, which are parts of a line segment in $\R^{3}_{+}$. Note that $g$ is not differentiable at every point of $L$. Indeed, for $0<s<1-3t$ we have  $\frac{1}{s}\left(g\left(1-2t,t+s,t\right)-g\left(1-2t,t,t\right)\right)=\frac{1}{s}\left(s-0\right)=1$, while for $-t<s<0$ we have  $\frac{1}{s}\left(g\left(1-2t,t+s,t\right)-g\left(1-2t,t,t\right)\right)=\frac{1}{s}\left(0-0\right)=0$. Hence, $L\subset \bigcup\ker f_{i}$. In fact, if two points of $L$ are contained in the plane $\ker f_{i}$, for some $i$, then the entire segment is contained in $\ker f_{i}$, including $M$.

On the other hand, $g$ vanishes on a neighborhood of every point of $M$, and so it is differentiable there. This means that $M$ has to be covered by $\bigcup\limits_{j\ne i}\ker f_{j}$. However, since $f_{i}$'s are non-proportional, it follows that the pairwise intersections of their kernels are contained in rays emanating from the origin, and so cannot collectively cover $M$. Contradiction.
\qed\end{example}

\subsection{Other applications}\label{suboth}

In this subsection we gather some ``side applications'' of the Universal Approximation Theorem. In Section 2 of \cite{AT:17} the authors proved that the space of piecewise affine functions $\vlt\left(\Aff\left(\mathbb{R}^{m}\right)\right)$ is $\sigma$-order dense in $\Co\left(\R^{m}\right)$. Below we strengthen that result and present a vector valued extension.

For a Tychonoff space $X$ we denote the collection of all continuous functions on $X$ which vanish outside of a compact set by $\Co_{00}\left(X\right)$. Recall that $X$ is called \emph{hemi-compact} if there is a sequence $\left\{K_{n}\right\}_{n\in\N}$ of compact sets in $X$ such that every compact $K\subset X$ is contained in $K_{n}$, for some $n\in\N$. If $X$ is hemi-compact, and $Y$ is a metric space, then $\Co\left(X,Y\right)$ equipped with the compact open topology is metrizable (see \cite[Exercise 4.2.H]{engelking}). Every locally compact $\sigma$-compact space is hemi-compact (see \cite[Exercise 3.8.C]{engelking}), and if $Y$ is a complete metric space, then $\Co\left(X,Y\right)$ is completely metrizable (see \cite[Exercise 4.3.F]{engelking}).

\begin{proposition}\label{c00}
If $X$ is locally compact and $\sigma$-compact and $E$ is a dense sublattice of $\Co_{00}\left(X\right)$, then for every $f\in \Co\left(X\right)_{+}$ there is an increasing sequence in $E\cap\left[\0,f\right]$ which is $\mathrm{ru}$-convergent to $f$.
\end{proposition}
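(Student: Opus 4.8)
\emph{Plan.} I would first reduce the statement to a uniform-on-compacta approximation, then observe that, thanks to the directedness of $E\cap[\0,f]$, a Dini-type argument reduces everything to a pointwise claim, which is the real content.

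\emph{Set-up and reduction.} We may assume $f\ne\0$. Fix a compact exhaustion $K_{1}\subset\Int K_{2}\subset K_{2}\subset\cdots$ with $\bigcup_{n}K_{n}=X$, available because $X$ is locally compact and $\sigma$-compact. Using normality together with $\sigma$-compactness, choose $u\in\Co(X)_{+}$ with $u\ge\1$ and $u\ge m\,f$ on $X\setminus K_{m}$ for every $m$ (a locally finite sum of Urysohn functions, each scaled by $f+\1$, works), and set $e:=f+u$. Then $e\ge\1$, $f\le e$, so $f\in\Co\left(X\right)_{e}$, and $f\le\varepsilon e$ on $X\setminus K_{m}$ whenever $1/m\le\varepsilon$. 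With this $e$ it suffices to produce an increasing sequence $(f_{n})\subset E\cap[\0,f]$ with $\|f-f_{n}\|_{\infty,K_{m}}\to 0$ for every $m$: given such $(f_{n})$ and $\varepsilon>0$, pick $m$ with $1/m\le\varepsilon$; for all large $n$ one has $f-f_{n}\le\varepsilon\le\varepsilon e$ on $K_{m}$ (as $e\ge\1$) and $0\le f-f_{n}\le f\le\varepsilon e$ on $X\setminus K_{m}$, so $\|f-f_{n}\|_{e}\le\varepsilon$, i.e. $f_{n}\xrightarrow[]{\|\cdot\|_{e}}f$.

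\emph{A Dini-type step.} The set $\mathcal{D}:=E\cap[\0,f]$ contains $\0$ and is upward directed: if $h_{1},h_{2}\in\mathcal{D}$ then $h_{1}\vee h_{2}\in E$ (sublattice) and $\0\le h_{1}\vee h_{2}\le f$. Assume for the moment the pointwise identity $\sup\{h(x):h\in\mathcal{D}\}=f(x)$ for all $x\in X$ $(\ast)$. Then for a compact $K\subset X$ and $\varepsilon>0$: for each $x\in K$ pick $h_{x}\in\mathcal{D}$ with $h_{x}(x)>f(x)-\varepsilon$, so $h_{x}>f-\varepsilon$ on a neighbourhood of $x$ by continuity; extract a finite subcover of $K$ and let $h$ be the corresponding finite supremum of those $h_{x}$'s, so $h\in\mathcal{D}$ and $0\le f-h<\varepsilon$ on $K$. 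Applying this with $K=K_{n}$, $\varepsilon=1/n$ gives $f_{n}'\in\mathcal{D}$ with $\|f-f_{n}'\|_{\infty,K_{n}}\le 1/n$; then $f_{n}:=f_{1}'\vee\cdots\vee f_{n}'\in\mathcal{D}$ is increasing and, since $f_{n}\ge f_{n}'$, satisfies $\|f-f_{n}\|_{\infty,K_{m}}\le 1/n$ for $n\ge m$. This completes the reduction.

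\emph{The crux: proving $(\ast)$.} Fix $x_{0}\in X$; if $f(x_{0})=0$ every $h\in[\0,f]$ vanishes at $x_{0}$, so let $c:=f(x_{0})>0$ and $0<\varepsilon<c$. Choose a neighbourhood $W\ni x_{0}$ with compact closure on which $f>c-\tfrac\varepsilon2$, and $\psi\in\Co_{00}(X)$ with $0\le\psi\le1$, $\psi(x_{0})=1$, $\supp\psi\subset W$; then $g:=(c-\varepsilon)\psi\in\Co_{00}(X)_{+}$ has $g\le f$ (on $\supp\psi$, $g\le c-\varepsilon<f$; elsewhere $g=\0$) and $g(x_{0})=c-\varepsilon$. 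By density of $E$ in $\Co_{00}(X)$ pick $p\in E$ with $\|p-g\|_{\infty}<\eta$ for a small $\eta$, and set $q:=p^{+}\in E$, so $\0\le q\le g+\eta$ with $q$ within $\eta$ of $g$; then pick $\chi\in E$ with $\1\le\chi\le 3$ on the compact set $\supp q\cup\supp\psi$ (approximate a cutoff equal to $2$ on that set and take a positive part). Put $h:=\bigl(q-\eta\chi\bigr)^{+}\in E$. Off $\supp\psi$ one has $q\le\eta\le\eta\chi$ where $q>0$ and $q=\0$ otherwise, so $h=\0$; on $\supp\psi$, $q\le g+\eta\le f-\tfrac\varepsilon2+\eta$ and $\chi\ge\1$, so $h\le(f-\tfrac\varepsilon2)^{+}\le f$; hence $\0\le h\le f$, i.e. $h\in\mathcal{D}$. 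Finally $h(x_{0})\ge (c-\varepsilon-\eta-3\eta)^{+}>c-2\varepsilon$ for $\eta$ small. As $\varepsilon>0$ was arbitrary, $\sup\mathcal{D}(x_{0})\ge f(x_{0})$; the reverse inequality is clear, so $(\ast)$ holds.

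\emph{Main obstacle.} The only non-routine part is $(\ast)$: a density approximant of $g$ is not \emph{a priori} below $f$, and it must be corrected so as to lie \emph{globally} inside the order interval $[\0,f]$ — in particular to vanish on $\{f=0\}$ and to remain $\le f$ on the possibly non-relatively-compact region where $f$ is small — without destroying its value near $x_{0}$; this is precisely what the gap $g<f$ near $x_{0}$ and the subtraction of $\eta\chi$ are designed to achieve. Everything else — the exhaustion and the weight $e$, the Dini-type argument, and the passage to running maxima — is standard.
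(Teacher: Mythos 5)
Your overall architecture is genuinely different from the paper's and much of it is sound. The explicit regulator $e=f+u$ with $u\ge\1$ and $u\ge mf$ off $K_{m}$ correctly converts uniform-on-compacta convergence of a sequence in $\left[\0,f\right]$ into $\|\cdot\|_{e}$-convergence (the paper instead approximates $f$ by elements of $\Co_{00}\left(X\right)\cap\left[\0,f\right]$ and invokes \cite[Theorem 15.6]{zaanen} to extract an ru-convergent subsequence), and the Dini-type argument using the upward directedness of $E\cap\left[\0,f\right]$ is a clean way to produce the increasing sequence. So the reduction to the pointwise statement $(\ast)$ is fine.

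The gap is in your proof of $(\ast)$, at the step ``pick $p\in E$ with $\|p-g\|_{\infty}<\eta$.'' The paper's standing convention is that $\Co\left(X\right)$, hence its subspace $\Co_{00}\left(X\right)$, carries the compact-open topology, and this is the density that is actually available in the intended application (Corollary \ref{vladimir} only establishes that $\vlt\left(\Aff\left(\R^{m}\right)\right)\cap\Co_{00}\left(\R^{m}\right)$ is dense in the compact-open sense). With compact-open density you only get $\|p-g\|_{L}<\eta$ on a prescribed compact set $L$; outside $L$ the approximant $p$ (hence $q=p^{+}$, whose support, while compact, can be arbitrary) is completely uncontrolled, so the inequality $q\le\eta$ off $\supp\psi$ fails, $h=\left(q-\eta\chi\right)^{+}$ need not vanish there, and $h\le f$ can fail precisely where $f$ is small or zero. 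This is exactly the difficulty the paper addresses with \cite[Corollary 2.2]{erz2}: from the dense sublattice one extracts an element that agrees with the approximant on one compact set and vanishes on a disjoint compact set, and taking an infimum with its positive part confines the support of the final approximant to a prescribed relatively compact neighbourhood. Your argument needs either this interpolation tool (or some substitute forcing the approximant to vanish off a fixed compact set), or the stronger hypothesis that $E$ is uniformly dense in $\Co_{00}\left(X\right)$; as written, the correction term $\eta\chi$ cannot do this job.
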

\begin{proof}
First, assume that $f\in \Co_{00}\left(X\right)$ so that $\overline{\supp} f$ is compact. Since $X$ is locally compact there is an open $U$ which contains $\overline{\supp} f$ and such that $K:=\overline{U}$ is compact.

Fix $\varepsilon>0$. There is $e\in E$ such that $\|f-e\|_{K}\le\varepsilon$, so that $\left.f\right|_{K}-\varepsilon\left.\1\right|_{K}\le \left.e\right|_{K}\le \left.f\right|_{K}+\varepsilon\left.\1\right|_{K}$. There is $h\in E$ such that $\|h-2\varepsilon\1\|_{K}\le\varepsilon$, so that $\varepsilon\left.\1\right|_{K}\le \left.h\right|_{K}\le 3\varepsilon\left.\1\right|_{K}$. It follows that $\left.f\right|_{K}-4\varepsilon\left.\1\right|_{K}\le \left.e-h\right|_{K}\le \left.f\right|_{K}$. Since $\overline{\supp} f$ and $\overline{\supp}\left(e-h\right)^{+}\backslash U$ are two disjoint compact sets, by \cite[Corollary 2.2]{erz2} there is $g\in E$ which agrees with $\left(e-h\right)^{+}$ on the former and vanish on the latter. Thus, $u:=\left(e-h\right)^{+}\wedge g^{+}$ agrees with $\left(e-h\right)^{+}$ on $\supp f$ and vanishes on $X\backslash \supp \left(e-h\right)^{+}$ as well as on $\overline{\supp}\left(e-h\right)^{+}\backslash U$, which implies $\supp u\subset U$. Since $\left(e-h\right)^{+}$ is dominated by $f$ on $K$, it follows that $u\in\left[\0,f\right]$ and $\left\|f-u\right\|_{\8}=\left\|f-u\right\|_{K}\le 4\varepsilon$.

It is easy to see that $\Co_{00}\left(X\right)$ is dense in $\Co\left(X\right)$. Since the topology is metrizable, there is a sequence $f_{n}\in \Co_{00}\left(X\right)$, which converges to $f$. Replacing $f_{n}$ with $f\wedge f_{n}^{+}$ if needed, we may assume that $f_{n}\in\left[\0,f\right]$, for every $n\in\N$. By the previous step for every $n\in\N$ there is $e_{n}\in E\cap \left[\0, f_{n}\right]\subset \left[\0,f\right]$ such that $\|e_{n}-f_{n}\|_{\8}\le \frac{1}{n}$. Then, $e_{n}\to f$, and so by \cite[Theorem 15.6]{zaanen} there is subsequence of $\left(e_{n}\right)_{n\in\N}$ which relatively uniformly converges to $f$. Taking suprema of the first members of this subsequence makes it increasing.
\end{proof}

\begin{corollary}\label{vladimir}
For every $f\in \Co\left(\R^{m}\right)_{+}$ there is an increasing sequence in $\vlt\left(\Aff\left(\mathbb{R}^{m}\right)\right)\cap\Co_{00}\left(\R^{m}\right)\cap\left[\0,f\right]$ which is $\mathrm{ru}$-convergent to $f$.
\end{corollary}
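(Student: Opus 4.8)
The plan is to obtain the statement as a direct application of Proposition~\ref{c00}, taken with $X=\R^{m}$ (which is locally compact and $\sigma$-compact) and with the sublattice
\[
E:=\vlt\left(\Aff\left(\R^{m}\right)\right)\cap \Co_{00}\left(\R^{m}\right).
\]
Here $\vlt\left(\Aff\left(\R^{m}\right)\right)$ is a sublattice of $\Co\left(\R^{m}\right)$ and $\Co_{00}\left(\R^{m}\right)$ is a sublattice (indeed an ideal), so $E$ is again a sublattice of $\Co_{00}\left(\R^{m}\right)$. Granting that $E$ is dense in $\Co_{00}\left(\R^{m}\right)$, Proposition~\ref{c00} produces, for each $f\in\Co\left(\R^{m}\right)_{+}$, an increasing sequence in $E\cap\left[\0,f\right]$ that is $\mathrm{ru}$-convergent to $f$; since $E\cap\left[\0,f\right]=\vlt\left(\Aff\left(\R^{m}\right)\right)\cap\Co_{00}\left(\R^{m}\right)\cap\left[\0,f\right]$, this is precisely the claim. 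Thus everything comes down to the density of $E$ in $\Co_{00}\left(\R^{m}\right)$ for the compact--open topology, and this is the only nonroutine point.

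For the density I would argue in two moves. First, $\vlt\left(\Aff\left(\R^{m}\right)\right)$ is dense in $\Co\left(\R^{m}\right)$: for each $g\in\Aff\left(\R^{m}\right)$ one has $g^{+}=g\vee\0\in\vlt\left(\Aff\left(\R^{m}\right)\right)$, so, writing $\varphi\left(t\right):=t^{+}$ (a continuous non-polynomial), the set $\spa\left\{\varphi\circ g:\, g\in\Aff\left(\R^{m}\right)\right\}$ is contained in $\vlt\left(\Aff\left(\R^{m}\right)\right)$ and is dense in $\Co\left(\R^{m}\right)$ by Corollary~\ref{Eugenue}. The difficulty is that elements of $\vlt\left(\Aff\left(\R^{m}\right)\right)$ are piecewise affine and typically unbounded, so they do not lie in $\Co_{00}\left(\R^{m}\right)$ and one cannot merely restrict this statement; the real work is to truncate a global approximant to a compactly supported one while remaining inside $\vlt\left(\Aff\left(\R^{m}\right)\right)$. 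The tool for this is the piecewise-affine cutoff
\[
\beta_{R}:=\1\wedge\Bigl(2\cdot\1-\tfrac{1}{R}\|\cdot\|_{\8}\Bigr)^{+},\qquad\text{where}\quad\|\cdot\|_{\8}=\bigvee_{i=1}^{m}\bigl(e_{i}\vee(-e_{i})\bigr)\in\vlt\left(\Aff\left(\R^{m}\right)\right),
\]
$e_{1},\dots,e_{m}$ being the coordinate projections; then $\0\le\beta_{R}\le\1$, $\beta_{R}\equiv\1$ on $\left[-R,R\right]^{m}$, $\overline{\supp}\beta_{R}\subset\left[-2R,2R\right]^{m}$, and $\beta_{R}\in\vlt\left(\Aff\left(\R^{m}\right)\right)\cap\Co_{00}\left(\R^{m}\right)$.

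Second, given $h\in\Co_{00}\left(\R^{m}\right)$, a compact $K$, and $\varepsilon>0$, I would choose $R$ so large that $K\cup\overline{\supp}h\subset\left[-R,R\right]^{m}$, set $c:=\|h\|_{\8}+\varepsilon$, and use the density just established to find $p\in\vlt\left(\Aff\left(\R^{m}\right)\right)$ with $\|h-p\|_{\left[-2R,2R\right]^{m}}\le\varepsilon$. Then $e:=\bigl(p\wedge c\beta_{R}\bigr)\vee\bigl(-c\beta_{R}\bigr)$ is a lattice combination of members of $\vlt\left(\Aff\left(\R^{m}\right)\right)$, hence lies in $\vlt\left(\Aff\left(\R^{m}\right)\right)$; and since $\left|e\right|\le c\beta_{R}$ it has compact support, so $e\in E$. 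On $\left[-R,R\right]^{m}$ we have $\beta_{R}\equiv\1$ and $\left|p\right|\le\left|h\right|+\varepsilon\le c$, whence $e=p$ there, so $\|h-e\|_{K}=\|h-p\|_{K}\le\varepsilon$. This establishes the density of $E$ in $\Co_{00}\left(\R^{m}\right)$ and completes the proof. The single genuine obstacle is this truncation step — producing compactly supported approximants inside the sublattice generated by the affine functions; the remaining items (local compactness and $\sigma$-compactness of $\R^{m}$, the sublattice property of $E$, and the listed properties of $\beta_{R}$) are routine verifications.
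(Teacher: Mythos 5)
Your proposal is correct, and it shares the paper's skeleton — reduce everything to Proposition~\ref{c00} applied to $E=\vlt\left(\Aff\left(\R^{m}\right)\right)\cap\Co_{00}\left(\R^{m}\right)$ — but it verifies the one substantive hypothesis, the density of $E$, by a genuinely different route. The paper checks that $E$ \emph{strongly separates} the points of $\R^{m}$ (via the explicit function $\left(\1-\left|e_{1}\right|\vee\dots\vee\left|e_{m}\right|\right)^{+}$ and its translates/dilates) and then invokes the lattice Stone--Weierstrass-type criterion of \cite{BT:24}*{Theorem 2.1} to conclude density in $\Co\left(\R^{m}\right)$; this is two lines but rests on an external structural theorem about closures of sublattices. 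You instead get density of $\vlt\left(\Aff\left(\R^{m}\right)\right)$ from Corollary~\ref{Eugenue} with $\varphi=\mathrm{ReLU}$ (noting $\varphi\circ g=g\vee\0$ lies in the sublattice) and then perform an explicit truncation with the piecewise-affine cutoff $\beta_{R}$ to land in $\Co_{00}$; your verification that $\left|e\right|\le c\beta_{R}$ forces compact support and that $e=p$ on $\left[-R,R\right]^{m}$ is correct. What your argument buys is self-containment (it uses only results proved earlier in the paper) and an explicit compactly supported approximant; what the paper's argument buys is brevity and the fact that the same separation criterion works uniformly in the more general settings of Subsection~\ref{submisc}. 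Both proofs are valid.
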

\begin{proof}
Let us show that $E:=\vlt\left(\Aff\left(\mathbb{R}^{m}\right)\right)\cap\Co_{00}\left(\R^{m}\right)$ strongly separates points of $\R^{m}$ in the sense that if $x,y\in\R^{m}$ are distinct, then there is $e\in E$ such that $e\left(x\right)=1$ and $e\left(y\right)=0$. Indeed, without loss of generality we may assume that $x=0_{\R^{m}}$ and the $\ell_{\8}$ norm of $y$ is greater than $1$. Let $e_{1},...,e_{m}$ be the coordinate projections on $\R^{m}$. Then, $e:=\left(\1-\left|e_{1}\right|\vee...\vee \left|e_{m}\right|\right)^{+}$ satisfies the requirements. According to \cite[Theorem 2.1]{BT:24} we have that $E$ is dense in $\Co\left(\R^{m}\right)$, and so the claim follows from Proposition \ref{c00}.
\end{proof}

\begin{theorem}\label{vladimir2}
Let $E$ be a locally convex space, let $F$ be a Banach lattice, and let $\Phi:F\to F$ be defined by $\Phi\left(f\right)=rf^{+}+sf^{-}$, for some $r,s\in\R$ with $s\ne -r$. Then, $\spa\left\{\Phi\circ \mathbf{g},~ \mathbf{g}\in \Aff\left(E,F\right)\right\}$ is dense in $\Co\left(E,F\right)$. In particular, $\vlt\left(\Aff\left(E,F\right)\right)$ is dense in $\Co\left(E,F\right)$.
\end{theorem}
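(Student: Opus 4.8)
The plan is to reduce everything to Corollary \ref{foivos} by replacing the vector-valued nonlinearity $\Phi$ with a scalar nonlinearity acting through positive ``rank-one'' affine maps. First I would record that the scalar function $\varphi:\R\to\R$, $\varphi\left(t\right):=rt^{+}+st^{-}$, is continuous (the two linear pieces agree at $0$) and, because $s\ne -r$, non-polynomial: a polynomial coinciding with $rt$ on $\left[0,\8\right)$ must equal $rt$, and one coinciding with $-st$ on $\left(-\8,0\right]$ must equal $-st$, so agreement would force $r=-s$.

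The key observation is that affine maps of the special form $x\mapsto g\left(x\right)f_{0}$, with $g\in\Aff\left(E\right)$ scalar and $f_{0}\in F_{+}$, already suffice. Such a map $\mathbf{g}$ lies in $\Aff\left(E,F\right)$, and for every $x\in E$, since $f_{0}\ge \0$ we have $\mathbf{g}\left(x\right)^{+}=g\left(x\right)^{+}f_{0}$ and $\mathbf{g}\left(x\right)^{-}=g\left(x\right)^{-}f_{0}$; hence $\left[\Phi\circ\mathbf{g}\right]\left(x\right)=\varphi\left(g\left(x\right)\right)f_{0}$, i.e. $\Phi\circ\mathbf{g}=\left(\varphi\circ g\right)\otimes f_{0}$. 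Therefore $\spa\left\{\Phi\circ\mathbf{g},~\mathbf{g}\in\Aff\left(E,F\right)\right\}$ contains $\spa\left\{\left(\varphi\circ g\right)\otimes f_{0},~g\in\Aff\left(E\right),~f_{0}\in F_{+}\right\}$, and since $F=F_{+}-F_{+}$ this equals $\spa\left(\left\{\varphi\circ g,~g\in\Aff\left(E\right)\right\}\otimes F\right)$, which is dense in $\Co\left(E,F\right)$ by Corollary \ref{foivos}. Consequently the larger span is dense as well, which gives the main assertion.

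For the ``in particular'' clause I would first note that $\Co\left(E,F\right)$ is a vector lattice under the pointwise operations: because the lattice operations of the Banach lattice $F$ are continuous, for $\mathbf{g}\in\Co\left(E,F\right)$ the map $x\mapsto\mathbf{g}\left(x\right)^{+}$ again lies in $\Co\left(E,F\right)$ and is the supremum of $\mathbf{g}$ and $\0$ there. In particular, for every $\mathbf{g}\in\Aff\left(E,F\right)$ we have $\Phi\circ\mathbf{g}=r\mathbf{g}^{+}+s\mathbf{g}^{-}\in\vlt\left(\Aff\left(E,F\right)\right)$, so $\vlt\left(\Aff\left(E,F\right)\right)$ contains the span from the previous paragraph and hence is dense in $\Co\left(E,F\right)$.

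I do not expect a genuine obstacle here; the argument is essentially a change of viewpoint. The only points requiring care are the sign bookkeeping $\mathbf{g}\left(x\right)^{\pm}=g\left(x\right)^{\pm}f_{0}$, which holds precisely because $f_{0}$ is chosen in $F_{+}$, and the continuity of $x\mapsto\mathbf{g}\left(x\right)^{+}$, both of which are immediate from the order-continuity of the lattice operations in a Banach lattice.
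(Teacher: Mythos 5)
Your proposal is correct and follows essentially the same route as the paper: both reduce to Corollary \ref{foivos} via the identity $\Phi\circ\left(g\otimes f_{0}\right)=\left(\varphi\circ g\right)\otimes f_{0}$ for $f_{0}\in F_{+}$ together with the decomposition $f=f^{+}-f^{-}$. Your write-up is in fact slightly more careful than the paper's in that it explicitly checks that $\varphi$ is non-polynomial when $s\ne -r$ and spells out the lattice structure of $\Co\left(E,F\right)$ for the ``in particular'' clause (only note that the relevant fact is norm-continuity, not order-continuity, of the lattice operations in $F$).
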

\begin{proof}
In light of Corollary \ref{foivos} it suffices to show that $\varphi\circ g\otimes f\in \spa\left\{\Phi\circ \mathbf{g},~ \mathbf{g}\in \Aff\left(E,F\right)\right\}$, for every $g\in \Aff\left(E\right)$, and $f\in F$, where $\varphi\left(t\right)=rt^{+}+st^{-}$. Indeed, evaluating at each $e\in E$ and some simple computations yield $$\varphi\circ g\otimes f=\varphi\circ g\otimes f^{+}-\varphi\circ g\otimes f^{-}=\Phi\circ\left( g\otimes f^{+}\right)-\Phi\circ\left( g\otimes f^{-}\right);$$ as $g\otimes f^{\pm}\in \Aff\left(E,F\right)$, the result follows.
\end{proof}

\begin{remark}
The preceding theorem may be viewed as a vector valued version of Corollary \ref{Eugenue}. We remark that the reason we only consider $\Phi$ of a very special form is that we are interpreting functions from $\R$ to $\R$ as self-maps of $F$ through the functional calculus. For a general Banach lattice the only functional calculus available is by continuous positively homogeneous functions. A continuous positively homogeneous function on $\R$ which is not a polynomial is always of the form $\varphi\left(t\right)=rt^{+}+st^{-}$, for some $r,s\in\R$ with $s\ne -r$. It seems plausible that the density result akin to the one above holds for a wider class of maps $\Phi$, but some other methods are needed to attack this problem.
\qed\end{remark}

The second ``side application'' of the Universal Approximation Theorem is an alternative proof of \cite[Theorem 1.1]{sw} and its minor generalization (see also \cite{bhs} and the references therein for similar results).

\begin{corollary}\label{sw}
Let $X$ be a compact Hausdorff space, and let $F$ be a linear subspace of $\Co\left(X\right)$ which contains $\1$. Let $U\subset\R$ be an open interval, and let $\varphi\in\Co\left(U\right)\backslash \Aff\left(U\right)$ be such that $\varphi\circ f\in F$, for every $f\in F$ with $f\left(X\right)\subset U$. Then, $\overline{F}$ is a subalgebra and a sublattice of $\Co\left(X\right)$. It is dense whenever $F$ separates points of $X$.
\end{corollary}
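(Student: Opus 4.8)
The plan is to reduce the entire statement to the single claim that $f^{2}\in\overline{F}$ for every $f\in F$; granting this, the subalgebra, sublattice, and density assertions all follow from routine Stone--Weierstrass type arguments. To prove the claim, fix $f\in F$ and set $K_{f}:=f\left(X\right)\subset\R$, which is compact. Since $\1\in F$ and $f\left(X\right)$ is bounded, the set $W:=\left\{g\in\Aff\left(\R\right):~g\left(K_{f}\right)\subset U\right\}$ is a nonempty open subset of $\Aff\left(\R\right)\cong\R^{2}$: it contains all affine maps $t\mapsto at+b$ with $\left|a\right|$ small and $b$ near a fixed $b_{0}\in U$. For $g\in W$ the function $g\circ f$ is an affine combination of $f$ and $\1$, hence lies in $F$, and $\left(g\circ f\right)\left(X\right)=g\left(K_{f}\right)\subset U$, so by hypothesis $\varphi\circ\left(g\circ f\right)=\left(\varphi\circ g|_{K_{f}}\right)\circ f\in F$. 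The composition operator $C_{f}:\Co\left(K_{f}\right)\to\Co\left(X\right)$, $h\mapsto h\circ f$, is linear and continuous (in fact isometric, as $f\left(X\right)=K_{f}$), so $C_{f}\bigl(\overline{\spa}\left\{\varphi\circ g|_{K_{f}}:~g\in W\right\}\bigr)\subset\overline{F}$; since $C_{f}$ sends the squaring function to $f^{2}$, it suffices to show that the restriction of $t\mapsto t^{2}$ to $K_{f}$ lies in $\overline{\spa}\left\{\varphi\circ g|_{K_{f}}:~g\in W\right\}$.

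For this last point I would split into cases. If $\varphi$ is not a polynomial, then by Theorem~\ref{lybenko} (applied with $K=K_{f}$) that span is already dense in all of $\Co\left(K_{f}\right)$. If $\varphi$ is a polynomial, then $\deg\varphi=d\ge 2$ because $\varphi\notin\Aff\left(U\right)$; choose $b_{0}\in U$ and $a>0$ small enough that $af+b\1$ has range inside $U$ for every $b$ in some open interval $I\ni b_{0}$, so that $\varphi\left(af+b\1\right)\in F$ for all $b\in I$. Writing $\varphi\left(af+b\1\right)=\sum_{j=0}^{d}q_{j}\left(b\right)f^{j}$, where $q_{j}$ is a polynomial in $b$ with $\deg q_{j}=d-j$ and hence $q_{0},\dots,q_{d}$ are linearly independent, one sees that the vectors $\left(q_{0}\left(b\right),\dots,q_{d}\left(b\right)\right)$, $b\in I$, span $\R^{d+1}$, whence $\spa\left\{\varphi\left(af+b\1\right):~b\in I\right\}=\spa\left\{\1,f,\dots,f^{d}\right\}\ni f^{2}$, and in fact $f^{2}\in F$. (Alternatively, one may avoid the case distinction by rerunning the convolution-smoothing argument in the proof of Theorem~\ref{lybenko} together with Lemma~\ref{cybe}, which shows in one stroke that the restriction of $t\mapsto t^{2}$ to $K$ belongs to $\overline{\spa}\left\{\varphi\circ g:~g\in\Aff\left(K\right),~g\left(K\right)\subset U\right\}$ whenever $\varphi\notin\Aff\left(U\right)$.)

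With $f^{2}\in\overline{F}$ in hand, the remaining steps are standard. Squaring is continuous on $\Co\left(X\right)$ since $\|h^{2}-k^{2}\|_{\8}\le\|h+k\|_{\8}\|h-k\|_{\8}$, so $\overline{F}$ is a closed subspace stable under squaring; by the polarization identity $hk=\tfrac12\left(\left(h+k\right)^{2}-h^{2}-k^{2}\right)$ it is then a subalgebra, and it contains $\1$. A norm-closed subalgebra of $\Co\left(X\right)$ containing $\1$ is automatically a sublattice: approximating $t\mapsto\left|t\right|$ uniformly on $\left[-\|h\|_{\8},\|h\|_{\8}\right]$ by polynomials gives $\left|h\right|\in\overline{F}$ for every $h\in\overline{F}$, hence $h\vee k=\tfrac12\left(h+k+\left|h-k\right|\right)$ and $h\wedge k=\tfrac12\left(h+k-\left|h-k\right|\right)$ lie in $\overline{F}$. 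Finally, if $F$ separates points of $X$ then so does $\overline{F}$, which is a norm-closed subalgebra of $\Co\left(X\right)$ containing the constants and separating points, so $\overline{F}=\Co\left(X\right)$ by the Stone--Weierstrass theorem.

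The only genuine difficulty is the reduction step, and within it the case where $\varphi$ is a non-affine polynomial: because the hypothesis is ``$\varphi\notin\Aff\left(U\right)$'' rather than ``$\varphi$ is non-polynomial'', Theorem~\ref{lybenko} cannot be quoted verbatim, and one must either dispose of non-affine polynomials by hand (being careful that the affine maps used keep everything inside $U$) or reprove the relevant fragment of Theorem~\ref{lybenko} via the smoothing technique.
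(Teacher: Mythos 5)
Your proof is correct, and its overall skeleton matches the paper's: reduce everything to showing $F$ (or $\overline{F}$) is stable under squaring, split on whether $\varphi$ is a polynomial, invoke Theorem~\ref{lybenko} in the non-polynomial case, and finish with the standard facts that a closed unital subalgebra of $\Co\left(X\right)$ is a sublattice and with Stone--Weierstrass. The genuine difference is in the polynomial case. The paper forms the difference quotients $\varphi_{n}\left(t\right)=n\left(\varphi\left(t+\frac{1}{n}\right)-\varphi\left(t\right)\right)$, passes to the limit to get $\varphi'\circ f\in F$, and iterates until only a quadratic $\varphi^{\left(\deg\varphi-2\right)}$ remains, from which $f^{2}$ is extracted; this requires a limiting argument and some bookkeeping about the shrinking domain $\left(u,v-\frac{1}{n}\right)$. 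You instead expand $\varphi\left(af+b\1\right)=\sum_{j=0}^{d}q_{j}\left(b\right)f^{j}$ and observe that, since $\deg q_{j}=d-j$ (the leading coefficient being $a^{j}c_{d}\binom{d}{j}\ne 0$), the $q_{j}$ are linearly independent, so the curve $b\mapsto\left(q_{0}\left(b\right),\dots,q_{d}\left(b\right)\right)$ cannot lie in a proper subspace of $\R^{d+1}$ over an interval; hence $\spa\left\{\varphi\left(af+b\1\right):~b\in I\right\}=\spa\left\{\1,f,\dots,f^{d}\right\}\ni f^{2}$. This is purely algebraic, avoids any limit, and yields $f^{2}\in F$ exactly rather than in the closure. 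In the non-polynomial case you also localize differently (applying Theorem~\ref{lybenko} with $K=f\left(X\right)$ and the isometric composition operator $C_{f}$, versus the paper's fixed $K=\left[-1,1\right]$ and the unit ball of $F$), but this is only a cosmetic variation. Your closing observation is apt: the only real care needed is that the hypothesis is $\varphi\notin\Aff\left(U\right)$ rather than non-polynomiality, which is exactly why the case split (present in both proofs) is unavoidable.
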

\begin{proof}
Using continuity of the composition, it follows that $\varphi\circ f\in \overline{F}$, for every $f\in \overline{F}$ with $f\left(X\right)\subset U$. Hence, without loss of generality we may assume that $F$ is closed. Let $K:=\left[-1,1\right]\subset\R$.\medskip

\textbf{Case 1}. Assume that $\varphi$ is not a polynomial. Let $G_{\varphi}$ be as in the proof of Theorem \ref{lybenko}; by the theorem it is dense in $\Co\left(\R\right)$. If $g\in \Aff\left(\R\right)$ is such that $g\left(K\right)\subset U$ and $f\in \Bbo_{F}$, then $g\circ f\in F$, and $\left[g\circ f\right]\left(X\right)\subset U$ so that $\varphi\circ g\circ f\in F$. Hence, if $h\in G_{\varphi}$ and $f\in \Bbo_{F}$ then $h\circ f\in F$. Using continuity of the composition, it follows that if $h\in \overline{G_{\varphi}}$ and $f\in \Bbo_{F}$, then $h\circ f\in F$. In particular, since the square function is in $\overline{G_{\varphi}}$, if $e,f\in \frac{1}{2}\Bbo_{F}$, then $4ef=\left(e+f\right)^{2}-\left(e-f\right)^{2}\in F$. Finally, for general $e,f\in F$, we have $ef=\|e\|\|f\|\left(4\frac{1}{2\|e\|}e\frac{1}{2\|f\|}f\right)\in F$, and so $F$ is a subalgebra.\medskip

\textbf{Case 2}. Assume that $\varphi$ is a polynomial and represent $U$ as $\left(u,v\right)$, where $u<v$. Let $\varphi_{n}$ be defined by $\varphi_{n}\left(t\right):=n\left(\varphi\left(t+\frac{1}{n}\right)-\varphi\left(t\right)\right)$. We have that $\varphi_{n}\circ f\in F$, for every $f\in F$, with $f\left(X\right)\subset \left(u,v-\frac{1}{n}\right)$, and also $\varphi_{n}\to\varphi'$. Using continuity of composition again, we conclude that $\varphi'\circ f\in F$, for every $f\in F$ with $f\left(X\right)\subset \left(u,v\right)$ (note that since $f\left(X\right)$ is compact, $f\left(X\right)\subset \left(u,v-\frac{1}{n}\right)$ holds automatically for large enough $n$). Iterating the argument we get that $\varphi^{\left(m\right)}\circ f\in F$, for every $f\in F$ with $f\left(X\right)\subset U$ and $m\in\N$. Take $m:=\deg\varphi-2$; then $\varphi^{\left(m\right)}\left(t\right)=at^{2}+bt+c$, where $a\ne 0$ and $b,c\in\R$. It follows that if $f\in F$ satisfies $f\left(X\right)\subset U$, then $f^{2}=\frac{1}{a}\left(\varphi^{\left(m\right)}\circ f-bf-c\1\right)\in F$. For the general $f$, we have that $g:=\frac{v-u}{2\|f\|}f+\frac{v+u}{2}\1\in F$ satisfies $g\left(X\right)\subset U$, hence $g^{2}\in F$, thus $f^{2}\in F$. By the argument above we conclude that $F$ is a subalgebra.\medskip

Now every closed subalgebra of $\Co\left(X\right)$ is a sublattice (see \cite[Theorem 16.5.2]{bn}). The last claim follows from the classical Stone-Weierstrass theorem (see \cite[Theorem 16.5.7(2)]{bn}).
\end{proof}

For example, if $F\ni\1$ is such that $f^{-1}\in F$, for any non-vanishing $f\in F$, then $\overline{F}$ is a subalgebra and a sublattice of $\Co\left(X\right)$. The following version of the preceding result is proven similarly, but using Theorem \ref{cybenko} instead of Theorem \ref{lybenko}.

\begin{corollary}
Let $X$ be a Tychonoff space, and let $F$ be a linear subspace of $\Co\left(X\right)$ which contains $\1$. Let $\varphi\in\Co\left(\R\right)\backslash \Aff\left(\R\right)$ be such that $\varphi\circ f\in F$, for every $f\in F$. Then, $\overline{F}$ is a subalgebra and a sublattice of $\Co\left(X\right)$. It is dense whenever $F$ separates points of $X$.
\end{corollary}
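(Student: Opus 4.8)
The plan is to mimic the proof of Corollary~\ref{sw}, replacing the local result Theorem~\ref{lybenko} by the global Theorem~\ref{cybenko}; since now $\varphi$ and all the functions derived from it are defined on all of $\R$, the normalization steps (rescaling $f$ so that its range lands in a prescribed interval) are no longer needed and the argument actually shortens. First I would record that for fixed $f\in\Co(X)$ the left-composition map $h\mapsto h\circ f$ is continuous from $\Co(\R)$ into $\Co(X)$: if $h_{\alpha}\to h$ uniformly on compacta and $K\subset X$ is compact, then $f(K)$ is compact in $\R$, so $h_{\alpha}\circ f\to h\circ f$ uniformly on $K$. Consequently $\varphi\circ f\in\overline{F}$ for every $f\in\overline{F}$, so we may assume $F$ is closed, and it remains to prove that $F$ is a subalgebra (the lattice and density assertions follow at the end).

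If $\varphi$ is not a polynomial, Theorem~\ref{cybenko} gives that $G_{\varphi}:=\spa\{\varphi\circ g: g\in\Aff(\R)\}$ is dense in $\Co(\R)$. For $g\in\Aff(\R)$ and $f\in F$ we have $g\circ f\in F$ (using $\1\in F$), hence $\varphi\circ g\circ f\in F$, and by linearity $h\circ f\in F$ for all $h\in G_{\varphi}$. Since $\{h\in\Co(\R): h\circ f\in F\}$ is a closed linear subspace of $\Co(\R)$ containing the dense set $G_{\varphi}$, it is all of $\Co(\R)$; applying it to $h(t)=t^{2}$ yields $f^{2}\in F$, and polarization $4ef=(e+f)^{2}-(e-f)^{2}$ shows $F$ is a subalgebra.

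If instead $\varphi$ is a polynomial, it has degree $\ge 2$ since $\varphi\notin\Aff(\R)$; I would set $\varphi_{n}(t):=n(\varphi(t+\tfrac1n)-\varphi(t))$, so that $\varphi_{n}\circ f=n(\varphi\circ(f+\tfrac1n\1)-\varphi\circ f)\in F$ for every $f\in F$, while $\varphi_{n}\to\varphi'$ in $\Co(\R)$; by the continuity noted above, $\varphi'\circ f\in\overline{F}=F$. Iterating, $\varphi^{(m)}\circ f\in F$ for every $m$, and choosing $m:=\deg\varphi-2$ gives $\varphi^{(m)}(t)=at^{2}+bt+c$ with $a\ne 0$, whence $f^{2}=\tfrac1a(\varphi^{(m)}\circ f-bf-c\1)\in F$ and again $F$ is a subalgebra.

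Finally, a closed subalgebra $F\ni\1$ of $\Co(X)$ is automatically a sublattice: on each compact $K\subset X$ the function $|t|$ is a uniform limit of polynomials on the compact interval $f(K)$, so $|f|$ is approximated uniformly on $K$ by elements of $F$ and hence lies in the closed set $F$. If $F$ separates points of $X$, then for each compact $K$ the restricted algebra $F|_{K}$ contains constants and separates points, so is dense in $\Co(K)$ by the classical Stone--Weierstrass theorem; as $K$ is arbitrary, $F$ is dense in $\Co(X)$. I do not expect a genuine obstacle here: the only care needed is the passage from the sup-norm setting of Corollary~\ref{sw} to the compact--open topology, which in each step (continuity of composition, Stone--Weierstrass) reduces routinely to behaviour on compact subsets.
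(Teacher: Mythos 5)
Your proposal is correct and follows essentially the same route the paper intends: the authors state that this corollary ``is proven similarly'' to Corollary~\ref{sw} but with Theorem~\ref{cybenko} in place of Theorem~\ref{lybenko}, and your write-up is precisely that adaptation, with the correct observation that the global setting removes the need for the rescaling into a prescribed interval. All the details you supply (continuity of $h\mapsto h\circ f$ in the compact--open topology, the two cases, and the reduction of the sublattice and density claims to compact subsets) check out.
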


\subsection{Other approximation results}\label{submisc}

In this section we add some miscellaneous results, which are not directly related to the Universal Approximation Theorems, but nevertheless are similar in spirit to the material from the preceding subsection.

We start with a generalization of \cite[Lemma 3.4]{DMAZ:21}. For a vector lattice $E$ we denote the collection of all $\R$-valued vector lattice homomorphisms on $E$ by $\Hom\left(E\right)$. It is not true in general that if $X$ is a Tychonoff space, then $\Hom\left(\Co\left(X\right)\right)\subset \Co\left(X\right)^{*}$. We always have that $\Hom\left(\Co\left(X\right)^{*}\right)\cap \Co\left(X\right)^{*}$ consists of positive scalar multiples of point evaluations. Indeed, this follows from the fact that closed ideals in $\Co\left(X\right)$ of co-dimension $1$ are of the form $\left\{f\in \Co\left(X\right),~ f\left(x_{0}\right)=0\right\}$, for some $x_{0}\in X$ (see e.g. \cite[Corollary 2.3]{BT:24}).

\begin{proposition}\label{lem}
Let $E$ be a Banach lattice, let $X$ be a Tychonoff space, and let $\nu$ be a non-zero continuous lattice homomorphism on $\Co\left(X,E\right)$. Then, there are unique $x\in X$ and $\mu\in\Hom\left(E\right)$ such that $\nu\left(\mathbf{f}\right)=\mu\left(\mathbf{f}\left(x\right)\right)$, for every $\mathbf{f}\in \Co\left(X,E\right)$.
\end{proposition}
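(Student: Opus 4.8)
The idea is to reduce the vector-valued statement to the scalar description of lattice homomorphisms on $\Co\left(X\right)$, using the tensor structure $\Co\left(X\right)\otimes E$. The plan is as follows. First I would note that $\nu$ is positive (a lattice homomorphism on a vector lattice is automatically positive), so by continuity it is bounded on the positive cone, and $\nu\ne 0$ forces $\nu$ to be non-zero on $\Co\left(X,E\right)_{+}$. Fix once and for all some $e_{0}\in E_{+}$ with $\nu\left(\1\otimes e_{0}\right)>0$ (such $e_{0}$ exists: otherwise, by linearity and $E=E_{+}-E_{+}$, $\nu$ would vanish on $\Co\left(X\right)\otimes E$, which is dense by the remark preceding Lemma~\ref{dens}, contradicting $\nu\ne 0$; after rescaling assume $\nu\left(\1\otimes e_{0}\right)=1$).

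\textbf{Extracting the point $x$.} Next I would define a map $\rho:\Co\left(X\right)\to\R$ by $\rho\left(g\right):=\nu\left(g\otimes e_{0}\right)$. Since $g\mapsto g\otimes e_{0}$ is a lattice homomorphism from $\Co\left(X\right)$ into $\Co\left(X,E\right)$ (it respects $\vee$ and $\wedge$ pointwise because $e_{0}\ge 0$) and is continuous ($\|g\otimes e_{0}\|=\|g\|\|e_{0}\|$), the composition $\rho$ is a non-zero continuous lattice homomorphism on $\Co\left(X\right)$ with $\rho\left(\1\right)=1$. By the structure of such functionals quoted in the paper (closed co-dimension-one ideals in $\Co\left(X\right)$ are point ideals; equivalently $\rho$ is a positive multiple of a point evaluation, and $\rho\left(\1\right)=1$ pins down the multiple), there is a unique $x\in X$ with $\rho\left(g\right)=g\left(x\right)$ for all $g\in\Co\left(X\right)$. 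This $x$ is the point we want.

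\textbf{Extracting the homomorphism $\mu$ and collapsing to the fibre.} Now define $\mu:E\to\R$ by $\mu\left(e\right):=\nu\left(\1\otimes e\right)$. Then $\mu$ is linear, positive, continuous, and a lattice homomorphism (because $e\mapsto\1\otimes e$ is a lattice homomorphism $E\to\Co\left(X,E\right)$), so $\mu\in\Hom\left(E\right)$, with $\mu\left(e_{0}\right)=1$. The main point is to show $\nu\left(\mathbf{f}\right)=\mu\left(\mathbf{f}\left(x\right)\right)$ for every $\mathbf{f}\in\Co\left(X,E\right)$. By density of $\spa\left(\Co\left(X\right)\otimes E\right)$ and continuity of both sides, it suffices to check this on elementary tensors: $\nu\left(g\otimes e\right)=\mu\left(\left[g\otimes e\right]\left(x\right)\right)=g\left(x\right)\mu\left(e\right)$. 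To prove $\nu\left(g\otimes e\right)=g\left(x\right)\mu\left(e\right)$ I would first reduce to $g\ge 0$ (write $g=g^{+}-g^{-}$) and then to $0\le g\le\1$ by scaling. Fix $e\in E_{+}$; consider the scalar functional $g\mapsto\nu\left(g\otimes e\right)$ on $\Co\left(X\right)$. If $e$ happens to satisfy $\mu\left(e\right)\ne 0$ this is again a non-zero continuous lattice homomorphism, hence equals $\mu\left(e\right)\,\delta_{x'}$ for some $x'\in X$; evaluating against a function that is $1$ at $x$ shows $x'=x$ provided we know the two point-masses coincide, which I would get by testing with $\1\otimes e_{0}$ versus $g\otimes e_{0}$ for suitable $g$ — more cleanly, use that for $0\le g\le\1$ we have $\left(g\otimes e\right)\wedge\left(\1\otimes e_{0}\right)\le\1\otimes e_{0}$ and apply the lattice-homomorphism property of $\nu$ together with $\rho=\delta_{x}$ to squeeze $\nu\left(g\otimes e\right)$ between $\mu\left(e\right)g\left(x\right)$-type bounds. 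For $e$ with $\mu\left(e\right)=0$ I would show $\nu\left(g\otimes e\right)=0$ for all $g$ with $0\le g\le\1$ by domination: $0\le g\otimes e\le\1\otimes e$ and $\nu\left(\1\otimes e\right)=\mu\left(e\right)=0$ forces $\nu\left(g\otimes e\right)=0$ by positivity. Combining the two cases (and linearity in $e$ via $e=e^{+}-e^{-}$) gives the elementary-tensor identity, and density finishes it.

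\textbf{Uniqueness and the main obstacle.} Uniqueness of $x$ is immediate from the uniqueness in the scalar case (if $x_{1}\ne x_{2}$, pick $g$ separating them with $g\otimes e_{0}$), and uniqueness of $\mu$ follows since $\mu\left(e\right)=\nu\left(\1\otimes e\right)$ is forced by the representation. I expect the main obstacle to be the last step — verifying the identity on elementary tensors, specifically showing that the "point" seen by $g\mapsto\nu\left(g\otimes e\right)$ is the same $x$ for every $e$, independently of $e$. The cleanest route is probably to avoid invoking a separate point evaluation for each $e$ and instead argue directly with the lattice operations: for $0\le g\le\1$ and $e\in E_{+}$, one has $g\otimes e = \left(g\otimes e\right)\vee 0$ and $\left(g\otimes e\right)\wedge\left(\left(1-g\right)\otimes e\right)$-type identities; applying $\nu$ (a lattice homomorphism) and using $\rho=\delta_{x}$ on the "$g$-part" while $\mu$ handles the "$e$-part" should pin $\nu\left(g\otimes e\right)=g\left(x\right)\mu\left(e\right)$ without case analysis. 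I would also double-check that no completeness of $E$ beyond being a Banach lattice is needed — only $E=E_{+}-E_{+}$, continuity of $\nu$, and the density of $\spa\left(\Co\left(X\right)\otimes E\right)$ in $\Co\left(X,E\right)$ are used.
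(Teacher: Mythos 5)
Your overall architecture matches the paper's proof: define $\mu\left(e\right):=\nu\left(\1\otimes e\right)$, observe that for each $e\in E_{+}$ the functional $g\mapsto\nu\left(g\otimes e\right)$ is a continuous lattice homomorphism on $\Co\left(X\right)$ and hence of the form $r_{e}\delta_{x_{e}}$ with $r_{e}=\mu\left(e\right)$, handle $e\in\ker\mu$ by domination, and finish by density of $\spa\left(\Co\left(X\right)\otimes E\right)$ and continuity. The uniqueness discussion and the existence of $e_{0}$ are also fine. However, there is a genuine gap at exactly the step you flag as the main obstacle: showing that the point $x_{e}$ does not depend on $e$. Neither of the mechanisms you sketch closes it. The element $\left(g\otimes e\right)\wedge\left(\1\otimes e_{0}\right)$ is the function $y\mapsto g\left(y\right)e\wedge e_{0}$, which is not an elementary tensor, and applying $\nu$ only yields $\mu\left(e\right)g\left(x_{e}\right)\wedge 1$ --- an identity that holds for any value of $x_{e}$ and so cannot tie $x_{e}$ to $x$. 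Likewise, lattice identities involving a single $e$, such as $\left(g\otimes e\right)\wedge\left(\left(\1-g\right)\otimes e\right)=\left(g\wedge\left(\1-g\right)\right)\otimes e$, are automatically consistent with $\nu\left(\cdot\otimes e\right)=\mu\left(e\right)\delta_{x_{e}}$ for an arbitrary $x_{e}$; the ``$g$-part'' and the ``$e$-part'' never interact in a way that forces $x_{e}=x$.

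The missing idea is to play two \emph{different} positive vectors against each other via disjointness, which is what the paper does. If $e,h\in E_{+}$ satisfy $x_{e}\ne x_{h}$, choose disjoint $f,g\in\Co\left(X\right)_{+}$ with $f\left(x_{e}\right)=1=g\left(x_{h}\right)$. Since $f\wedge g=\0$, at every point one of $f\left(y\right)e$, $g\left(y\right)h$ vanishes, so $f\otimes e\perp g\otimes h$ in $\Co\left(X,E\right)$; as $\nu$ is a real-valued lattice homomorphism, $\nu\left(f\otimes e\right)\wedge\nu\left(g\otimes h\right)=0$, i.e.\ one of $\mu\left(e\right)$, $\mu\left(h\right)$ is zero. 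Hence all $e\in E_{+}\setminus\ker\mu$ share a common point $x$, and the rest of your argument goes through. With this disjointness step inserted, your proof becomes essentially the paper's.
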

\begin{proof}
It is easy to see that the correspondence $E\ni e\mapsto \nu\left(\1\otimes e\right)$ is a lattice homomorphism on $E$; call it $\mu$. For every $e\in E_{+}$ the correspondence $\Co\left(X\right)\ni f\mapsto \nu\left(f\otimes e\right)$ is a continuous lattice homomorphism on $\Co\left(X\right)$, and so there are $r_{e}\ge 0$ and $x_{e}\in X$ such that $\nu\left(f\otimes e\right)=r_{e}f\left(x_{e}\right)$, for every $f\in\Co\left(X\right)$. Moreover, if $r_{e}\ne 0$, then $x_{e}$ is uniquely determined. It also follows that $\mu\left(e\right)=\nu\left(\1\otimes e\right)=r_{e}\1\left(x_{e}\right)=r_{e}$, for every $e\in E_{+}$.

Assume that $e,h\in E_{+}$ are such that $x_{e}\ne x_{h}$. Find disjoint $f,g\in\Co\left(X\right)$ such that $f\left(x_{e}\right)=1=g\left(x_{h}\right)$. Then, $f\otimes e\bot g\otimes h$, hence one of $\nu\left(f\otimes e\right)=\mu\left(e\right)f\left(x_{e}\right)=\mu\left(e\right)$ and $\nu\left(g\otimes h\right)=\mu\left(h\right)$ is equal to $0$. Therefore, if $e,h\in E_{+}\backslash\ker\mu$ then $x_{e}=x_{h}$. Thus, there is a unique $x\in X$ such that $x_{e}=x$, for every $e\in E_{+}\backslash\ker\mu$.

We now have that $\nu\left(f\otimes e\right)=\mu\left(e\right)f\left(x\right)$, for every $f\in\Co\left(X\right)$ and $e\in E_{+}\backslash\ker\mu$. On the other hand, if $e\in \ker \mu$, then $\nu\left(f\otimes e\right)=\mu\left(e\right)f\left(x_{e}\right)=0=\mu\left(e\right)f\left(x\right)$. It follows that $\nu$ agrees with $\mu\circ\delta_{x}$ on $\spa\left(\Co\left(X\right)\otimes E_{+}\right)$, which is dense in $\Co\left(X,E\right)$, according to Lemma \ref{dens}. Since both functionals are continuous, we conclude that they are equal. Uniqueness follows from the construction.
\end{proof}

Let $F$ be a vector lattice. We will say that $A\subset F$ is an \emph{AM-set} if it is solid and $f,g\in A_{+}$ $\Rightarrow$ $f\vee g\in A$. Note that every AM-set is convex: if $f,g\in A$ and $0\le t\le 1$, then $\left|tf+\left(1-t\right)g\right|\le \left|f\right|+\left(1-t\right)\left|g\right|\le \left|f\right|\vee \left|g\right|\in A$.

We will call a linear topology on $F$ an \emph{AM-topology} if it has a local base at $0_{F}$ consisting of AM-sets. Clearly, such a topology is locally convex. Taking Minkowski functionals of the AM-sets in the base yields a collection of AM-seminorms which generates the topology (we will say that a solid seminorm $\rho$ on $F$ is an \emph{AM-seminorm} if $\rho\left(f\vee g\right)=\rho\left(f\right)\vee \rho\left(g\right)$, for all $f,g\in F_{+}$). Conversely, any topology generated on $F$ by a collection of AM-seminorms is an AM-topology.

In the next result we will use Arzela-Ascoli theorem (see \cite[Theorem 0.4.11]{edwards} and \cite[Theorem 8.2.10]{engelking}), which partially requires the underlying topological space to be \emph{compactly generated} (see \cite[Section 3.3]{engelking}). We will not go into details regarding this class of spaces, it suffices to mention that every sequential (including metrizable) Tychonoff space is compactly generated.

\begin{proposition}\label{am}
Let $F$ be a vector lattice endowed with a Hausdorff AM-topology, which is also compactly generated. Then, $F$ embeds algebraically and topologically as a sublattice of $\Co\left(X\right)$, for a Tychonoff space $X$.
\end{proposition}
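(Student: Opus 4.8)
The plan is to produce the space $X$ as a space of continuous lattice homomorphisms on $F$, equipped with a suitable topology, and to realize $F$ inside $\Co(X)$ via the canonical evaluation map $f\mapsto\widehat{f}$, where $\widehat f(\xi):=\xi(f)$. First I would fix a generating family $\{\rho_i\}_{i\in I}$ of AM-seminorms for the topology, obtained as Minkowski functionals of a base of AM-sets at $0_F$, as indicated in the paragraph preceding the statement. For each $i$, the completion (or just the quotient/normed reduction) $F/\ker\rho_i$ with the norm induced by $\rho_i$ is an AM-normed lattice; by the classical Kakutani–Bohnenblust–Krein representation, its completion is lattice-isometric to a sublattice of $\Co(K_i)$ for some compact Hausdorff $K_i$, with the $\rho_i$-norm going to the sup-norm. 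Composing $F\to F/\ker\rho_i\hookrightarrow \Co(K_i)$ gives, for each $i$, a lattice homomorphism $F\to\Co(K_i)$ under which $\rho_i$ equals the pull-back of $\|\cdot\|_\infty$; equivalently, a family of continuous lattice homomorphisms $\{\xi\}$ on $F$ indexed by $\bigsqcup_i K_i$. Let $X_0\subset F'$ be the set of all continuous $\R$-valued lattice homomorphisms on $F$ arising this way (or simply all of $\Hom(F)\cap F'$), and give $X_0$ the weak topology induced by $F$, i.e. the coarsest topology making every $\widehat f$ continuous.

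Next I would check that the evaluation map $J:F\to\Co(X_0)$, $Jf=\widehat f$, is an injective lattice homomorphism which is a topological embedding. Injectivity and the lattice-homomorphism property are immediate from the fact that the $\rho_i$ jointly separate points (Hausdorffness) and that each point of $X_0$ is a lattice homomorphism. For the topological statement one shows that for each $i$, $\rho_i(f)=\sup\{|\xi(f)|:\xi\in S_i\}$ where $S_i\subset X_0$ is the (weak-) compact set coming from $K_i$ in the previous step; since the $\rho_i$ generate the topology of $F$ and the sets $S_i$ are compact in $X_0$, the map $J$ is a homeomorphism onto its image for the compact-open topology on $\Co(X_0)$. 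The AM property of each $\rho_i$ is exactly what makes $F/\ker\rho_i$ an $M$-normed space, so that the Kakutani-type representation is available; this is where the hypothesis is used essentially.

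The genuine obstacle is to ensure that $X_0$ (or a suitable subspace of it) is itself a Tychonoff space and, more delicately, that $J$ has \emph{closed} range or at least that the compact-open topology on $\Co(X_0)$ restricts correctly to $F$ — i.e. that no extra continuous functions are needed and that compact subsets of $X_0$ are controlled by the $S_i$. Being a subspace of the locally convex space $F'$ with a weak topology, $X_0$ is automatically completely regular Hausdorff, hence Tychonoff, so that part is routine. The substantive point is the matching of topologies, and here the compactly-generated hypothesis on $F$ enters through Arzelà–Ascoli: one argues that a subset of $F$ is relatively compact (equivalently, the family of restrictions is equicontinuous on compacta of $X_0$) precisely when it is bounded for all $\rho_i$, which is what lets us identify the compact-open topology of $\Co(X_0)$, restricted to $J(F)$, with the original AM-topology of $F$. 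I would carry this out by first handling the case of a single AM-seminorm (pure Kakutani representation), then taking the appropriate inductive/projective amalgamation over $i\in I$, and finally invoking Arzelà–Ascoli, flagged in the paragraph before the statement, to pin down the compact sets. Replacing $X_0$ by the closure of the relevant union $\bigcup_i S_i$, or by $X_0$ itself, yields the desired Tychonoff space $X$.
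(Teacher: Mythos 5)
Your overall strategy coincides with the paper's: represent $F$ via the evaluation map $J f = \widehat f$ into continuous functions on a space of continuous lattice homomorphisms, use the Kakutani representation of the completion of $F/\ker\rho_i$ to produce, for each AM-seminorm $\rho_i$, an equicontinuous norming set $S_i$ of homomorphisms with $\rho_i(f)=\sup_{\xi\in S_i}|\xi(f)|$ (this gives injectivity and the ``embedding from below'' direction), and invoke the converse of Arzel\`a--Ascoli, via the compactly generated hypothesis, to control compact sets of homomorphisms (this gives continuity of $J$). All of these ingredients appear in the paper's proof.

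There is, however, one genuine gap: your choice of topology on $X_0$. You endow $X_0$ with the weak topology induced by $F$ (i.e.\ the topology of pointwise convergence), whereas the paper takes $X_F$ with the compact-open topology inherited from $\Co(F)$. This is not a cosmetic difference. For $J$ to be continuous into $\Co(X_0)$ with the compact-open topology, every compact $K\subset X_0$ must be equicontinuous, so that $\sup_{\xi\in K}|\xi(f)|$ is dominated by a continuous seminorm on $F$. Arzel\`a--Ascoli (and its converse, which is where compact generation of $F$ enters) identifies the \emph{compact-open}-compact subsets of $\Co(F)$ with the equicontinuous pointwise-bounded ones; it says nothing about pointwise-compact sets, and indeed a pointwise-compact set of continuous functionals need not be equicontinuous unless $F$ is barrelled (e.g.\ a weak*-null but norm-unbounded sequence of functionals on an incomplete normed lattice). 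With your $X_0$ such a set would be compact, $\sup_{\xi\in K}|\xi(\cdot)|$ would fail to be continuous, and $J$ would not map into $\Co(X_0)$ continuously. Restricting to $\overline{\bigcup_i S_i}$ does not repair this, since that union can still contain pointwise-compact non-equicontinuous subsets. The fix is exactly the paper's device: topologize the homomorphism space as a (closed) subspace of $\Co(F)$ with the compact-open topology, so that the converse Ascoli theorem applies and compactness forces equicontinuity. Relatedly, your parenthetical ``a subset of $F$ is relatively compact \ldots'' has the roles of $F$ and $X_0$ reversed: Ascoli must be applied to subsets of $\Co(F)$, i.e.\ to sets of homomorphisms, with $F$ as the (compactly generated) domain.
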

\begin{proof}
Let $X_{F}$ be the collection of all continuous lattice homomorphisms on $F$ endowed with the compact-open topology (so we consider $X_{F}$ to be a subset of $\Co\left(F\right)$; it is easy to see that it is closed). According to Arzela-Ascoli theorem if $A\subset \Co\left(F\right)$ is equicontinuous and pointwise bounded, then it is relatively compact. Note that $A\subset F^{*}$ is equicontinuous if and only if $A^{0}:=\left\{f \in F,~ \left|a\left(f\right)\right| \le 1,~ \text{for all}~ a \in A\right\}$  is a neighborhood of $0_{F}$, which also implies pointwise boundedness. Hence, if $K\subset X_{F}$ is such that $K^{0}$ is a neighborhood of $0_{F}$, then it is relatively compact. On the other hand, if $F$ is compactly generated, the converse to Arzela-Ascoli theorem also holds, and so in this case $K\subset X_{F}$ is relatively compact if and only if $K^{0}$ is a neighborhood of $0_{F}$.

Define $J:F\to  \Co\left(X_{F}\right)$ by $\left[Jf\right]\left(x\right):=x\left(f\right)$. It is easy to see that $J$ is a vector lattice homomorphism. For every compact $K\subset X_{F}$, since $K^{0}$ is a neighborhood of $0_{F}$, there is a semi-norm $\rho$ such that $\Bo_{\rho}\subset K^{0}$, where $\Bo_{\rho}$ is the open unit ball with respect to $\rho$. The last condition is equivalent to the fact that $\|Jf\|_{K}\le \rho\left(f\right)$, for every $f\in F$. Since $K$ was chosen arbitrarily, it follows that $J$ is continuous.

On the other hand, let $\rho$ be an AM-seminorm of $F$ and let $H:=\ker\rho$, which is a closed ideal. Let $Q$ be the quotient map from $F$ onto $F\slash H$. Let us show that $\rho$ induces an AM-norm on $F\slash H$ by showing that $Q\Bo_{\rho}$ is an AM-set. First, a surjective homomorphic image of a solid set is solid. Next, if $u,v\in \left(Q\Bo_{\rho}\right)_{+}=Q\Bo_{\rho}^{+}$, there are $g,h\in \Bo_{\rho}^{+}$ such that $u=Qg$ and $v=Qh$. Then, $g\vee h\in \Bo_{\rho}$, and so $u\vee v=Q\left(g\vee h\right)\in Q\Bo_{\rho}$.

Let $\|\cdot\|$ be the norm on $F\slash H$ induced by $\rho$. Note that the quotient topology on $F\slash H$ is weaker than the norm-topology, and so $Q$ is a continuous homomorphism from $F$ onto $\left(F\slash H,\|\cdot\|\right)$. The completion $F_{\rho}$ of $\left(F\slash H,\|\cdot\|\right)$ is an AM-space, and so by  \cite[Theorem 4.29]{AL:06} there exists a compact Hausdorff space $L$ and a lattice isometry $T:E \to \Co\left(L\right)$. It can be easily deduced from here that $K_{\rho}:=\Hom\left(F_{\rho}\right)\cap\Bbo_{F_{\rho}^{*}}$ is norming on $F_{\rho}$, in other words, $\|u\|=\bigvee\limits_{\nu\in K_{\rho}}\left|\nu\left(u\right)\right|$. Also note that $K_{\rho}$ is equicontinuous on $F_{\rho}$. It follows that $Q^{*}K_{\rho}$ is an equicontinuous subset of $X_{F}$ such that $\left\|Jf\right\|_{Q^{*}K_{\rho}}=\|Qf\|=\rho\left(f\right)$, for every $f\in F$. Since $\rho$ was chosen arbitrarily, and AM-seminorms generate the topology on $F$ this shows that $J$ is a topological embedding.
\end{proof}

In order to get rid of the additional assumption that the topology is compactly generated, we need to modify the only part of the argument where this assumption was used.

\begin{question}
Let $F$ be a vector lattice endowed with a Hausdorff AM-topology. Is it true that if $K_{F}\subset\Hom\left(F\right)\cap F^{*}$ is compact in $\Co\left(F\right)$, then it is equicontinuous on $F$?
\end{question}

We can now obtain a version of \cite[Theorem 2.1]{BT:24} for vector lattices with AM-topologies. Note that in the following result $F^{\bot}$ is meant inside of the continuous dual $E^{*}$.

\begin{corollary}\label{thm0}
Let $E$ be a vector lattice endowed with a Hausdorff compactly generated AM-topology, and let $F\subset E$ be a sublattice. Then, $\overline{F}=\left(F^{\bot}\cap \left(\Hom\left(E\right)-\Hom\left(E\right)\right)\right)_{\bot}$. In particular $F$ is dense if and only if it separates the points of $\Hom\left(E\right)\cap E^{*}$.
\end{corollary}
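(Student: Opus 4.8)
The plan is a bipolar-type argument. Since $\overline{F}=\left(F^{\bot}\right)_{\bot}$ always holds, and the set $\left(F^{\bot}\cap\left(\Hom\left(E\right)-\Hom\left(E\right)\right)\right)_{\bot}$ is an intersection of kernels of continuous functionals that obviously contains $F$, the inclusion $\overline{F}\subseteq\left(F^{\bot}\cap\left(\Hom\left(E\right)-\Hom\left(E\right)\right)\right)_{\bot}$ is automatic. Everything therefore reduces to the reverse inclusion, that is, to showing: for each $e_{0}\notin\overline{F}$ there is $\phi\in F^{\bot}$ which is a \emph{difference of two continuous vector lattice homomorphisms on $E$} and satisfies $\phi\left(e_{0}\right)\ne 0$ (such a $\phi$ lies in $F^{\bot}\cap\left(\Hom\left(E\right)-\Hom\left(E\right)\right)$ and witnesses $e_{0}\notin\left(F^{\bot}\cap\left(\Hom\left(E\right)-\Hom\left(E\right)\right)\right)_{\bot}$). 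The same construction will also settle the concluding equivalence.

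To produce such a $\phi$, I would first localize. A finite maximum of AM-seminorms is again an AM-seminorm, and these generate the topology, so $e_{0}\notin\overline{F}$ yields a single AM-seminorm $\rho$ and $\delta>0$ with $\rho\left(e_{0}-f\right)\ge\delta$ for all $f\in F$. Exactly as in the proof of Proposition \ref{am}, $\rho$ induces an AM-norm on $E\slash\ker\rho$; let $E_{\rho}$ be the completion (an AM-space) and $Q_{\rho}:E\to E_{\rho}$ the canonical continuous vector lattice homomorphism, which satisfies $\|Q_{\rho}e\|_{E_{\rho}}=\rho\left(e\right)$. Then $Q_{\rho}e_{0}$ lies at distance $\ge\delta$ from the sublattice $Q_{\rho}F$, so $Q_{\rho}e_{0}\notin\overline{Q_{\rho}F}$. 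Representing the AM-space $E_{\rho}$ as a closed sublattice of some $\Co\left(L\right)$ with $L$ compact Hausdorff (as inside the proof of Proposition \ref{am}), I am reduced to a sublattice $G$ of $\Co\left(L\right)$ and an element $g_{0}\in\Co\left(L\right)$ with $g_{0}\notin\overline{G}$.

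At this point I would apply the Kakutani--Stone description of the closure of a sublattice of $\Co\left(L\right)$: $g_{0}\notin\overline{G}$ forces points $z,w\in L$ for which $\left(g_{0}\left(z\right),g_{0}\left(w\right)\right)$ lies outside the closure $S$ of $\left\{\left(g\left(z\right),g\left(w\right)\right):g\in G\right\}$ in $\R^{2}$. Being the image of the sublattice $G$ under an evaluation lattice homomorphism, $S$ is a closed sub-vector-lattice of $\R^{2}$, hence one of $\left\{0\right\}$, a line $\left\{y=ax\right\}$ with $a\ge 0$, the vertical axis, or $\R^{2}$; the last is ruled out because it contains $\left(g_{0}\left(z\right),g_{0}\left(w\right)\right)$. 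In each remaining case one of the point evaluations $\delta_{z}$, $\delta_{w}$, or the functional $\delta_{w}-a\delta_{z}$ with $a>0$ vanishes on $G$ but not at $g_{0}$; such a functional, being a difference of positive multiples of point evaluations on $\Co\left(L\right)$, is a difference of two lattice homomorphisms. Composing with $Q_{\rho}$ gives the required $\phi=\phi_{0}\circ Q_{\rho}\in E^{*}$. For the concluding equivalence: by the identity just proved, $F$ is dense iff $F^{\bot}\cap\left(\Hom\left(E\right)-\Hom\left(E\right)\right)=\left\{0\right\}$; if so, two distinct $x,y\in\Hom\left(E\right)\cap E^{*}$ give a nonzero $x-y$ lying in $\Hom\left(E\right)-\Hom\left(E\right)$, hence $x-y\notin F^{\bot}$ and $F$ separates them; conversely, if $F$ is not dense the construction supplies $x',y'\in\Hom\left(E\right)\cap E^{*}$ (one of them possibly the zero homomorphism, which is a lattice homomorphism) with $x'\ne y'$ but $x'|_{F}=y'|_{F}$, so $F$ does not separate $\Hom\left(E\right)\cap E^{*}$.

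The main obstacle is the passage, in the second and third steps, from a generic Hahn--Banach separating functional to one that is a \emph{difference of lattice homomorphisms}: this is exactly what the AM-topology (and, via Proposition \ref{am}, the compactly generated hypothesis) buys us, by collapsing the non-membership of $e_{0}$ onto a genuine $\Co\left(L\right)$ with $L$ compact, where Kakutani--Stone exhibits the obstruction to lattice-membership through at most two point evaluations. The routine verifications deliberately suppressed above are that a finite maximum of AM-seminorms is an AM-seminorm, that $\|Q_{\rho}e\|_{E_{\rho}}=\rho\left(e\right)$, and that the closure of $G$ in $E_{\rho}$ coincides with its closure in $\Co\left(L\right)$.
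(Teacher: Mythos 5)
Your proof is correct, but it takes a genuinely different route from the paper's. The paper disposes of the nontrivial inclusion in two lines: it invokes Proposition \ref{am} to realize $E$ as a topological sublattice of $\Co\left(X\right)$ for some Tychonoff $X$, and then quotes \cite[Theorem 2.1]{BT:24}, which is exactly the statement of the corollary for sublattices of $\Co\left(X\right)$; the only remaining observation is that differences of continuous lattice homomorphisms on $\Co\left(X\right)$ restrict to differences of continuous lattice homomorphisms on $E$. You instead give a self-contained separation argument: localize $e_{0}\notin\overline{F}$ to a single AM-seminorm $\rho$ (legitimate, since AM-seminorms are stable under positive multiples and finite maxima, so their unit balls form a base at $0_{E}$), pass to the AM-space completion $E_{\rho}$ of $E\slash\ker\rho$, represent it inside a $\Co\left(L\right)$ with $L$ compact, and extract the separating functional from the Kakutani--Stone two-point criterion together with the classification of closed sublattices of $\R^{2}$. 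All the individual steps check out: the two-point criterion holds for arbitrary sublattices of $\Co\left(L\right)$, the list $\left\{0\right\}$, $\left\{y=ax\right\}$ with $a\ge 0$, the vertical axis, $\R^{2}$ exhausts the closed linear sublattices of $\R^{2}$, and in each admissible case the annihilating functional is a difference of positive multiples of point evaluations, hence pulls back along the continuous lattice homomorphism $E\to E_{\rho}\to\Co\left(L\right)$ to an element of $F^{\bot}\cap\left(\Hom\left(E\right)-\Hom\left(E\right)\right)$ not vanishing at $e_{0}$. Besides being independent of \cite{BT:24}, your argument has a real payoff: it uses only the ``quotient and complete'' half of the proof of Proposition \ref{am} and never the global embedding $J$, so it avoids the converse Arzela--Ascoli step entirely and therefore proves the corollary for an arbitrary Hausdorff AM-topology, with no compactly generated hypothesis. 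The paper's approach buys brevity at the cost of importing an external theorem and the extra topological assumption.
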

\begin{proof}
Let $H:=\left(F^{\bot}\cap \left(\Hom\left(E\right)-\Hom\left(E\right)\right)\right)_{\bot}$. It is clear that $\overline{F}=F^{\bot}_{~\bot}\subset H$. To prove the converse by Proposition \ref{am} we may assume that $E$ is a sublattice of $\Co\left(X\right)$, for a Tychonoff space $X$. Since continuous homomorphisms on $\Co\left(X\right)$ restrict to continuous homomorphisms on $E$, it follows that $H\subset \left(F^{\bot}\cap \left(\Hom\left(\Co\left(X\right)\right)-\Hom\left(\Co\left(X\right)\right)\right)\right)_{\bot}=\overline{F}$, where the last equality follows from \cite[Theorem 2.1]{BT:24}.
\end{proof}

\begin{corollary}\label{prop1}
Let $X$ be a hemi-compact space and let $E$ be an AM-space. Assume that $F$ is a sublattice of $\Co\left(X,E\right)$ which contains all the $E$-valued constant functions and $\left\{\mu \circ \mathbf{f}, \mathbf{f} \in F\right\}$ separates the points of $X$ for all $\mu \in \Hom\left(E\right)\backslash\left\{0_{E^{*}}\right\}$. Then, $F$ is dense in $\Co\left(X,E\right)$.
\end{corollary}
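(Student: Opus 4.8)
The plan is to apply Corollary~\ref{thm0} to the ambient space $G:=\Co\left(X,E\right)$ endowed with the compact-open topology, so the first task is to check that this topology is a Hausdorff, compactly generated AM-topology on the vector lattice $G$. Hausdorffness is immediate since singletons are compact, so the compact-open topology is finer than pointwise convergence, which already separates points. For compact generation, note that $X$ is hemi-compact and $E$ is a Banach (hence metric) space, so $G$ is metrizable; being a Hausdorff topological vector space it is Tychonoff, and being metrizable it is sequential, hence compactly generated. The crucial point is that the seminorms $\rho_{K}\left(\mathbf{f}\right):=\bigvee_{x\in K}\|\mathbf{f}\left(x\right)\|$ ($K\subset X$ compact) generating the topology are AM-seminorms: solidity holds because $E$ is a Banach lattice, and for $\mathbf{f},\mathbf{g}\in G_{+}$ the AM-space structure of $E$ gives $\|\mathbf{f}\left(x\right)\vee\mathbf{g}\left(x\right)\|=\|\mathbf{f}\left(x\right)\|\vee\|\mathbf{g}\left(x\right)\|$ pointwise, whence
\[
\rho_{K}\left(\mathbf{f}\vee\mathbf{g}\right)=\bigvee_{x\in K}\bigl(\|\mathbf{f}\left(x\right)\|\vee\|\mathbf{g}\left(x\right)\|\bigr)=\rho_{K}\left(\mathbf{f}\right)\vee\rho_{K}\left(\mathbf{g}\right).
\]

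Granting this, Corollary~\ref{thm0} reduces the assertion to showing that $F$ separates the points of $\Hom\left(G\right)\cap G^{*}$. I would then invoke Proposition~\ref{lem}: every non-zero $\nu\in\Hom\left(G\right)\cap G^{*}$ has the form $\nu\left(\mathbf{f}\right)=\mu\left(\mathbf{f}\left(x\right)\right)$ for a unique $x\in X$ and a unique $\mu\in\Hom\left(E\right)\setminus\{0\}$. Given distinct $\nu_{1},\nu_{2}\in\Hom\left(G\right)\cap G^{*}$, I may assume $\nu_{1}\neq 0$ and write $\nu_{1}\left(\mathbf{f}\right)=\mu_{1}\left(\mathbf{f}\left(x_{1}\right)\right)$ with $\mu_{1}\neq 0$; write $\nu_{2}\left(\mathbf{f}\right)=\mu_{2}\left(\mathbf{f}\left(x_{2}\right)\right)$ when $\nu_{2}\neq 0$, and set $\mu_{2}:=0$ otherwise. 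If $\mu_{1}\neq\mu_{2}$, a value $e\in E$ with $\mu_{1}\left(e\right)\neq\mu_{2}\left(e\right)$ makes the constant function $\1\otimes e\in F$ separate $\nu_{1}$ from $\nu_{2}$, since $\nu_{i}\left(\1\otimes e\right)=\mu_{i}\left(e\right)$. If $\mu_{1}=\mu_{2}=:\mu\neq 0$, then necessarily $x_{1}\neq x_{2}$, and the hypothesis that $\left\{\mu\circ\mathbf{f},~\mathbf{f}\in F\right\}$ separates the points of $X$ yields $\mathbf{f}\in F$ with $\nu_{1}\left(\mathbf{f}\right)=\mu\left(\mathbf{f}\left(x_{1}\right)\right)\neq\mu\left(\mathbf{f}\left(x_{2}\right)\right)=\nu_{2}\left(\mathbf{f}\right)$. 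Hence $F$ separates the points of $\Hom\left(G\right)\cap G^{*}$, and is therefore dense in $G$.

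I expect the main obstacle to be the (routine but genuinely load-bearing) verification that the compact-open topology on $\Co\left(X,E\right)$ is a compactly generated AM-topology, as this is precisely where the hypothesis that $E$ is an AM-space, rather than an arbitrary Banach lattice, is used and where Corollary~\ref{thm0} becomes applicable. One should also confirm that $G$ is a locally convex vector lattice under this topology and that $X$ may be treated as Tychonoff so that Proposition~\ref{lem} applies, but these are standard. Once these points are settled, the separation argument is the short case analysis above.
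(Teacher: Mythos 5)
Your proposal is correct and follows essentially the same route as the paper: reduce via Corollary~\ref{thm0} (after checking the compact-open topology is a metrizable, hence compactly generated, AM-topology) to separating the continuous lattice homomorphisms of $\Co\left(X,E\right)$, then use Proposition~\ref{lem} and the same two-case analysis on whether $\mu_{1}=\mu_{2}$. The only difference is that you spell out the AM-seminorm verification and the zero-homomorphism case, both of which the paper leaves implicit.
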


\begin{proof}
It is easy to see that the compact open topology on $\Co\left(X,E\right)$ is a metrizable AM-topology, thus in view of Corollary \ref{thm0} for topological density it suffices to show that $F$ separates the continuous lattice homomorphisms on $\Co\left(X,E\right)$. Let $\nu_{1},\nu_{2}$ be distinct homomorphisms of $\Co\left(X,E\right)$. According to Proposition \ref{lem} there exist $x_{1},x_{2}\in X$ and $\mu_{1},\mu_{2}\in \Hom\left(E\right)$ such that $\nu_{i}\left(\mathbf{f}\right)=\mu_{i}\left(\mathbf{f}\left(x_{i}\right)\right)$ for all $\mathbf{f} \in \Co\left(X,E\right)$ and each $i=1,2$. Suppose that $\mu_{1} \ne \mu_{2}$. Pick $e \in E$ such that $\mu_{1}\left(e\right) \ne \mu_{2}\left(e\right)$ and note that $\1\otimes e\in F$. We have $$\nu_{1}\left(\1\otimes e\right)=\mu_{1}\left(e\right)\ne \mu_{2}\left(e\right)=\nu_{2}\left(\1\otimes e\right).$$

Suppose that $\mu_{1} = \mu_{2}=:\mu$, which implies $\mu\ne 0_{E^{*}}$ and $x_{1}\ne x_{2}$. By our assumption there is  $\mathbf{f} \in F$ such $$\nu_{1}\left(\mathbf{f}\right)=\mu\left(\mathbf{f}\left(x_{1}\right)\right) \ne \mu\left(\mathbf{f}\left(x_{2}\right)\right)=\nu_{2}\left(\mathbf{f}\right).$$ Thus $F$ separates the continuous homomorphisms and the result follows from Corollary \ref{thm0}. Relative uniform density now follows from \cite[Theorem 15.6]{zaanen}.
\end{proof}

\begin{remark}
The assumption that $E$ is an AM-space in Corollary \ref{prop1} is not superfluous. If $E$ is atomless order continuous Banach lattice, then $\Hom\left(E\right)=\left\{0_{E^{*}}\right\}$ (see e.g. \cite[Corollary 5.22]{BT:24}), and so the second assumption about $F$ in the corollary is trivial. Hence, the set $F=\left\{\1\otimes e,~ e\in E\right\}$ satisfies both of the assumptions, but is not dense aside of the case when $X$ is a singleton.
\qed\end{remark}

\begin{remark}\label{timur}
It follows from Corollary \ref{thm0} that for AM-spaces the sets in \cite[Theorem 5.9]{BT:24} are equal to the norm closure (which is the same as relative uniform closure) of a sublattice. Another class of spaces for which this happens is that of atomic order continuous Banach lattices (see \cite[Section 3]{BT:24}). We will now construct a closed proper ideal $F$ of a Banach lattice $E$ such that $\Hom\left(E\right)$ separates the points of $E$, but $F^{\bot}\cap \left(\Hom\left(E\right)-\Hom\left(E\right)\right)=\left\{0_{E^{*}}\right\}$, thus providing a counterexample to \cite[Question 5.10]{BT:24}.

Let $H$ be a Banach lattice with $\Hom\left(H\right)=\left\{0_{H^{*}}\right\}$ (see e.g. the preceding remark). Let $E:=\mathrm{FBL}\left[H\right]$ (see \cite{art}). Note that $E$ is contained in the space of positively homogeneous functions on $H^{*}$, and contains $H$ viewed as a space of functions on $H^{*}$. Moreover, according to \cite[Corollary 2.7]{art} lattice homomorphisms on $E$ are precisely evaluations at the points of $H^{*}$. Let $F\subset E$ be as in \cite[Proposition 3.8]{dhp}, and the discussion after it; this is a closed proper ideal of $E$ such that no element of $\Hom\left(E\right)$ vanishes on $F$. Assume that $\nu,\mu\in H^{*}$ are distinct. Our goal is to show that there is $f\in F$ such that $f\left(\nu\right)\ne f\left(\mu\right)$.

As $\nu\ne\mu$ we may assume that $\nu\ne 0_{H^{*}}$. Since no point evaluation vanishes on $F$, there is $f\in F_{+}$ such that $f\left(\nu\right)=1$. If $\mu=r\nu$, where $1\ne r\ge 0$, then $f\left(\mu\right)=re\left(\mu\right)=r\ne 1=f\left(\nu\right)$. If $\mu$ is not a positive multiple of $\nu$ there is $h\in H$ such that $\nu\left(h\right)=1$ and $\mu\left(h\right)\le 0$. Let $g:=\left(f-h^{+}\right)^{+}\in F$; we have that $f\left(\mu\right)=g\left(\mu\right)$ and $f\left(\nu\right)=1\ne 0=g\left(\nu\right)$. Hence, either $f\left(\nu\right)\ne f\left(\mu\right)$, or $g\left(\nu\right)\ne g\left(\mu\right)$.
\qed\end{remark}

\subsection*{Acknowledgements}

The authors would like to thank Vladimir Troitsky for valuable conversations on the topic of the paper, Karim Boulabiar for bringing Corollary \ref{sw} to the authors' attention and Timur Oikhberg for assistance with Remark \ref{timur}, and in particular bringing \cite[Proposition 3.8]{dhp} to the authors' attention. The first author was  supported by Pacific Institute for the Mathematical Sciences. The second author was supported by an NSERC discovery grant.

\begin{bibsection}
\begin{biblist}

\bib{AT:17}{article}{
   author={Adeeb,Samer},
   author={Troitsky, Vladimir G.},
   title={Locally piecewise affine functions and their order structure},
   journal={Positivity},
   volume={21},
   date={2017},
   number={1},
   pages={213--221},
}

\bib{AT:07}{book}{
   author={Aliprantis, Charalambos D.},
   author={Tourky, Rabee},
   title={Cones and duality},
   series={Graduate Studies in Mathematics},
   volume={84},
   publisher={American Mathematical Society, Providence, RI},
   date={2007},
   pages={x+382},
}

\bib{AL:06}{book}{
   author={Aliprantis, Charalambos D.},
   author={Burkinshaw, Owen},
   title={Positive operators},
   note={Reprint of the 1985 original},
   publisher={Springer, Dordrecht},
   date={2006},
   pages={xx+376},
}

\bib{art}{article}{
   author={Avil\'es, Antonio},
   author={Rodr\'iguez, Jos\'e},
   author={Tradacete, Pedro},
   title={The free Banach lattice generated by a Banach space},
   journal={J. Funct. Anal.},
   volume={274},
   date={2018},
   number={10},
   pages={2955--2977},
}

\bib{bn}{book}{
   author={Beckenstein, Edward},
   author={Narici, Lawrence},
   title={Topological vector spaces},
   series={Pure and Applied Mathematics (Boca Raton)},
   volume={296},
   edition={2},
   publisher={CRC Press, Boca Raton, FL},
   date={2011},
   pages={xviii+610},
}

\bib{BDG23}{article}{
  title        = {Neural Networks in Fr\'echet Spaces},
  author       = {Benth, Fred Espen},
  author       = {Detering, Nils},
  author       = {Galimberti, Luca},
  journal      = {Annals of Mathematics and Artificial Intelligence},
  volume       = {91},
  number       = {1},
  pages        = {75--103},
  year         = {2023},
  publisher    = {Springer},
}

\bib{erz2}{article}{
   author={Bilokopytov, Eugene},
   title={Characterizations of the projection bands and some order
   properties of the lattices of continuous functions},
   journal={Positivity},
   volume={28},
   date={2024},
   number={3},
   pages={Paper No. 35, 21},
}

\bib{BT:24}{article}{
   author={Bilokopytov, Eugene},
   author={Troitsky, Vladimir G.},
   title={Uniformly closed sublattices of finite codimension},
   journal={Linear Multilinear Algebra},
   volume={72},
   date={2024},
   number={13},
   pages={2248--2266},
}

\bib{bt}{article}{
   author={Bilokopytov, Eugene},
   author={Troitsky, Vladimir G.},
   title={Relative uniform completion of a vector lattice},
   journal={preprint},
   eprint={arXiv:2601.09015},
   date={2026},
}

\bib{bs}{article}{
   author={Bochnak, Jacek},
   author={Siciak, J\'ozef},
   title={Polynomials and multilinear mappings in topological vector spaces},
   journal={Studia Math.},
   volume={39},
   date={1971},
   pages={59--76},
}

\bib{bhs}{article}{
   author={Briem, Eggert},
   author={Hatori, Osamu},
   author={Sidney, Stuart J.},
   title={A Stone-Weierstrass theorem for Banach function spaces satisfying
   a certain separation property},
   journal={J. Math. Anal. Appl.},
   volume={353},
   date={2009},
   number={2},
   pages={607--613},
}

\bib{bhp}{article}{
   author={Brown, D. J.},
   author={Huijsmans, C. B.},
   author={de Pagter, B.},
   title={Approximating derivative securities in $f$-algebras},
   conference={
      title={Positive operators, Riesz spaces, and economics},
      address={Pasadena, CA},
      date={1990},
   },
   book={
      publisher={Springer, Berlin},
   },
   date={1991},
   pages={171--177},
}

\bib{DMAZ:21}{article}{
   author={Dantas, Sheldon},
   author={Mart\'inez-Cervantes, Gonzalo},
   author={Rodr\'iguez Abell\'an, Jos\'e David},
   author={Rueda Zoca, Abraham},
   title={Norm-attaining lattice homomorphisms},
   journal={Rev. Mat. Iberoam.},
   volume={38},
   date={2022},
   number={3},
   pages={981--1002},
}

\bib{dhp}{article}{
   author={de Hevia, David},
   author={Tradacete, Pedro},
   title={Complemented subspaces of Banach lattices},
   journal={Banach J. Math. Anal.},
   volume={19},
   date={2025},
   number={4},
   pages={Paper No. 60, 33},
}

\bib{dw}{article}{
   author={Drewnowski, Lech},
   author={Wnuk, Witold},
   title={On finitely generated vector sublattices},
   journal={Studia Math.},
   volume={245},
   date={2019},
   number={2},
   pages={129--167},
}

\bib{edwards}{book}{
   author={Edwards, R. E.},
   title={Functional analysis. Theory and applications},
   publisher={Holt, Rinehart and Winston, New York-Toronto-London},
   date={1965},
   pages={xiii+781},
}

\bib{engelking}{book}{
   author={Engelking, Ryszard},
   title={General topology},
   series={Sigma Series in Pure Mathematics, 6},
   publisher={Heldermann Verlag},
   place={Berlin},
   date={1989},
   pages={viii+529},
}

\bib{folland}{book}{
   author={Folland, Gerald B.},
   title={Real analysis},
   series={Pure and Applied Mathematics (New York)},
   edition={2},
   note={Modern techniques and their applications;
   A Wiley-Interscience Publication},
   publisher={John Wiley \& Sons, Inc., New York},
   date={1999},
   pages={xvi+386},
}

\bib{gl}{article}{
   author={Gao, Niushan},
   author={Leung, Denny H.},
   title={Smallest order closed sublattices and option spanning},
   journal={Proc. Amer. Math. Soc.},
   volume={146},
   date={2018},
   number={2},
   pages={705--716},
}

\bib{GX:17}{article}{
   author={Gao, Niushan},
   author={Xanthos, Foivos},
   title={Option spanning beyond {$L_p$}-models},
   journal={Mathematics and Financial Economics},
   volume={11},
   date={2017},
   number={3},
   pages={383--391},
}

\bib{hj}{book}{
   author={H\'ajek, Petr},
   author={Johanis, Michal},
   title={Smooth analysis in Banach spaces},
   series={De Gruyter Series in Nonlinear Analysis and Applications},
   volume={19},
   publisher={De Gruyter, Berlin},
   date={2014},
   pages={xvi+497},
}

\bib{huijsmans}{article}{
   author={Huijsmans, Charles B.},
   title={Finitely generated vector sublattices},
   conference={
      title={Functional analysis and economic theory},
      address={Samos},
      date={1996},
   },
   book={
      publisher={Springer, Berlin},
   },
   isbn={3-540-64495-4},
   date={1998},
   pages={79--95},
}

\bib{ismailov}{article}{
   author={Ismailov, Vugar E.},
   title={Universal approximation theorem for neural networks with inputs from a topological vector space},
   journal={Information Processing Letters},
   volume={193},
   date={2026},
   pages={Paper No. 106623},
}

\bib{kv}{book}{
   author={Kalauch, Anke},
   author={van Gaans, Onno},
   title={Pre-Riesz spaces},
   series={De Gruyter Expositions in Mathematics},
   volume={66},
   publisher={De Gruyter, Berlin},
   date={2019},
   pages={xiii+301},
}

\bib{L03}{book}{
   author={Lin, Pei-Kee},
   title={K\"{o}the-Bochner function spaces},
   publisher={Birkh\"{a}user Boston, Inc., Boston, MA},
   date={2004},
   pages={xiv+370},
}

\bib{lz}{book}{
   author={Luxemburg, W. A. J.},
   author={Zaanen, A. C.},
   title={Riesz spaces. Vol. I},
   publisher={North-Holland Publishing Co., Amsterdam-London; American
   Elsevier Publishing Co., New York},
   date={1971},
   pages={xi+514},
}

\bib{pinkus}{article}{
   author={Pinkus, Allan},
   title={Approximation theory of the MLP model in neural networks},
   conference={
      title={Acta numerica},
   },
   book={
      series={Acta Numer.},
      volume={8},
      publisher={Cambridge Univ. Press, Cambridge},
   },
   date={1999},
   pages={143--195},
}

\bib{pink}{article}{
   author={Pinkus, Allan},
   title={Density in approximation theory},
   journal={Surv. Approx. Theory},
   volume={1},
   date={2005},
   pages={1--45},
}

\bib{schwartz}{article}{
   author={Schwartz, Laurent},
   title={Sur certaines familles non fondamentales de fonctions continues},
   language={French},
   journal={Bull. Soc. Math. France},
   volume={72},
   date={1944},
   pages={141--145},
}

\bib{sw}{article}{
   author={Sternfeld, Yaki},
   author={Weit, Yitzhak},
   title={An approximation theorem for vector valued functions},
   conference={
      title={Geometric aspects of functional analysis (1987--88)},
   },
   book={
      series={Lecture Notes in Math.},
      volume={1376},
      publisher={Springer, Berlin},
   },
   date={1989},
   pages={126--137},
}

\bib{zaanen}{book}{
   author={Zaanen, Adriaan C.},
   title={Introduction to operator theory in Riesz spaces},
   publisher={Springer-Verlag, Berlin},
   date={1997},
   pages={xii+312},
}

\end{biblist}
\end{bibsection}

\end{document}